\title{Orthogonal Constrained Minimization with Tensor \(\ell_{2,\MakeLowercase{p}}\) Regularization for HSI Denoising and Destriping
\thanks{Submitted to the editors 3 March 2025; revised version 27 June, 2025.
\funding{This work was partially supported by the Hong Kong Research Council Grant C5036-21E, and N-PolyU507/22,  the PolyU internal grant P0040271, and the National Natural Science Foundation of China under grants U21A20455 and 12326619.}}}
\author{Xiaoxia Liu\thanks{School of Mathematics, South China University of Technology, Guangzhou, Guangdong, 510641, China (\email{xiaoxia\_liu\_math@outlook.com}).} \and
	Shijie Yu\footnotemark[5]~\thanks{Shenzhen Key Laboratory of Advanced Machine Learning and Applications, School of Mathematical Sciences, Shenzhen University, Shenzhen 518060, China.} \and 
	Jian Lu\footnotemark[3]~\thanks{National Center for Applied Mathematics Shenzhen, Shenzhen 518055, China (\email{jianlu@szu.edu.cn}).}\and Xiaojun Chen\thanks{Department of Applied Mathematics, The Hong Kong Polytechnic University, Hong Kong, China (\email{shi-jie.yu@connect.polyu.hk} and \email{xiaojun.chen@polyu.edu.hk}).} 
	 }
\begin{document}
\maketitle

\begin{abstract}
Hyperspectral images~(HSIs) are often contaminated by a mixture of noise such as Gaussian noise, dead lines, stripes, and so on. In this paper, we propose a multi-scale low-rank tensor regularized $\ell_{2,p}$ (MLTL2p) approach for HSI denoising and destriping, which consists of an orthogonal constrained minimization model and an iterative algorithm with convergence guarantees. The model of the proposed MLTL2p approach is built based on a new sparsity-enhanced Multi-scale Low-rank Tensor regularization and a tensor $\ell_{2,p}$ norm with \(p\in (0,1)\). The multi-scale low-rank regularization for HSI denoising utilizes the global and local spectral correlation as well as the spatial nonlocal self-similarity priors of HSIs. The corresponding low-rank constraints are formulated based on independent higher-order singular value decomposition with sparsity enhancement on its core tensor to prompt more low-rankness. The tensor $\ell_{2,p}$ norm for HSI destriping is extended from the matrix $\ell_{2,p}$ norm. A proximal block coordinate descent algorithm is proposed in the MLTL2p approach to solve the resulting nonconvex nonsmooth minimization with orthogonal constraints. We show any accumulation point of the sequence generated by the proposed algorithm converges to a first-order stationary point, which is defined using three equalities of substationarity, symmetry, and feasibility for orthogonal constraints. In the numerical experiments, we compare the proposed method with state-of-the-art methods including a deep learning based method, and test the methods on both simulated and real HSI datasets. Our proposed MLTL2p method demonstrates outperformance in terms of metrics such as mean peak signal-to-noise ratio as well as visual quality.
\end{abstract}

\begin{keywords}
Hyperspectral images, tensor $\ell_{2,p}$ norm, low-rank tensor regularization, Stiefel manifold
\end{keywords}

\begin{MSCcodes}
68U10, 90C26, 15A18, 65F22
\end{MSCcodes}

\section{Introduction}\label{sec:intro}
Hyperspectral images~(HSIs) are collected by hyperspectral sensors across the electromagnetic spectrum. For a three-dimensional (3-D) HSI, the first two dimensions represent spatial information, and the third dimension represents the spectral information of a scene. An illustration of an HSI is shown in Fig. \ref{A Hyperspectral Image}. HSIs are widely used for various applications~\cite{prevost2022hyperspectral,rajwade2013coded,song2019online,zheng2024scale} such as object detection~\cite{yu2024generalized}, material identification~\cite{chan2020two,chang2003hyperspectral,grahn2007techniques}, etc.
\begin{figure}[h]
\centering
\includegraphics[width=8cm]{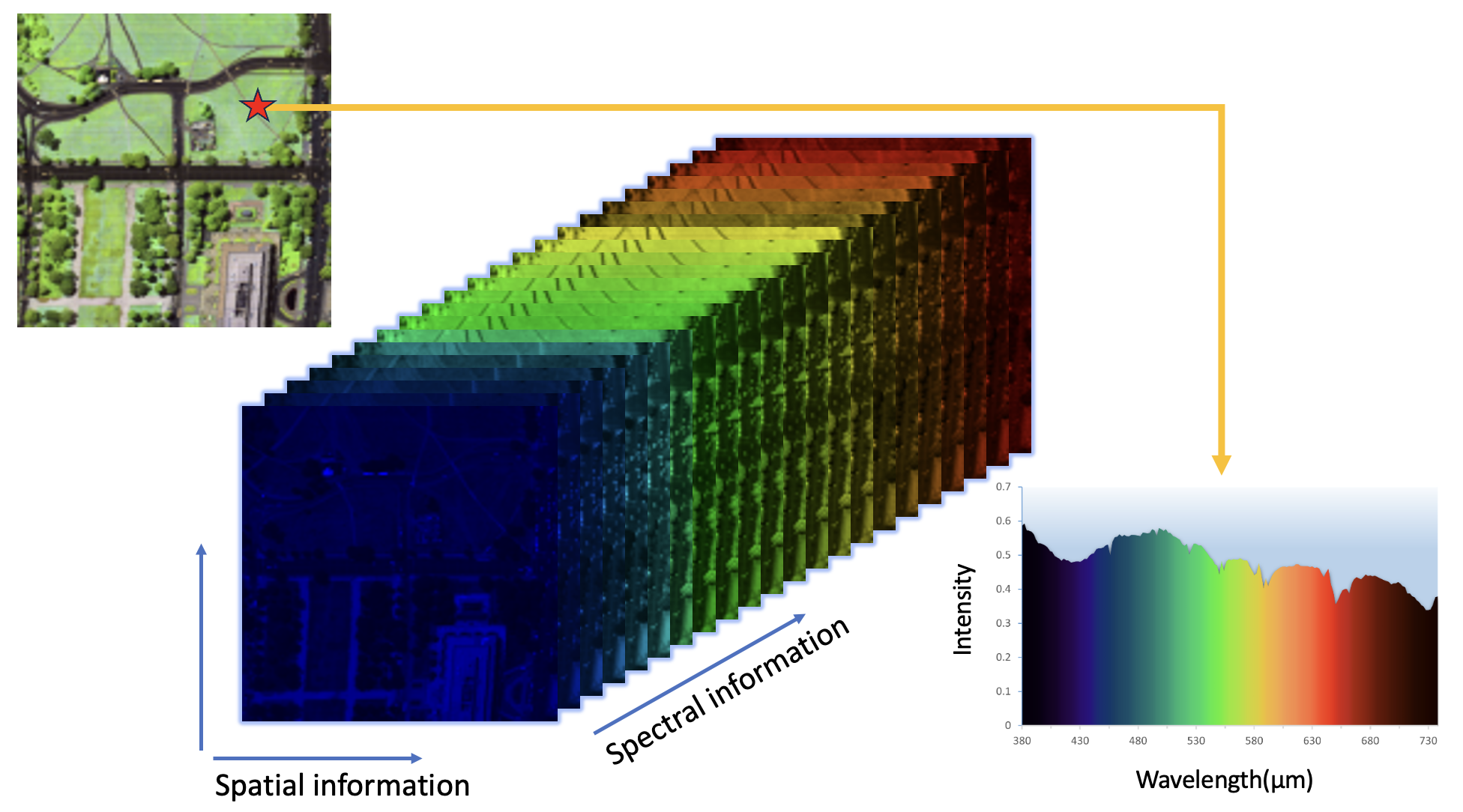}
\caption{An illustrative figure of an HSI.}
\label{A Hyperspectral Image}
\end{figure}

HSIs are often contaminated by additive white Gaussian noise, impulse noise, stripe noise, dead line noise, and so on. Mathematically, a noisy HSI $\mathcal{D}\in \mathbb{R}^{I_{1}\times I_{2}\times I_{3}}$ can be expressed as
\begin{displaymath}\label{1.2}
    \mathcal{D}=\mathcal{L}+\mathcal{S}+\mathcal{N},
\end{displaymath}
where $\mathcal{L}\in \mathbb{R}^{I_{1}\times I_{2}\times I_{3}}$ represents the clean HSI, $\mathcal{S}\in \mathbb{R}^{I_{1}\times I_{2}\times I_{3}}$ represents the sparse noise such as impulse noise, stripe noise and dead line noise, and $\mathcal{N}\in \mathbb{R}^{I_{1}\times I_{2}\times I_{3}}$ represents  additive white Gaussian noise.

To remove  additive white Gaussian noise in HSIs, many methods have been proposed. Conventional 2-D methods~\cite{dabov2007image,elad2006image}, processing HSIs band by band, do not fully utilize the strong correlation between adjacent bands. 3-D methods such as Block-matching 4-D filtering~(BM4D)~\cite{maggioni2012nonlocal}, spectral-spatial adaptive hyperspectral total variation~(SSAHTV) model~\cite{6165657}, and sparse representation methods~\cite{xing2012dictionary,zhao2014hyperspectral} incorporate both spatial and spectral information and outperform conventional methods. However, those methods may need additional preprocessing to remove non-Gaussian noise.

In real-world scenarios, HSIs are often contaminated by more than one type of noise due to atmospheric effects and instrument noise.  Various methods have been proposed to remove the mixed noise, including low-rank matrix based methods, low-rank tensor based methods, and deep learning based methods. Low-rank matrix based methods~\cite{guo2022gaussian,su2023fast,zhang2021double} reshape an HSI into a matrix, e.g., each spectral band of the HSI is reshaped as each row of the matrix, and impose low-rankness on the reshaped HSI. Zhang et al.~\cite{zhang2013hyperspectral} formulated the HSI denoising problem as a low-rank matrix factorization problem and solved it by the ``Go Decomposition" algorithm; Zhang et al.~\cite{zhang2021double} proposed a double low-rank matrix decomposition which utilized the $\ell_1$ norm for the impulse noise and the matrix nuclear norm for stripes, and adopted augmented Lagrangian method (ALM) to solve the model; Yang et al.~\cite{yang2021hyperspectral} also used double low-rankness, but added spatial-spectral total variation~(SSTV) to the model and performed the decomposition on full band blocks~(FBBs) rather than the entire HSI. However, imposing low-rankness on matrices can only exploit redundancy in two dimensions and may potentially lose important structural information due to reshaping.

Low-rank tensor based methods~\cite{de2017low,gao2023tensor,li2023learnable,pan2021orthogonal,zheng2024scale} view HSIs as tensors, and can exploit redundancy in three or more dimensions. Tensor low-rank decompositions can preserve the spatial-spectral correlations and even nonlocal self-similarity if block matching is involved. Wang et al.~\cite{wang2017hyperspectral} used Tucker tensor decomposition and an anisotropic SSTV regularization to characterize the piece-wise smooth structures of the HSI; Chen et al.~~\cite{chen2018destriping} proposed a low-rank tensor decomposition~(LRTD) method, which utilized the higher-order singular value decomposition~(HOSVD) for low-rankness and the matrix $\ell_{2,1}$ norm for characterizing the stripes, and adopted ALM for solving the optimization model. Cao et al.~\cite{cao2019hyperspectral} employed the fact that spectral signatures of pixels lie in a low-dimensional subspace, and proposed a subspace-based nonlocal low-rank and sparse factorization (SNLRSF) method for removing mixed noise in HSI, which conducted nonlocal low-rank factorization via successive singular value decomposition (SVD). Xiong et al. \cite{xiong2019hyperspectral} proposed the LRTFL0 method using low-rank block term decomposition and spectral-spatial $\ell_0$ gradient regularization to achieve gradient smoothness.

For removing Gaussian noise, the existing low-rank based methods mainly utilize global spectral correlation of HSIs, and do not use its nonlocal self-similarity prior~\cite{buades2005non}, except for SNLRSF. This limits the performance of restoring clean HSIs. Also, many of those methods use ALM to solve their low-rank regularization based models without providing any convergence analysis. In their ALM, the augmented Lagrangian subproblem is solved by alternating minimization via one Gauss–Seidel-type iteration, instead of an exact joint minimization. As a result, the convergence is not guaranteed for these nonconvex nonsmooth optimization problems with three or more blocks \cite{chen2016direct}.

Recently, some deep neural networks~\cite{bradbury2016quasi,wei20203,yuan2018hyperspectral} have been proposed to denoise HSIs. Quasi-recurrent neural networks (QRNN)~\cite{bradbury2016quasi} combined recurrent neural networks (RNN) with convolutional neural networks~(CNN); the 3-D version of QRNN (QRNN3D)~\cite{wei20203}, applying quasi-recurrent pooling function along spectrum, can effectively embed both structural spatial-spectral correlation and global correlation along spectrum of HSIs;  CNN  can also be used as a denoiser in a plug-and-play fashion for HSI denoising~\cite{9664348}. However, the performance of these methods may vary significantly depending on factors such as the datasets used, noise characteristics, and the size of the input data.

To remove Gaussian noise and stripes simultaneously, we propose a new optimization model utilizing low-rank tensor regularization and a group sparsity measure, which is formulated as follows
\begin{equation}\label{model:HSI}
\begin{split}
\min_{\substack{\mathcal{S},[\boldsymbol{X}_1],[\boldsymbol{X}_2],\\
[\boldsymbol{X}_3],[\mathcal{G}],\mathcal{L}} } \quad
& \frac{1}{2} \|\mathcal{L}+\mathcal{S}-\mathcal{D}\|_{F}^2+\gamma \|\mathcal{S}\|^{p}_{2,p}+\|[\mathcal{G}]\|_{1,\boldsymbol{w}}\\
&+ \frac{\delta}{2}\|\mathscr{R}(\mathcal{L})- [\mathcal{G}]\times
_{1}[\boldsymbol{X}_1]\times_{2}[\boldsymbol{X}_2]\times_{3}[\boldsymbol{X}_3]\|^{2}_{F}\\
\mbox{s.t. } & [\boldsymbol{X}_i]^{\top}[\boldsymbol{X}_i]=[\boldsymbol{I}_{n_i}], \quad i=1,2,3,
\end{split}
\end{equation}
where
\begin{itemize}
    \item \(\mathcal{S},\mathcal{L}\in \mathbb{R}^{I_{1}\times I_{2}\times I_{3}}\), \([\mathcal{G}]\in\mathbb{R}^{n_1\times n_2\times n_3\times N}\) and \([\boldsymbol{X}_i]\in\mathbb{R}^{m_i\times n_i\times N}\), \(m_i\geq n_i\), \(i=1,2,3\);

    \item \(\|\mathcal{S} \|_{2,p}\) denotes the tensor $\ell_{2,p}$ (quasi-)norm with $p \in (0,1)$ of a third order tensor $\mathcal{S}$ defined by
    \begin{equation} \label{eq:L2pNorm}
\|\mathcal{S}\|_{2,p}=\left(\sum_{i_3=1}^{I_3}\sum_{i_2=1}^{I_2}\|\boldsymbol{s}_{:i_2i_3}\|^{p}_{2}\right)^{\frac{1}{p}}= \left(\sum_{i_3=1}^{I_3}\sum_{i_2=1}^{I_2}\left(\sum_{i_1=1}^{I_1}{s}_{i_1i_2 i_3}^{2}\right)^{\frac{p}{2}}\right)^{\frac{1}{p}},
    \end{equation}
    and the tensor $\ell_{2,p}$ norm is exactly equal to the matrix $\ell_{2,p}$ norm of the unfolding matrix of ${\mathcal{S}}$ along the first dimension, i.e., $\|{\mathcal{S}}\|_{2,p}=\|\boldsymbol{S}_{(1)}\|_{2,p}$;
 
     \item \(\mathscr{R}:\mathbb{R}^{I_{1}\times I_{2}\times I_{3}} \to \mathbb{R}^{m_{1}\times m_{2}\times m_{3}\times N}\) denotes the block extraction operator that extracts blocks and forms a fourth order tensor, for example, the nonlocal similar FBB tensor, which can be regularized to exhibit low-rank properties;
                    
    \item \([\mathcal{G}]\times_{1}[\boldsymbol{X}_1]\times_{2}[\boldsymbol{X}_2]\times_{3}[\boldsymbol{X}_3]\) denotes an \textit{independent 3-D HOSVD} (see the definition in section~\ref{sec:independentHOSVD}) with \([\mathcal{G}]\) being a stack of $N$ independent core tensors and  \([\boldsymbol{X}_i]\) being a stack of $N$ independent mode-$i$ factor matrices, \(i=1,2,3\),  and \([\boldsymbol{X}_i]\) is independently orthogonal, i.e., \([\boldsymbol{X}_i]^{\top}[\boldsymbol{X}_i]=[\boldsymbol{I}_{n_i}]\), with $\boldsymbol{I}_{n_i}$ representing the identity matrix of size $n_i\times n_i$;
    
     \item $\|[\mathcal{G}]\|_{1,\boldsymbol{w}}$ denotes the weighted tensor (component-wise) $\ell_1$ norm for a fourth order tensor \([\mathcal{G}]\) defined by
    \begin{equation}\label{eq:Weightedl1Norm}
    	\|[\mathcal{G}]\|_{1,\boldsymbol{w}}=\sum_{j=1}^N w_j\|[\mathcal{G}]^{(j)}\|_1,
    \end{equation}
    with $\boldsymbol{w}\in \mathbb{R}^N_{++}$ being a positive weight vector, and $[\mathcal{G}]^{(j)}$ being the $j$-th independent core tensor of \([\mathcal{G}]\).
\end{itemize}

Model~\eqref{model:HSI} is a nonconvex nonsmooth minimization problem with orthogonal constraints. In particular, the first term of model~\eqref{model:HSI} is a data fidelity term to remove Gaussian noise, the second term is a group sparsity measure to remove sparse noise with linear structures, and the last two terms are the sparsity-enhanced nonlocal low-rank tensor regularization terms. 

Our main contributions are summarized as follows:

\begin{itemize}
    \item We propose a novel model to remove mixed noise in HSIs using a new sparsity-enhanced low-rank regularization and a tensor $\ell_{2,p}$ norm with \(p\in (0,1)\). For removing Gaussian noise, a tensor formed via the extraction operator $\mathscr{R}$ is regularized on its low-rankness using independent 3-D HOSVD with sparsity enhancement on its core tensors to prompt more low-rankness. The tensor $\ell_{2,p}$ norm for removing the dead lines and stripes is extended from the matrix $\ell_{2,p}$ norm. Some new mathematical results on finding the proximal operator of the tensor $\ell_{2,p}$ norm are also presented. We show that  model~\eqref{model:HSI} has a nonempty and bounded solution set.

    \item We propose a proximal block coordinate descent~(P-BCD) algorithm for solving problem~\eqref{model:HSI}. Each subproblem of the P-BCD algorithm has an exact form solution, which either has a closed-form solution or is easy to compute. We define the first-order stationary point of model~\eqref{model:HSI} using three equalities of substationarity, symmetry, and feasibility for orthogonal constraints. We prove that any accumulation point of the sequence generated by P-BCD algorithm is a first-order stationary point.

    \item We propose a multi-scale low-rank tensor regularized $\ell_{2,p}$ (MLTL2p) approach with two phases for HSI denoising and destriping, which utilizes the global and local spectral correlation as well as the spatial nonlocal self-similarity priors of HSIs.
    We show the MLTL2p approach can outperform other state-of-the-art methods including a deep learning based method on the numerical experiments tested on both simulated and real HSI datasets.
\end{itemize}

The rest of this paper is organized as follows. In section~\ref{sec:notations}, we present some notations and preliminaries for tensors, nonconvex nonsmooth optimization, manifold optimization, and independent 3-D HOSVD. For solving model~\eqref{model:HSI}, we propose a P-BCD method in section~\ref{sec:algorithm} and conduct its convergence analysis in section~\ref{sec:convergence}. Next, we propose the MLTL2p approach for HSI denoising and destriping in section~\ref{sec:application} and conduct experiments on simulated and real HSI data in section~\ref{sec:experiments}. The concluding remarks are given in section~\ref{sec:conclusions}.

\section{Notations and Preliminaries}\label{sec:notations}

 In this section, we first present the notations for tensors, nonconvex nonsmooth optimization, and manifold optimization, then we introduce some notations and preliminaries for the independent 3-D HOSVD.

\subsection{Notations}

	Throughout this paper, tensors are denoted by calligraphic uppercase letters, e.g., $\mathcal{X}$, matrices by boldface uppercase letters, e.g., $\boldsymbol{X}$, vectors by boldface lowercase letters, e.g., $\boldsymbol{x}$, scalars or entries by lowercase letters, e.g., $x$, and operators by curly uppercase letters, e.g., $\mathscr{R}$.

First, we introduce the notations for tensors.
For a third order tensor $\mathcal{X} \in \mathbb{R}^{I_{1}\times I_{2}\times I_{3}}$, we let $x_{i_{1}i_{2}i_{3}}$ denote its $(i_{1},i_{2},i_{3})$-th entry,  let $\boldsymbol{x}_{:i_{2}i_{3}}$ denote its $(i_{2},i_{3})$-th mode-$1$ fiber and  let $\boldsymbol{X}_{::i_{3}}$ denote its $i_{3}$-th frontal slice. And $\mathcal{X} \in \mathbb{R}^{I_{1}\times I_{2}\times I_{3}}_{++}$ means all its entries are positive. The mode-$k$ unfolding of a third order tensor $\mathcal{X}$ is denoted as
$\boldsymbol{X}_{(k)} = {\rm unfold}_{(k)}(\mathcal{X})$, which is the process to linearize all indexes except index $k$. The dimensions of $\boldsymbol{X}_{(k)}$ are $I_{k}\times \prod^{3}_{j=1,j\ne k}I_{j}$. An element $x_{i_{1}i_2i_{3}}$ of $\mathcal{X}$ corresponds to the position of $(i_{k},j)$ in matrix $\boldsymbol{X}_{(k)}$, where $j=1+\sum^{3}_{l=1,l\ne k}(i_{l}-1)\prod^{l-1}_{m=1, m\ne k}I_{m}$. The inverse process of the mode-$k$ unfolding of a tensor $\mathcal{X}$ is denoted by $\mathcal{X} = {\rm fold}_{(k)}(\boldsymbol{X}_{(k)})$.  An illustration of the unfoldings is presented in Fig.~\ref{fig:unfolding}.
\begin{figure}[h]
    \centering
    \includegraphics[width=\linewidth]{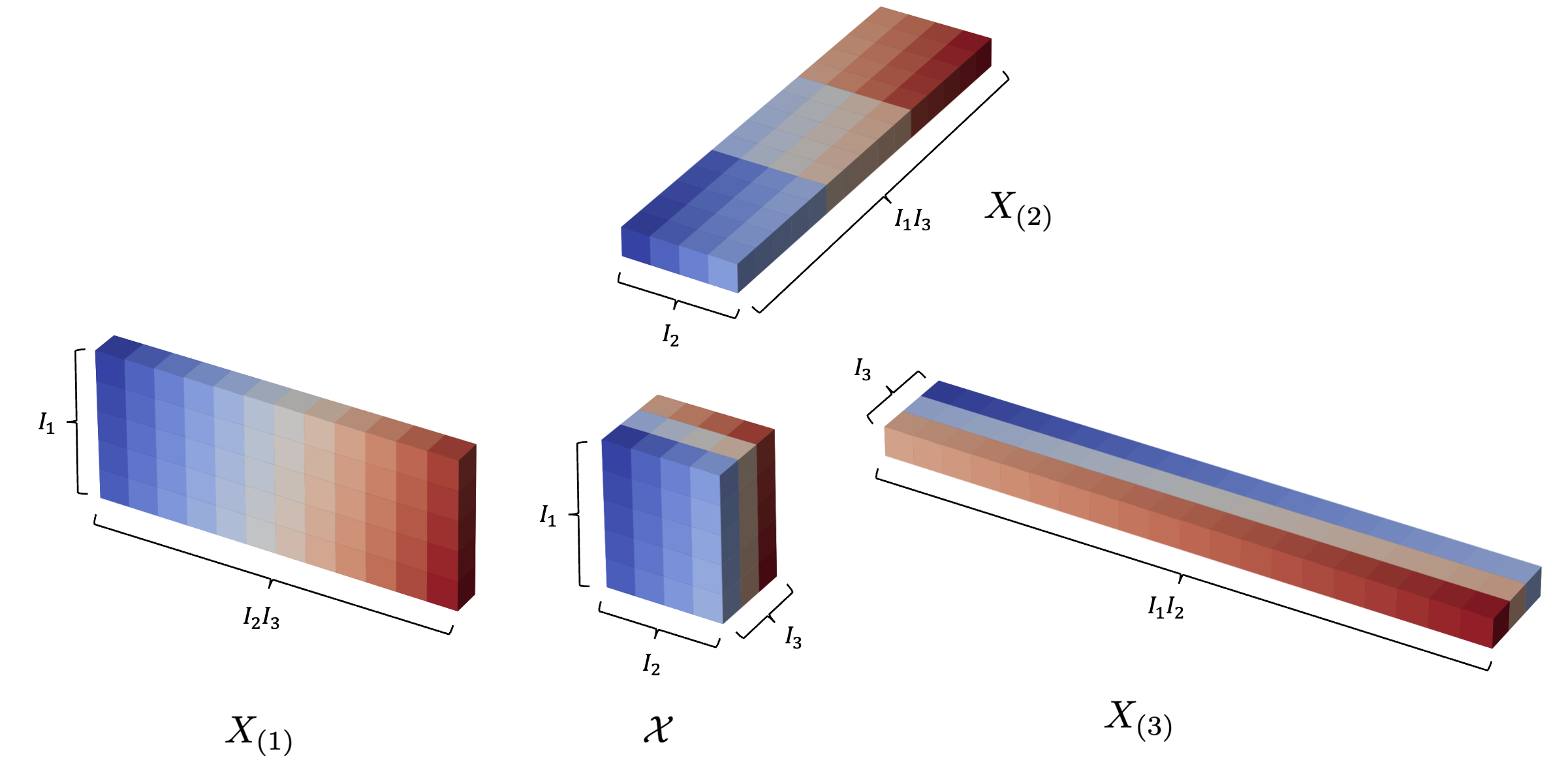}
    \caption{The unfoldings of tensor $\mathcal{X}\in\mathbb{R}^{I_1\times I_2\times I_3}$.}\label{fig:unfolding}
\end{figure}

The (component-wise) $\ell_1$ norm and Frobenius norm of $\mathcal{X}$ are given by
\begin{align*}
    \|\mathcal{X}\|_{1} :&=
\textstyle\sum_{i_{1}=1}^{I_{1}}\sum_{i_{2}=1}^{I_{2}}\sum_{i_{3}=1}^{I_{3}} |x_{i_{1}i_{2}i_{3}}|,
\\
\|\mathcal{X}\|_{F} :&= (\textstyle\sum_{i_{1}=1}^{I_{1}}\sum_{i_{2}=1}^{I_{2}}\sum_{i_{3}=1}^{I_{3}}  |x_{i_{1}i_{2}i_{3}}|^{2})^{\frac{1}{2}}.
\end{align*}

Next, we provide the definitions for (limiting) subdifferentials and proximal operators. Let $f:\mathbb{R}^d\to(-\infty,+\infty]$ be a proper and lower semicontinuous function with a finite lower bound. The (limiting) subdifferential of $f$ at $\boldsymbol{x}\in \operatorname{dom}f:=\{\boldsymbol{x}\in\mathbb{R}^{d}|f(\boldsymbol{x})<\infty\}$, denoted by $\partial f(\boldsymbol{x})$, is defined as
\begin{align*}
        \partial f(\boldsymbol{x})\!:=\{\boldsymbol{u} \in\mathbb{R}^{d}|&\;\exists \boldsymbol{x}^{k}\to \boldsymbol{x}, f(\boldsymbol{x}^k)\to f(\boldsymbol{x})\\
        &  \text{ and } \boldsymbol{u}^{k}\to \boldsymbol{u} \text{ with } \boldsymbol{u}^{k}\in \hat{\partial}f(\boldsymbol{x}^{k}) \mbox{ as }k\to \infty\},
\end{align*}
where $\hat{\partial}f(\boldsymbol{x})$ denotes the Fr\'echet subdifferential of $f$ at $\boldsymbol{x} \in \operatorname{dom}f$, which is the set
of all $\boldsymbol{u} \in \mathbb{R}^{d}$ satisfying
\begin{equation}
    \begin{split}
      \underset{\boldsymbol{y}\ne \boldsymbol{x},\boldsymbol{y}\rightarrow \boldsymbol{x}}{\lim\inf}\frac{f(\boldsymbol{y})-f(\boldsymbol{x})-\langle \boldsymbol{u},\boldsymbol{y}-\boldsymbol{x}\rangle}{\|\boldsymbol{y}-\boldsymbol{x}\|}\ge 0.
    \end{split}
\end{equation}
It follows from \cite{mordukhovich2006variational} that $\{\boldsymbol{u} \in\mathbb{R}^{d}|\exists \boldsymbol{x}^{k}\to \boldsymbol{x}, f(\boldsymbol{x}^k)\to f(\boldsymbol{x})  \text{ and } \boldsymbol{u}^{k}\to \boldsymbol{u} \text{ with } \boldsymbol{u}^{k}\in \partial f(\boldsymbol{x}^{k}) \mbox{ as }k\to \infty\} \subseteq \partial f(\boldsymbol{x})$. Next, the proximal operator of $f$ with parameter $\lambda>0$ evaluated at $\boldsymbol{x}\in \mathbb{R}^d$, denoted as $\text{prox}_{\lambda f}(\boldsymbol{x})$, is defined as
\[
\text{prox}_{\lambda f}(\boldsymbol{x}): = \underset{\boldsymbol{u}\in \mathbb{R}^d}{\arg\!\min} \left[f(\boldsymbol{u}) + \frac{1}{2\lambda} \|\boldsymbol{u} - \boldsymbol{x}\|_2^2\right].
\]
Note that $\text{prox}_{\lambda f}$ is a single-valued map, if $f$ is a convex function. However, when $f$ is nonconvex,  $\text{prox}_{\lambda f}(\boldsymbol{x})$ may have multiple points. 

Also, we set $\mathbb{S}_{m,n}:=\{\boldsymbol{X}\in\mathbb{R}^{m\times n}|\boldsymbol{X}^{\top}\boldsymbol{X}=\boldsymbol{I}_{n}\}$ as the Stiefel manifold with $ m\geq n$
and set $\mathcal{T}_{\boldsymbol{X}} \mathbb{S}_{m,n}:=\{\boldsymbol{Y}\in\mathbb{R}^{m\times n}| \boldsymbol{Y}^{\top}\boldsymbol{X}+\boldsymbol{X}^{\top}\boldsymbol{Y}=\boldsymbol{0}\}$
as the tangent space of Stiefel manifold at $\boldsymbol{X}\in\mathbb{R}^{m\times n}$. We also set the Riemannian metric on Stiefel manifold as the metric induced from the Euclidean inner product. Then according to \cite{2008Optimization}, the Riemannian gradient of  a smooth function $f$ at $\boldsymbol{X}$ is given by
\begin{displaymath}
    \operatorname{grad} f(\boldsymbol{X}) :=\operatorname{Proj}_{\mathcal{T}_{\boldsymbol{X}} \mathbb{S}_{m,n}}(\nabla f(\boldsymbol{X})),
\end{displaymath}
where $\operatorname{Proj}_{\mathcal{T}_{\boldsymbol{X}} \mathbb{S}_{m,n}}(\boldsymbol{Y}):= (\boldsymbol{I}_m  -  \boldsymbol{XX}^{\top} )\boldsymbol{Y} + \frac{1}{2} \boldsymbol{X}(\boldsymbol{X}^{\top} \boldsymbol{Y}  -  \boldsymbol{Y}^{\top} \boldsymbol{X})$.

\subsection{Independent 3-D HOSVD}\label{sec:independentHOSVD}

We introduce the definition of a 3-D HOSVD and then define an independent 3-D HOSVD using the notation $[\,\cdot\, ]$. For a third order tensor \(\mathcal{Y}\in\mathbb{R}^{m_{1}\times m_{2}\times m_{3}}\), the (truncated) 3-D HOSVD of \(\mathcal{Y}\) is to approximate $\mathcal{Y} $ in the following form
\begin{equation}\label{eq:3DHOSVD}
    \mathcal{Y} \approx\mathcal{G}\times_1 \boldsymbol{X}_1\times_2 \boldsymbol{X}_2\times_3 \boldsymbol{X}_3,
\end{equation}
where $\mathcal{G}\in \mathbb{R}^{n_1\times n_2\times n_3}$ is the core tensor, and  $\boldsymbol{X}_i\in\mathbb{R}^{m_{i}\times n_{i}}$ is the $i$-th factor matrix such that $\boldsymbol{X}_i^{\top}\boldsymbol{X}_i=\mathbf{I}_{n_i}$. Note that $ m_i\geq n_i$ and $\boldsymbol{X}_i$ belongs to a Stiefel manifold, i.e.,  $\boldsymbol{X}_i\in \mathbb{S}_{m_{i},n_{i}}$. By imposing orthogonality on the factor matrices, the decomposition in \eqref{eq:3DHOSVD} can inherit some nice properties from the matrix SVD. For example, the core tensor can have the all-orthogonality and the ordering property~\cite{Chen2009}.

When a fourth order tensor has little correlation across the last mode, we view the fourth order tensor as a stack of independent third order tensors. For example, for each nonlocal self-similar group after the block extraction operator $\mathscr{R}$ corresponding to different image pattern, different nonlocal self-similar groups with low correction/similarity can be processed independently. Using the notation $[\,\cdot\, ]$, we denote such a fourth order tensor as \([\mathcal{Y}]\in\mathbb{R}^{m_{1}\times m_{2}\times m_{3}\times N}\) and its $j$-th third order tensor as $[\mathcal{Y}]^{(j)}\in\mathbb{R}^{m_{1}\times m_{2}\times m_{3}}$, $j=1,2,\dots, N$, where $N$ denotes the number of independent third order tensors in the stack. Similarly, a stack of independent matrices is denoted as \([\boldsymbol{X}]\in\mathbb{R}^{m\times n\times N}\),  and its $j$-the matrix is $[\boldsymbol{X}]^{(j)}$. An illustration of the stacking processes of independent tensors is presented in Fig.~\ref{fig:independent}.
\begin{figure}[h]
	\centering
		\includegraphics[height=.25\linewidth]{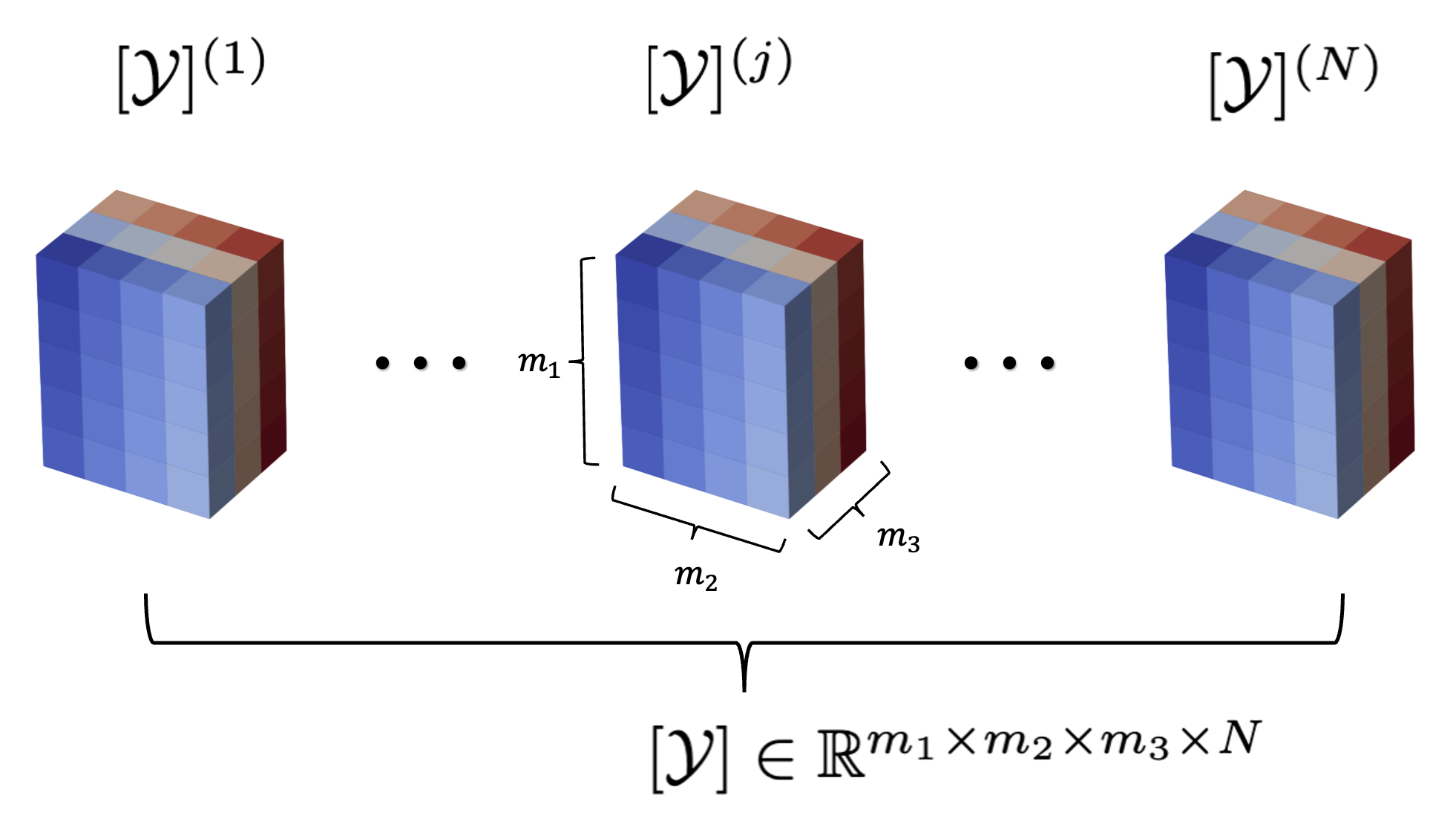}\quad\quad\quad \includegraphics[height=.25\linewidth]{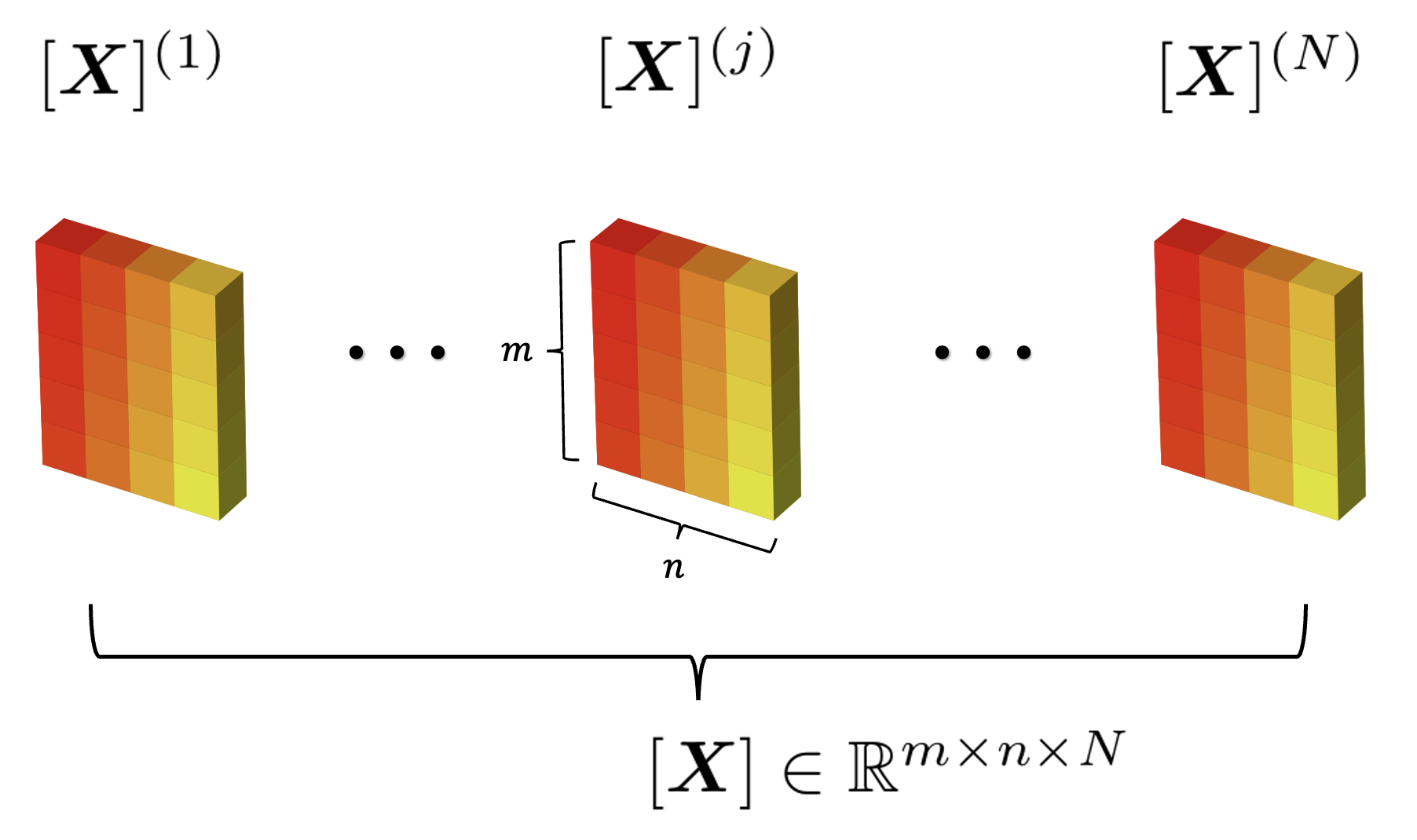}
	\caption{A stack of independent tensors.}
	\label{fig:independent}
\end{figure}

Next, we say \([\boldsymbol{X}]\) is independently orthogonal if \([\boldsymbol{X}]^{\top}[\boldsymbol{X}]=[\mathbf{I}_{n}],\;\) meaning
 \(([\boldsymbol{X}]^{(j)})^{\top}[\boldsymbol{X}]^{(j)}=\mathbf{I}_{n}\), or equivalently  $[\boldsymbol{X}]\in [\mathbb{S}_{m,n}]$, meaning $[\boldsymbol{X}]^{(j)}\in \mathbb{S}_{m,n}$. Then we define an independent (truncated) 3-D HOSVD of \( [\mathcal{Y}]\) as
\begin{equation}\label{eq:Indep3DHOSVD}
    [\mathcal{Y}] \approx[\mathcal{G}]\times_1 [\boldsymbol{X}_1]\times_2 [\boldsymbol{X}_2]\times_3 [\boldsymbol{X}_3],
\end{equation}
where  \([\mathcal{G}]\in\mathbb{R}^{n_1\times n_2\times n_3\times N}\),  \([\boldsymbol{X}_i]\in\mathbb{R}^{m_i\times n_i\times N}\), and
\([\mathcal{Y}]^{(j)} \approx[\mathcal{G}]^{(j)}\times_1 [\boldsymbol{X}_1]^{(j)}\times_2 [\boldsymbol{X}_2]^{(j)}\times_3 [\boldsymbol{X}_3]^{(j)}\) with \(([\boldsymbol{X}_i]^{(j)})^{\top}[\boldsymbol{X}_i]^{(j)}=\mathbf{I}_{n_i}\), \(i=1,2,3\), \(j=1,2,\dots,N\). Similarly, we extend the notation of $[\,\cdot\, ]$ to other operations acting on independent tensors. That is, performing an operation on independent tensors means performing the operation on each lower order tensor independently. For example, performing the mode-$k$ unfolding on \([\mathcal{G}]\), denoted as  \([\mathcal{G}]_{[(k)]}\), means independently performing \([\mathcal{G}]^{(j)}_{(k)}\) for each $j$, which is the mode-$k$ unfolding on \([\mathcal{G}]^{(j)}\).

The core tensor $\mathcal{G}$ in the 3-D HOSVD in \eqref{eq:3DHOSVD}, as well as the stack of independent core tensors $[\mathcal{G}]$ in \eqref{eq:Indep3DHOSVD}, play important roles in characterizing the low-rankness of the tensors $\mathcal{Y}$ and  $[\mathcal{Y}]$, respectively. Specifically, the core tensors and their entries reflect the level of interaction among the different components~\cite{kolda2009tensor}. Therefore, low-rankness can be promoted by imposing the sparsity on the core tensors.


\section{Proximal Block Coordinate Descent Algorithm}\label{sec:algorithm}

The proposed low-rank tensor model \eqref{model:HSI} is a nonconvex and nonsmooth optimization problem over Stiefel manifolds.  In this section, we propose a P-BCD algorithm for solving model \eqref{model:HSI}.

Let $\Phi$ denote the objective function of model \eqref{model:HSI} defined by
\begin{align*}
    \Phi(\mathcal{S},[\boldsymbol{X}_1],[\boldsymbol{X}_2],[\boldsymbol{X}_3],[\mathcal{G}],\mathcal{L}):= & \frac{1}{2} \|\mathcal{L}+\mathcal{S}-\mathcal{D}\|_{F}^2+\gamma \varphi\left(\mathcal{S}\right)+\psi([\mathcal{G}])\\
    &+ \delta H([\boldsymbol{X}_1],[\boldsymbol{X}_2],[\boldsymbol{X}_3],[\mathcal{G}],\mathcal{L}),
\end{align*}
where  $\varphi(\mathcal{S})=\|\mathcal{S}\|_{2,p}^p$, $\psi([\mathcal{G}])=\|[\mathcal{G}] \|_{1,\boldsymbol{w}}$, and
\begin{equation}\label{Eq:DefH}
 H([\boldsymbol{X}_1],[\boldsymbol{X}_2],[\boldsymbol{X}_3],[\mathcal{G}],\mathcal{L}):=\frac{1}{2}\|\mathscr{R}(\mathcal{L})- [\mathcal{G}]\times
_{1}[\boldsymbol{X}_1]\times_{2}[\boldsymbol{X}_2]\times_{3}[\boldsymbol{X}_3]\|^{2}_{F}.
\end{equation}
Then the P-BCD algorithm is summarized as follows:
\begin{align*}
 \mathcal{S}^{k+1}&\in \underset{\mathcal{S}}{\arg\!\min}\; \Phi(\mathcal{S},[\boldsymbol{X}_1^k],[\boldsymbol{X}_2^k],[\boldsymbol{X}_3^k],[\mathcal{G}^k],\mathcal{L}^k)+\frac{\alpha_{\mathcal{S}}}{2}\|\mathcal{S}-\mathcal{S}^{k}\|^{2}_{F},\\
[\boldsymbol{X}_1^{k+1}]&\in \underset{[\boldsymbol{X}_1]\in[\mathbb{S}_{m_1,n_1}]}{\arg\!\min}\!\Phi(\mathcal{S}^{k+1},[\boldsymbol{X}_1],[\boldsymbol{X}_2^k],[\boldsymbol{X}_3^k],[\mathcal{G}^k],\mathcal{L}^k)+\frac{\alpha_{\boldsymbol{X}}}{2}\|[\boldsymbol{X}_1]-[\boldsymbol{X}_1^{k}]\|^{2}_{F},\\
 [\boldsymbol{X}_2^{k+1}]&\in \underset{[\boldsymbol{X}_2]\in[\mathbb{S}_{m_2,n_2}]}{\arg\!\min}\!\Phi(\mathcal{S}^{k+1},[\boldsymbol{X}_1^{k+1}],[\boldsymbol{X}_2],[\boldsymbol{X}_3^k],[\mathcal{G}^k],\mathcal{L}^k)+\frac{\alpha_{\boldsymbol{X}}}{2}\|[\boldsymbol{X}_2]-[\boldsymbol{X}_2^{k}]\|^{2}_{F},\\
 [\boldsymbol{X}_3^{k+1}]&\in \underset{[\boldsymbol{X}_3]\in[\mathbb{S}_{m_3,n_3}]}{\arg\!\min}\!\Phi(\mathcal{S}^{k+1},[\boldsymbol{X}_1^{k+1}],[\boldsymbol{X}_2^{k+1}],[\boldsymbol{X}_3],[\mathcal{G}^k],\mathcal{L}^k)+\frac{\alpha_{\boldsymbol{X}}}{2}\|[\boldsymbol{X}_3]-[\boldsymbol{X}_3^{k}]\|^{2}_{F},\\
 [\mathcal{G}^{k+1}]&=  \underset{[\mathcal{G}]}{\arg\!\min}\;\Phi(\mathcal{S}^{k+1},[\boldsymbol{X}_1^{k+1}],[\boldsymbol{X}_2^{k+1}],[\boldsymbol{X}_3^{k+1}],[\mathcal{G}],\mathcal{L}^k)+\frac{\alpha_{\mathcal{G}}}{2}\|[\mathcal{G}]-[\mathcal{G}^{k}]\|^{2}_{F},\\
 \mathcal{L}^{k+1}&=  \underset{\mathcal{L}}{\arg\!\min}\;\Phi(\mathcal{S}^{k+1},[\boldsymbol{X}_1^{k+1}],[\boldsymbol{X}_2^{k+1}],[\boldsymbol{X}_3^{k+1}],[\mathcal{G}^{k+1}],\mathcal{L}),
\end{align*}
where parameters \(\alpha_{\mathcal{S}},\alpha_{\boldsymbol{X}},\alpha_{\mathcal{G}}>0\).

In the following, we present the details for computing each update. We will conduct a convergence analysis for the proposed P-BCD algorithm in the next section.

 \subsection{The update of $\mathcal{S}$}

We update $\mathcal{S}^{k+1}$ using the proximal operator of $\varphi$ as follows
\begin{align}
    \mathcal{S}^{k+1}\in &\underset{\mathcal{S}}{\arg\!\min}\; \tilde{\gamma}\varphi\left(\mathcal{S}\right)+ \frac{1}{2}\left\| \mathcal{S}-(\mathcal{S}^k-\tilde{\alpha}_{\mathcal{S}}\left(\mathcal{S}^k+\mathcal{L}^k- \mathcal{D})\right)\right\|_{F}^2,\nonumber\\
    = &\operatorname{prox}_{\tilde{\gamma}\varphi}\left( \mathcal{S}^k-\tilde{\alpha}_{\mathcal{S}}\left(\mathcal{S}^k+\mathcal{L}^k- \mathcal{D}\right)\right),\label{eq:Supdate}
    \end{align}
where $\tilde{\gamma}=\frac{\gamma}{1+\alpha_{\mathcal{S}}}$ and $\tilde{\alpha}_{\mathcal{S}}=\frac{1}{1+\alpha_{\mathcal{S}}}$.

\subsection{The update of $[\boldsymbol{X}_i]$}

Before we solve the optimization subproblem in terms of $[\boldsymbol{X}_i]$ over independent Stiefel manifolds, we can rewrite its objective function using the following useful fact for unfolding of tensors
\begin{displaymath}
	\mathcal{Y}=\mathcal{G}\times_{i} \boldsymbol{X} \mbox{ if and only if }\boldsymbol{Y}_{(i)}=\boldsymbol{X}\boldsymbol{G}_{(i)}.
\end{displaymath}
Then by applying $\boldsymbol{X}\in\mathbb{S}_{m,n}$,  the Frobenious norm of tensors can be rewritten into the Frobenious norm of matrices
\begin{align}
   \|\mathcal{G}\times_{i} \boldsymbol{X}-\mathcal{L}\|_F^2 =& \|\boldsymbol{X}\boldsymbol{G}_{(i)}-\boldsymbol{L}_{(i)}\|_F^2\nonumber\\
    =&\|\boldsymbol{G}_{(i)}\|_F^2-2\langle \boldsymbol{X}\boldsymbol{G}_{(i)},\boldsymbol{L}_{(i)}\rangle+\|\boldsymbol{L}_{(i)}\|_F^2,\label{eq:XG}\end{align}
where $\boldsymbol{G}_{(i)}$ and $\boldsymbol{L}_{(i)}$ denote the mode-$i$ unfolding of $\mathcal{G}$ and $\mathcal{L}$, respectively. Since $\langle \boldsymbol{X}\boldsymbol{G}_{(i)},\boldsymbol{L}_{(i)}\rangle=\langle \boldsymbol{X},\boldsymbol{L}_{(i)}\boldsymbol{G}_{(i)}^{\top}\rangle$ and $\|\boldsymbol{X}\|_F^2=n$, minimizing  $\|\mathcal{G}\times_{i} \boldsymbol{X}-\mathcal{L}\|_F^2$ over $X$ on the Stiefel manifold is equivalent to minimizing $\|\boldsymbol{X}-\boldsymbol{L}_{(i)}\boldsymbol{G}_{(i)}^{\top}\|_F^2$ over the Stiefel manifold. Hence, $[\boldsymbol{X}_i^{k+1}]$ can be computed via the  projection of unfolding matrices onto the Stiefel manifolds independently as follows
\begin{align}\label{eq:Xupdate}
[\boldsymbol{X}_i^{k+1}]\in\operatorname{Proj}_{[\mathbb{S}_{m_{i},n_{i}}]}\left( [\boldsymbol{X}_i^k]- \tilde{\alpha}_{\boldsymbol{X}}\left( [\boldsymbol{X}_i^k]-[\boldsymbol{P}_{i}^k][\boldsymbol{Q}_{i}^k]^{\top}\right)\right),
\end{align}
where
$[\boldsymbol{P}_{i}^k]=(\mathscr{R}(\mathcal{L}^{k}))_{[(i)]}$, $[\boldsymbol{Q}_{1}^k]= \left([\mathcal{G}^k]\times_2 [\boldsymbol{X}^k_2]\times_3 [\boldsymbol{X}^k_3]\right)_{[(1)]}$, $[\boldsymbol{Q}_{2}^k]= \big([\mathcal{G}^k]\times_1 [\boldsymbol{X}_1^{k+1}]$ $\times_3 [\boldsymbol{X}^k_3]\big)_{[(2)]}$, $[\boldsymbol{Q}_{3}^k]= \left([\mathcal{G}^k]\times_1 [\boldsymbol{X}^{k+1}_1]\times_2 [\boldsymbol{X}^{k+1}_2]\right)_{[(3)]}$,
and parameter $\tilde{\alpha}_{\boldsymbol{X}}=\frac{\delta}{\delta+\alpha_{\boldsymbol{X}}}$.

In the following, we present a lemma for finding the projection onto a Stiefel manifold, which is given in Theorem~4.1 in~\cite{high89n} and proved in~\cite{2012Projection}.
\begin{lemma}[\cite{high89n,2012Projection}]\label{Thm:Stiefel} Given $\boldsymbol{A}\in \mathbb{R}^{m\times n}$, $m\geq n$, consider the following Stiefel manifold projection problem
\begin{equation}\label{min:stiefelproj}
\begin{split}
&\min_{\boldsymbol{X}\in \mathbb{R}^{m\times n}} \|\boldsymbol{X}-\boldsymbol{A}\|^{2}_{F}\\
&{\rm s.t. } \quad \boldsymbol{X}^{\top}\boldsymbol{X}=\boldsymbol{I}_n.
\end{split}
\end{equation}
Then the set of optimal solutions of problem~\eqref{min:stiefelproj}, denoted as $\Omega^*(\boldsymbol{A})$, is given by
\begin{align*}
    \Omega^*(\boldsymbol{A})=\{\boldsymbol{U}\boldsymbol{V}^{\top}| &\boldsymbol{A}=\boldsymbol{U}\boldsymbol{\Sigma} \boldsymbol{V}^{\top}, \boldsymbol{U}\in \mathbb{R}^{m\times n}, \boldsymbol{\Sigma}\in\mathbb{R}^{n\times n}, \boldsymbol{V}\in\mathbb{R}^{n\times n}  \\&\text{ such that }
    \boldsymbol{U}^{\top}\boldsymbol{U}=\boldsymbol{V}^{\top}\boldsymbol{V}=\boldsymbol{I}_n \text{ and }\boldsymbol{\Sigma}=\operatorname{Diag}(\boldsymbol{\sigma}(A))\},
\end{align*}
where $\boldsymbol{U}\boldsymbol{\Sigma} \boldsymbol{V}^{\top}$ is a reduced SVD of $\boldsymbol{A}$ and $\boldsymbol{\sigma}(\boldsymbol{A})\in \mathbb{R}^{n}$ is a vector of all the singular values of $\boldsymbol{A}$.
In particular, if $\boldsymbol{A}$ is of full column rank $n$, then $\Omega^*(\boldsymbol{A})$ is a singleton.
\end{lemma}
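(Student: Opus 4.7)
The plan is to reduce the constrained Frobenius-norm minimization to a linear maximization over the Stiefel manifold. Expanding
\[
\|X-A\|_F^2 = \|X\|_F^2 - 2\langle X,A\rangle + \|A\|_F^2
\]
and using $\|X\|_F^2 = \operatorname{tr}(X^\top X) = n$ for every feasible $X$, problem~\eqref{min:stiefelproj} is equivalent to maximizing $\operatorname{tr}(X^\top A)$ subject to $X^\top X = \mathbf{I}_n$.

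To bound the reduced problem, I would invoke von Neumann's trace inequality, $\operatorname{tr}(X^\top A) \le \sum_{i=1}^n \sigma_i(X)\sigma_i(A)$. Because $X^\top X = \mathbf{I}_n$ forces every singular value of $X$ to equal one, the bound collapses to $\sum_i \sigma_i(A) = \operatorname{tr}(\Sigma)$. I would then exhibit an attaining element: for any reduced SVD $A = U\Sigma V^\top$, setting $X_\star = UV^\top$ gives $X_\star^\top X_\star = V U^\top U V^\top = \mathbf{I}_n$ and $\operatorname{tr}(X_\star^\top A) = \operatorname{tr}(V U^\top U\Sigma V^\top) = \operatorname{tr}(\Sigma)$, so $X_\star$ is optimal. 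Ranging over the family of reduced SVDs of $A$ then gives the inclusion $\{UV^\top\} \subseteq \Omega^*(A)$, while the reverse inclusion comes from unwinding the equality case of von Neumann's inequality to show that every maximizer can be written as $UV^\top$ for some admissible $(U,\Sigma,V)$.

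For the singleton claim, I would use the uniqueness of the polar decomposition. Writing $A = (UV^\top)(V\Sigma V^\top)$ exhibits $A$ as an orthogonal factor times the positive semidefinite square root $P = V\Sigma V^\top = (A^\top A)^{1/2}$, and $P$ is uniquely determined by $A$. When $A$ has full column rank $n$, $P$ is positive definite and hence invertible, forcing the orthogonal factor $UV^\top = AP^{-1}$ to be unique; thus $\Omega^*(A)$ collapses to a single matrix.

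The main obstacle I anticipate is the equality-case analysis in von Neumann's inequality. When $A$ has repeated singular values or rank less than $n$, the factors $U$ and $V$ in the SVD are not individually unique, and showing that every maximizer is realized by some valid $(U,V)$ pair requires a careful argument---essentially characterizing the matrices that simultaneously diagonalize with $\Sigma$ in the right way. An alternative, closer to the treatment in~\cite{2012Projection}, is to change variables via $Z = U^\top X V$ and analyze its diagonal versus off-diagonal entries directly, which sidesteps the need to quote the trace inequality and makes the characterization of $\Omega^*(A)$ more transparent.
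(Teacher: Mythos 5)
The paper does not prove this lemma at all: it is imported verbatim from Theorem~4.1 of \cite{high89n} with the proof attributed to \cite{2012Projection}, so there is no in-paper argument to compare yours against. Judged on its own, your outline is the standard and correct route: the reduction of $\|X-A\|_F^2$ to maximizing $\operatorname{tr}(X^\top A)$ over $X^\top X=\mathbf{I}_n$ is right, von Neumann's inequality gives the upper bound $\sum_i\sigma_i(A)$ because every singular value of a feasible $X$ equals one, $X_\star=UV^\top$ attains it, and the polar-factor argument $UV^\top=A(A^\top A)^{-1/2}$ correctly yields uniqueness in the full-column-rank case. The one place where your proposal is a plan rather than a proof is exactly the spot you flag: the reverse inclusion. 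For the record, it can be closed without the full equality-case machinery of von Neumann. Fix one reduced SVD $A=U_0\Sigma V_0^\top$ and set $W=U_0^\top X V_0$; then $\operatorname{tr}(X^\top A)=\sum_i W_{ii}\sigma_i(A)$ with $\|We_i\|_2\le 1$, so optimality forces $W_{ii}=1$, hence $We_i=e_i$, for every index with $\sigma_i(A)>0$. When $A$ has full column rank this gives $W=\mathbf{I}_n$ and, since each column of $X$ then satisfies $\|U_0^\top Xe_j\|_2=\|Xe_j\|_2$ and so lies in $\operatorname{range}(U_0)$, one gets $X=U_0V_0^\top$ directly. In the rank-deficient case the columns of $X$ paired with zero singular values are only constrained to be orthonormal and orthogonal to the others, and one must check that this residual freedom is exactly the freedom in choosing the corresponding columns of $U$ (and $V$) in a reduced SVD of $A$ --- that bookkeeping is the genuine content of the set equality as stated, and your proposal correctly identifies it as the step needing care, though it does not carry it out.
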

According to Lemma \ref{Thm:Stiefel}, problem~\eqref{min:stiefelproj} has a closed form solution, even though it may have multiple solutions when the given matrix does not have full column rank. Hence, if $[\boldsymbol{U}][\boldsymbol{\Sigma}] [\boldsymbol{V}]^{\top}$ is an independent reduced SVD of $[\boldsymbol{A}]\in\mathbb{R}^{m\times n\times N}$, then $[\boldsymbol{U}][\boldsymbol{V}]^{\top}\in \operatorname{Proj}_{[\mathbb{S}_{m,n}]}([\boldsymbol{A}])$.

\subsection{The update of $[\mathcal{G}]$}

The subproblem for updating $[\mathcal{G}]$ can be reformulated by using the following property that for any $\boldsymbol{X}\in \mathbb{S}_{m,n}$, $m\geq n$,
\begin{align*}
   &\|\mathcal{G}\times_{i} \boldsymbol{X}-\mathcal{L}\|_F^2\\
   =&\|\boldsymbol{G}_{(i)}-\boldsymbol{X}^{\top}L_{(i)}\|_F^2-\|\boldsymbol{X}^{\top}\boldsymbol{L}_{(i)}\|_F^2+\|\boldsymbol{L}_{(i)}\|_F^2\\
   =&\|\mathcal{G}-\mathcal{L}\times_i \boldsymbol{X}^{\top}\|_F^2-\|\mathcal{L}\times_i \boldsymbol{X}^{\top}\|_F^2+\|\mathcal{L}\|_F^2,
    \end{align*}
which is derived from \eqref{eq:XG} and the constraint that $\boldsymbol{X}^{\top}\boldsymbol{X}=\boldsymbol{I}_n$. Then  $[\mathcal{G}^{k+1}]$ can be computed by
\begin{align}
    [\mathcal{G}^{k+1}]=& \underset{[\mathcal{G}]}{\arg\!\min}\;\psi([\mathcal{G}])+ \frac{\delta}{2}\| [\mathcal{G}]-[\mathcal{O}^{k+1}]\|^{2}_{F}+\frac{\alpha_{\mathcal{G}}}{2}\|[\mathcal{G}]-[\mathcal{G}^{k}]\|^{2}_{F},\nonumber\\
     =&  \underset{[\mathcal{G}]}{\arg\!\min}\;\tilde{w}_{\mathcal{G}}\psi([\mathcal{G}])+ \frac{1}{2}\| [\mathcal{G}]-\left([\mathcal{G}^k]-\tilde{\alpha}_{\mathcal{G}}([\mathcal{G}^{k}]-[\mathcal{O}^{k+1}])\right)\|^{2}_{F},\nonumber\\ =&\operatorname{prox}_{\tilde{w}_{\mathcal{G}}\psi}\left([\mathcal{G}^{k}] -\tilde{\alpha}_{\mathcal{G}}([\mathcal{G}^{k}] -[\mathcal{O}^{k+1}])\right),\label{eq:Gupdate}
\end{align}
where $[\mathcal{O}^{k+1}]=\mathscr{R}(\mathcal{L}^{k})\times_1 [\boldsymbol{X}_1^{k+1}]^{\top}\times_2 [\boldsymbol{X}_2^{k+1}]^{\top}\times_3 [\boldsymbol{X}_3^{k+1}]^{\top}$, $\tilde{w}_{\mathcal{G}}=\frac{1}{\delta+\alpha_{\mathcal{G}}}$, and $\tilde{\alpha}_{\mathcal{G}}=\frac{\delta}{\delta+\alpha_{\mathcal{G}}}$.

\subsection{The update of $\mathcal{L}$} 
Before we compute $\mathcal{L}^{k+1}$, we define the transpose of \(\mathscr{R}\) by \(\mathscr{R}^{\top}:\) \(\mathbb{R}^{m_{1}\times m_{2}\times m_{3}\times N} \to \mathbb{R}^{I_{1}\times I_{2}\times I_{3}}\) satisfying \(\langle \mathscr{R}(\mathcal{L}),[\mathcal{Y}]\rangle=\langle \mathcal{L},\mathscr{R}^{\top}([\mathcal{Y}])\rangle\) for any \(\mathcal{L}\in \mathbb{R}^{I_{1}\times I_{2}\times I_{3}}\), and $[\mathcal{Y}]\in\mathbb{R}^{m_{1}\times m_{2}\times m_{3}\times N}$. Then we have
\begin{displaymath}
    \|\mathscr{R}(\mathcal{L})\|_F^2=\langle \mathcal{L},\mathscr{R}^{\top}\mathscr{R}(\mathcal{L})\rangle_F=\langle \mathcal{L},\mathcal{W}_{\mathscr{R}}\odot \mathcal{L}\rangle_F,
\end{displaymath}
where \(\mathcal{W}_{\mathscr{R}}\in \mathbb{R}_{++}^{I_{1}\times I_{2}\times I_{3}}\) denotes a weight tensor such that the component-wise multiplication with \(\mathcal{W}_{\mathscr{R}}\) is an equivalent operation of $\mathscr{R}^{\top}\mathscr{R}$, i.e., $\mathcal{W}_{\mathscr{R}}\odot \operatorname{Id}=\mathscr{R}^{\top}\mathscr{R}$, $\odot$ denotes the component-wise multiplication, and $\operatorname{Id}$ denotes the identity mapping on $\mathbb{R}^{I_{1}\times I_{2}\times I_{3}}$. Then $\mathcal{L}^{k+1}$ can be computed in a unique closed form  as follows
\begin{align}\label{eq:Lupdate}
    \mathcal{L}^{k+1}= &(\delta\mathcal{W}_\mathscr{R}+\mathcal{I})^{-1}\odot\left(\delta\mathscr{R}^{\top}([\mathcal{Y}^{k+1}]) + \mathcal{D}-\mathcal{S}^{k+1}\right),
\end{align}
where  $[\mathcal{Y}^{k+1}]$ denotes the approximated low-rank tensor
\begin{equation}\label{eq:Yupdate}
	[\mathcal{Y}^{k+1}]=[\mathcal{G}^{k+1}]\times_1 [ \boldsymbol{X}_1^{k+1}]\times_2 [ \boldsymbol{X}_2^{k+1}]\times_3 [ \boldsymbol{X}_3^{k+1}],
\end{equation}
$\mathcal{I}\in \mathbb{R}^{I_{1}\times I_{2}\times I_{3}}$ denotes the tensor with all entries equal to $1$, and $\mathcal{W}_\mathscr{R}^{-1}$ denotes the component-wise inverse of $\mathcal{W}_\mathscr{R}$, i.e., the $(i_1,i_2,i_3)$-th entry of $\mathcal{W}_\mathscr{R}^{-1}$ is equal to $1/(\mathcal{W}_\mathscr{R})_{i_1i_2i_3}$.

All in all, the proposed P-BCD algorithm for model~\eqref{model:HSI} is summarized in Algorithm~\ref{alg1}.
\begin{algorithm}[H]
	\renewcommand{\algorithmicrequire}{\textbf{Input:}}
	\renewcommand{\algorithmicensure}{\textbf{Output:}}
	\caption{Proximal BCD (P-BCD) algorithm for model~\eqref{model:HSI}}
	\label{alg1}
	\begin{algorithmic}[1]
		\STATE Initialize $(\mathcal{S}^0,[ \boldsymbol{X}_1^0],[ \boldsymbol{X}_2^0],[ \boldsymbol{X}_3^0],[\mathcal{G}^0],\mathcal{L}^0)$ with \([ \boldsymbol{X}_i^0]\in [\mathbb{S}_{m_i,n_i}]\);
  \STATE Set the tensor extraction operator $\mathscr{R}$;
		\STATE  Set parameters $\alpha_{\mathcal{S}}, \alpha_{ \boldsymbol{X}}, \alpha_{\mathcal{G}}>0$;
  \STATE Set $k=0$.
		\REPEAT
		\STATE Compute $\mathcal{S}^{k+1}$ by \eqref{eq:Supdate};
  \STATE Compute $[ \boldsymbol{X}_i^{k+1}]$ by \eqref{eq:Xupdate},  $i=1,2,3$;
  \STATE Compute $[\mathcal{G}^{k+1}]$ by \eqref{eq:Gupdate};
\STATE Compute $\mathcal{L}^{k+1}$ by \eqref{eq:Lupdate};
		\STATE $k\leftarrow k+1$.
		\UNTIL the stopping criterion is met.
		\ENSURE $(\mathcal{S}^k,[ \boldsymbol{X}_1^k],[ \boldsymbol{X}_2^k],[ \boldsymbol{X}_3^k],[\mathcal{G}^k],\mathcal{L}^k)$. \end{algorithmic}
\end{algorithm}


\section{Convergence Analysis of The P-BCD Algorithm}\label{sec:convergence}
The proposed P-BCD algorithm aims to solve a particular optimization problem of the form as in~\eqref{model:HSI}.  An optimal solution of each subproblem  in our algorithm is obtained as shown in the previous subsections.
The P-BCD algorithm can be viewed as a special variant of the proximal alternating linearization minimization (PALM)~\cite{Bolte2014} extended for multiple blocks or the block coordinate update with prox-linear approximation~\cite{xu2017globally}, called the block prox-linear method. In the following, we present the convergence results of the P-BCD algorithm.

Let \(\mathcal{Z}:=(\mathcal{S},[ \boldsymbol{X}_1],[ \boldsymbol{X}_2],[ \boldsymbol{X}_3],[\mathcal{G}],\mathcal{L})\). First, we define the first-order optimality condition of the orthogonal constrained optimization problem as in \eqref{model:HSI}. The point  $\bar{\mathcal{Z}}:=(\bar{\mathcal{S}},[\bar{ \boldsymbol{X}}_1],[\bar{ \boldsymbol{X}}_2],[\bar{ \boldsymbol{X}}_3],[\bar{\mathcal{G}}],\bar{\mathcal{L}})$ is a first-order stationary point of problem \eqref{model:HSI} if $\boldsymbol{0}\in \partial \Phi(\bar{\mathcal{Z}})$, that is,
\begin{align}
&\boldsymbol{0}\in
	\bar{\mathcal{L}}+
	\bar{\mathcal{S}}-\mathcal{D} + \gamma
	\partial\varphi(\bar{\mathcal{S}}),\nonumber\\
&\boldsymbol{0}=\operatorname{grad}_{[ \boldsymbol{X}_i]} H([\bar{ \boldsymbol{X}}_1],[\bar{ \boldsymbol{X}}_2],[\bar{ \boldsymbol{X}}_3],[\bar{\mathcal{G}}],\bar{\mathcal{L}}),\quad [\bar{\boldsymbol{X}}_i]^{\top}[\bar{ \boldsymbol{X}}_i]=[\boldsymbol{I}_{n_i}], i=1,2,3,\label{eq:gradXH}\\
&\boldsymbol{0}\in \delta \nabla_{[\mathcal{G}]}H([\bar{\boldsymbol{X}}_1],[\bar{\boldsymbol{X}}_2],[\bar{\boldsymbol{X}}_3],[\bar{\mathcal{G}}],\bar{\mathcal{L}})+\partial \psi ([\bar{\mathcal{G}}]),\nonumber\\
&\boldsymbol{0}= \bar{\mathcal{L}}+\bar{\mathcal{S}}-\mathcal{D}+ \delta\nabla_{\mathcal{L}}H([\bar{\boldsymbol{X}}_1],[\bar{\boldsymbol{X}}_2],[\bar{\boldsymbol{X}}_3],[\bar{\mathcal{G}}],\bar{\mathcal{L}}),\nonumber
\end{align}
where  $\operatorname{grad}_{[\boldsymbol{X}_i]} H([\bar{\boldsymbol{X}}_1],[\bar{\boldsymbol{X}}_2],[\bar{\boldsymbol{X}}_3],[\bar{\mathcal{G}}],\bar{\mathcal{L}})$ denotes the Riemannian gradient of $H$ with respect to $[\boldsymbol{X}_i]$ evaluated at $([\bar{\boldsymbol{X}}_1],[\bar{\boldsymbol{X}}_2],[\bar{\boldsymbol{X}}_3],[\bar{\mathcal{G}}],\bar{\mathcal{L}})$, $ i=1,2,3$, and $\partial\varphi$ and $\partial \psi$  denote the subdifferentials of $\varphi$ and $\psi$, respectively. Then we compute the gradients of $H$ explicitly and replace the optimality condition for orthogonal constraints as in \eqref{eq:gradXH} using an equivalent condition introduced in \cite{Gao2018A}. Hence, we call $\bar{\mathcal{Z}}$ is a first-order stationary point of problem \eqref{model:HSI} if
\begin{subequations}
\begin{align}
&\boldsymbol{0}\in \bar{\mathcal{L}}+
	\bar{\mathcal{S}}-\mathcal{D} + \gamma
	\partial\varphi(\bar{\mathcal{S}}),\label{Eq:optimala}\\
&\boldsymbol{0}=([\boldsymbol{I}_{m_i}]-[\bar{\boldsymbol{X}}_i][\bar{\boldsymbol{X}}_i]^{\top})[\bar{\boldsymbol{H}}_i], \label{Eq:optimalb}\\
&\boldsymbol{0}=[\bar{ \boldsymbol{H}}_i]^{\top}[\bar{ \boldsymbol{X}}_i]-[\bar{ \boldsymbol{X}}_i]^{\top}[\bar{ \boldsymbol{H}}_i], \label{Eq:optimalc}\\
&[\bar{ \boldsymbol{X}}_i]^{\top}[\bar{ \boldsymbol{X}}_i]=[\mathbf{I}_{n_i}],\label{Eq:optimald}\\
&\boldsymbol{0}\in \delta([\bar{\mathcal{G}}]-[\bar{\mathcal{O}}])+\partial \psi ([\bar{\mathcal{G}}]),\label{Eq:optimale}\\
&\boldsymbol{0}= \bar{\mathcal{L}}+\bar{\mathcal{S}}-\mathcal{D}+\delta \mathcal{W}_{\mathscr{R}}\odot\bar{\mathcal{L}}-\delta\mathscr{R}^{\top}([\bar{\mathcal{Y}}]),\label{Eq:optimalf}
\end{align}
\end{subequations}
where $ i=1,2,3$,  and
\begin{align*}
    [\bar{\boldsymbol{P}}_{i}]=&(\mathscr{R}(\bar{\mathcal{L}}))_{[(i)]}\\
    [\bar{\boldsymbol{Q}}_{1}]=& ([\bar{\mathcal{G}}]\times_2[ \bar{\boldsymbol{X}}_2]\times_3[ \bar{\boldsymbol{X}}_3])_{[(1)]}\\
    [\bar{\boldsymbol{Q}}_{2}]=& ([\bar{\mathcal{G}}]\times_1[ \bar{\boldsymbol{X}}_1]\times_3[ \bar{\boldsymbol{X}}_3])_{[(2)]}\\
    [\bar{\boldsymbol{Q}}_{3}]=& ([\bar{\mathcal{G}}]\times_1[ \bar{\boldsymbol{X}}_1]\times_2[ \bar{\boldsymbol{X}}_2])_{[(3)]}\\
    [\bar{\boldsymbol{H}}_i]=&([\bar{\boldsymbol{X}}_i][\bar{\boldsymbol{Q}}_{i}]-[\bar{\boldsymbol{P}}_{i}]) [\bar{\boldsymbol{Q}}_{i}]^{\top}\\
    [\bar{\mathcal{O}}]=&\mathscr{R}(\bar{\mathcal{L}})\times_1 [\bar{\boldsymbol{X}}_1]^{\top}\times_2 [\bar{\boldsymbol{X}}_2]^{\top}\times_3 [\bar{\boldsymbol{X}}_3]^{\top}\\
    [\bar{\mathcal{Y}}]=&[\bar{\mathcal{G}}]\times_1 [\bar{\boldsymbol{X}}_1]\times_2 [\bar{\boldsymbol{X}}_2]\times_3 [\bar{\boldsymbol{X}}_3].
\end{align*}

Second, we prove the non-increasing monotonicity of the objective sequence\\ $\{\Phi(\mathcal{Z}^k)\}$ and the boundedness of the sequence $\{\mathcal{Z}^k\}$ generated by Algorithm~\ref{alg1}.

\begin{theorem}\label{Thm:IterateSeq}
Let $\{\mathcal{Z}^k\}$ be the sequence generated by Algorithm~\ref{alg1}. Then the following statements hold:
\begin{enumerate}
    \item[(i)]  The sequence $\{\Phi(\mathcal{Z}^k)\}$ of function values at the iteration points decreases monotonically, and
\begin{equation}\label{Eq:Decrease}
    \begin{split}
&\Phi(\mathcal{Z}^k)-\Phi(\mathcal{Z}^{k+1})\\
\ge &\frac{\alpha}{2}\|\mathcal{S}^{k+1}-\mathcal{S}^{k}\|^{2}_{F}+\frac{\alpha}{2}\sum_{i=1}^{3}\|[\boldsymbol{X}_i^{k+1}]-[\boldsymbol{X}_i^{k}]\|^{2}_{F}+\frac{\alpha}{2}\|[\mathcal{G}^{k+1}]-[\mathcal{G}^{k}]\|^{2}_{F}.
    \end{split}
\end{equation}
\item[(ii)] The sequence $\{\mathcal{Z}^{k}\}$ is bounded.

\item[(iii)] $\displaystyle\lim_{k\to\infty} \|\mathcal{S}^{k+1}-\mathcal{S}^{k}\|_{F}=0$, $\displaystyle\lim_{k\to\infty}\|[\boldsymbol{X}_i^{k+1}]-[\boldsymbol{X}_i^{k}]\|_{F}=0$,   for $i=1,2,3$, and $\displaystyle\lim_{k\to\infty}\|[\mathcal{G}^{k+1}]-[\mathcal{G}^{k}]\|_{F}=0$.
\end{enumerate}
\end{theorem}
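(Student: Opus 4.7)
The plan is to carry out the standard three-step analysis for proximal block coordinate descent: (a) a sufficient-decrease inequality obtained from each proximal surrogate, (b) coercivity of each term in $\Phi$ to upgrade the objective bound to an iterate bound, and (c) a telescoping argument giving summability and hence vanishing of successive differences.

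For (i), I would exploit the defining minimization property of each of the six sub-updates by plugging in the previous iterate as a feasible competitor. For the $\mathcal{S}$-subproblem the proximal metric is weighted by $\sqrt{\mathcal{W}_{\mathbf{R}}}$, so I would use that $\mathcal{W}_{\mathbf{R}}$ is componentwise positive on a finite grid, hence bounded below by some $w_{\min}>0$, to replace the weighted proximal term by $\frac{\alpha_{\mathcal{S}} w_{\min}}{2}\|\mathcal{S}^{k+1}-\mathcal{S}^k\|_F^2$. For each $[X_i]$-subproblem the previous iterate $[X_i^k]$ is feasible on $[\mathbb{S}_{m_i,n_i}]$, since the update~\eqref{eq:Xupdate} preserves independent orthogonality, giving a decrease of $\frac{\alpha_X}{2}\|[X_i^{k+1}]-[X_i^k]\|_F^2$; the $[\mathcal{G}]$-subproblem yields an analogous bound with $\alpha_{\mathcal{G}}$. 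The $\mathcal{L}$-subproblem carries no explicit proximal term, but as an exact (strongly convex quadratic) minimization it is non-increasing in $\Phi$. Chaining the six inequalities through the intermediate states and setting $\alpha := \min(\alpha_{\mathcal{S}} w_{\min},\alpha_X,\alpha_{\mathcal{G}})>0$ yields~\eqref{Eq:Decrease}.

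For (ii), the decrease in (i) implies $\Phi(\mathcal{Z}^k) \le \Phi(\mathcal{Z}^0)$ for all $k$, and since each summand of $\Phi$ is non-negative, each is bounded by $\Phi(\mathcal{Z}^0)$ along the sequence. Feasibility $[X_i^k]\in [\mathbb{S}_{m_i,n_i}]$ bounds $\{[X_i^k]\}$ directly; the bound on $\|[\mathcal{G}^k]\|_{1,w}$ with $w\in\mathbb{R}^N_{++}$ bounds $\|[\mathcal{G}^k]\|_F$ by equivalence of norms in finite dimension; the bound on $\|\sqrt{\mathcal{W}_{\mathbf{R}}}\odot\mathcal{S}^k\|_{2,p}^p$ bounds each mode-$1$ fiber in $\ell_2$ and therefore bounds $\|\mathcal{S}^k\|_F$ after dividing by $w_{\min}$; finally the data-fidelity bound combined with boundedness of $\mathcal{S}^k$ and $\mathcal{D}$ yields boundedness of $\|\mathbf{R}(\mathcal{L}^k)\|_F^2 = \langle \mathcal{L}^k, \mathcal{W}_{\mathbf{R}}\odot \mathcal{L}^k\rangle \ge w_{\min}\|\mathcal{L}^k\|_F^2$, so $\{\mathcal{L}^k\}$ is bounded. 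Part (iii) is then immediate: telescoping~\eqref{Eq:Decrease} and using $\Phi\ge 0$ gives
\[
\sum_{k=0}^{\infty}\!\Big(\|\mathcal{S}^{k+1}-\mathcal{S}^k\|_F^2 + \sum_{i=1}^3\|[X_i^{k+1}]-[X_i^k]\|_F^2 + \|[\mathcal{G}^{k+1}]-[\mathcal{G}^k]\|_F^2\Big) \le \frac{2}{\alpha}\Phi(\mathcal{Z}^0)<\infty,
\]
so each summand must tend to zero.

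The only step that is not purely mechanical is the coercivity argument for $\mathcal{S}$ in (ii): the nonconvex $\ell_{2,p}$ term with $p\in(0,1)$ is not coercive in the classical sense, and one must observe that, on a finite index set, boundedness of $\sum_{i_2,i_3}\|\hat{s}_{:i_2i_3}\|_2^p$ forces each fiber norm to be bounded individually (by $C^{1/p}$), which then bounds $\|\hat{\mathcal{S}}\|_F$ and, after dividing by $\sqrt{w_{\min}}$, bounds $\|\mathcal{S}^k\|_F$. Apart from this and the systematic use of $\mathcal{W}_{\mathbf{R}}\in\mathbb{R}_{++}^{I_1\times I_2\times I_3}$ to convert weighted norms into unweighted ones, the argument follows the PALM/block prox-linear template of~\cite{Bolte2014,xu2017globally}.
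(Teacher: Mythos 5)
Your proposal is correct and follows essentially the same route as the paper: sufficient decrease from each subproblem by using the previous iterate as a feasible competitor (with your $w_{\min}$ playing the role of the paper's $c_1^2=\min(\mathcal{W}_{\mathbf{R}})_{i_1i_2i_3}$ to pass from the weighted to the unweighted proximal metric), Stiefel feasibility plus coercivity of $\varphi$ and $\psi$ for boundedness, and telescoping for (iii). The only cosmetic difference is in bounding $\{\mathcal{L}^k\}$: the paper reads it off the closed-form update \eqref{eq:Lupdate} as a continuous function of the already-bounded blocks, whereas you extract it from the bounded data-fidelity term via $\|\mathbf{R}(\mathcal{L}^k)\|_F^2\ge w_{\min}\|\mathcal{L}^k\|_F^2$; both are valid.
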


\begin{proof} (i) According to the update of $\mathcal{S}$, we have
\begin{align*}
    &\Phi(\mathcal{Z}^k)-\Phi(\mathcal{S}^{k+1},[\boldsymbol{X}_1^k],[\boldsymbol{X}_2^k],[\boldsymbol{X}_3^k],[\mathcal{G}^k],\mathcal{L}^k)\\
    \ge& \frac{\alpha_{\mathcal{S}}}{2}\|\mathcal{S}^{k+1}-\mathcal{S}^{k}\|^{2}_{F},
\end{align*}
where $\alpha>0$.
Next, it follows from the update of $[\boldsymbol{X}_i]$ and Lemma~\ref{Thm:Stiefel} that
\begin{align*}
&\Phi(\mathcal{S}^{k+1},[\boldsymbol{X}_1^k],[\boldsymbol{X}_2^k],[\boldsymbol{X}_3^k],[\mathcal{G}^k],\mathcal{L}^k)-\Phi(\mathcal{S}^{k+1},[\boldsymbol{X}_1^{k+1}],[\boldsymbol{X}_2^{k+1}],[\boldsymbol{X}_3^{k+1}],[\mathcal{G}^k],\mathcal{L}^k)\\
\geq & \frac{\alpha_{\boldsymbol{X}}}{2}\sum_{i=1}^{3}\|[\boldsymbol{X}_i^{k+1}]-[\boldsymbol{X}_i^{k}]\|^{2}_{F}.
\end{align*}
Then by the updates of $[\mathcal{G}]$ and $\mathcal{L}$, we have
\begin{align*}
&\Phi(\mathcal{S}^{k+1},[\boldsymbol{X}_1^{k+1}],[\boldsymbol{X}_2^{k+1}],[\boldsymbol{X}_3^{k+1}],[\mathcal{G}^{k}],\mathcal{L}^k)\\
&~~~~~~~-\Phi(\mathcal{S}^{k+1},[\boldsymbol{X}_1^{k+1}],[\boldsymbol{X}_2^{k+1}],[\boldsymbol{X}_3^{k+1}],[\mathcal{G}^{k+1}],\mathcal{L}^k)\ge\frac{\alpha_{\mathcal{G}}}{2}\|[\mathcal{G}^{k+1}]-[\mathcal{G}^{k}]\|^{2}_{F}
\end{align*}
and
\begin{displaymath}
    \Phi(\mathcal{S}^{k+1},[\boldsymbol{X}_1^{k+1}],[\boldsymbol{X}_2^{k+1}],[\boldsymbol{X}_3^{k+1}],[\mathcal{G}^{k+1}],\mathcal{L}^k)-\Phi(\mathcal{Z}^{k+1})\ge 0.
\end{displaymath}
Combining the inequalities above, we  obtain \eqref{Eq:Decrease} with $\alpha =\min\{\alpha_{\mathcal{S}},\alpha_{\mathcal{G}},\alpha_{\boldsymbol{X}}\}>0$.

(ii) Since $[\boldsymbol{X}_i^k]^{\top}[\boldsymbol{X}_i^k]=[\boldsymbol{I}_{n_i}]$ for each $i=1,2,3$, we have the sequence $\{[\boldsymbol{X}_i^k]\}$ is bounded.
By (i), we have $\Phi(\mathcal{Z}^k)\le \Phi(\mathcal{Z}^0)$. Also, we observe that $\Phi(\mathcal{Z}^k)\ge \gamma \varphi(\mathcal{S}^k)+\psi([\mathcal{G}^k])\ge 0$. Since
\begin{displaymath}
    \lim_{\|\mathcal{S}\|_F\to \infty}\varphi(\mathcal{S})=\infty \quad\mbox{ and }\quad \lim_{ \|[\mathcal{G}]\|_F\to \infty} \psi([\mathcal{G}])=\infty,
\end{displaymath}
we must have the sequences $\{\mathcal{S}^k\}$ and $\{[\mathcal{G}^k]\}$ are bounded. As shown in \eqref{eq:Lupdate} that $\mathcal{L}^k$ is uniquely determined by  $\mathcal{S}^k,[\boldsymbol{X}_1^k],[\boldsymbol{X}_2^k],[\boldsymbol{X}_3^k]$ and $[\mathcal{G}^k]$, the sequence $\{\mathcal{L}^k\}$ is also bounded.

(iii) Let $K$ be an arbitrary integer. Summing \eqref{Eq:Decrease} from $k=1$ to $K-1$, we have
 \begin{align*}
 &\sum^{K-1}_{k=0}\|\mathcal{S}^{k+1}-\mathcal{S}^{k}\|^{2}_{F}+ \sum^{K-1}_{k=0}\sum_{i=1}^{3}\|[\boldsymbol{X}_i^{k+1}]-[\boldsymbol{X}_i^{k}]\|^{2}_{F}+ \sum^{K-1}_{k=0}\|[\mathcal{G}^{k+1}]-[\mathcal{G}^{k}]\|^{2}_{F}\\
 \le& \frac{2}{\alpha}\left(\Phi(\mathcal{Z}^0)-\Phi(\mathcal{Z}^K)\right)\\
 \le&\frac{2}{\alpha}\Phi(\mathcal{Z}^0).
\end{align*}
 Taking the limits of both sides of the inequality as $K\rightarrow \infty$, we have $\sum^{\infty}_{k=0}\|\mathcal{S}^{k+1}-\mathcal{S}^{k}\|^{2}_{F}<\infty$, $\sum_{k=0}^{\infty}\|[\boldsymbol{X}_i^{k+1}]-[\boldsymbol{X}_i^{k}]\|^{2}_{F}<\infty$ and $\sum_{k=0}^{\infty}\|[\mathcal{G}^{k+1}]-[\mathcal{G}^{k}]\|^{2}_{F}<\infty$. Then assertion (iii) immediately holds.
\end{proof}

In addition to the assertions presented in Theorem~\ref{Thm:IterateSeq}, more assertions can be derived in the following corollary.

\begin{corollary}~\label{Cor:L} Let $\{\mathcal{Z}^k\}$ be the sequence generated by Algorithm~\ref{alg1}. Then $\displaystyle\lim_{k\to\infty}\|[\mathcal{Y}^{k+1}]-[\mathcal{Y}^{k}]\|_{F}=0$ and
 $\displaystyle\lim_{k\to\infty} \|\mathcal{L}^{k+1}-\mathcal{L}^{k}\|_{F}=0$.
\end{corollary}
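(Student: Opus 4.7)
The plan is to bound both differences by quantities that Theorem~\ref{Thm:IterateSeq} has already shown to vanish, using only boundedness and the multilinearity of the tensor product.

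First I would handle $\|[\mathcal{Y}^{k+1}]-[\mathcal{Y}^k]\|_F$ by the standard telescoping trick for a four-factor multilinear product. Writing
\begin{align*}
[\mathcal{Y}^{k+1}]-[\mathcal{Y}^{k}]
={}& ([\mathcal{G}^{k+1}]-[\mathcal{G}^{k}])\times_1[X_1^{k+1}]\times_2[X_2^{k+1}]\times_3[X_3^{k+1}]\\
&+[\mathcal{G}^{k}]\times_1([X_1^{k+1}]-[X_1^{k}])\times_2[X_2^{k+1}]\times_3[X_3^{k+1}]\\
&+[\mathcal{G}^{k}]\times_1[X_1^{k}]\times_2([X_2^{k+1}]-[X_2^{k}])\times_3[X_3^{k+1}]\\
&+[\mathcal{G}^{k}]\times_1[X_1^{k}]\times_2[X_2^{k}]\times_3([X_3^{k+1}]-[X_3^{k}]),
\end{align*}
each summand is a mode-product of the incremental factor with three bounded factors. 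The factor matrices lie on $[\mathbb{S}_{m_i,n_i}]$, so $\|[X_i^{k}]\|_F$ is a fixed constant, and Theorem~\ref{Thm:IterateSeq}(ii) gives a uniform bound $M$ on $\|[\mathcal{G}^k]\|_F$. Using the submultiplicativity $\|\mathcal{G}\times_i X\|_F\le\|X\|_{\mathrm{op}}\|\mathcal{G}\|_F$ (equivalently $\|XG_{(i)}\|_F\le\|X\|_{\mathrm{op}}\|G_{(i)}\|_F$) and noting $\|X_i^{(j)}\|_{\mathrm{op}}=1$ on the Stiefel manifold, I can bound
\begin{equation*}
\|[\mathcal{Y}^{k+1}]-[\mathcal{Y}^{k}]\|_F \le C\Bigl(\|[\mathcal{G}^{k+1}]-[\mathcal{G}^{k}]\|_F+\sum_{i=1}^3\|[X_i^{k+1}]-[X_i^{k}]\|_F\Bigr)
\end{equation*}
for some constant $C$ depending only on $M$ and the fixed norms of the Stiefel factors. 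By Theorem~\ref{Thm:IterateSeq}(iii) the right-hand side tends to zero, giving the first limit.

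For $\|\mathcal{L}^{k+1}-\mathcal{L}^k\|_F$, the closed-form update \eqref{eq:Lupdate} applied at iterations $k$ and $k+1$ yields
\begin{equation*}
\mathcal{L}^{k+1}-\mathcal{L}^{k}=\tilde{\delta}\,\mathcal{W}_{\mathbf{R}}^{-1}\odot\mathbf{R}^{\top}\bigl([\mathcal{Y}^{k+1}]-[\mathcal{Y}^{k}]\bigr)-(1-\tilde{\delta})(\mathcal{S}^{k+1}-\mathcal{S}^{k}).
\end{equation*}
Since $\mathcal{W}_{\mathbf{R}}\in\mathbb{R}_{++}^{I_1\times I_2\times I_3}$ and $\mathbf{R}^{\top}$ is a fixed bounded linear operator, the first term is controlled by a constant multiple of $\|[\mathcal{Y}^{k+1}]-[\mathcal{Y}^{k}]\|_F$, which was just shown to vanish, and the second term vanishes by Theorem~\ref{Thm:IterateSeq}(iii). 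The triangle inequality then delivers $\lim_{k\to\infty}\|\mathcal{L}^{k+1}-\mathcal{L}^{k}\|_F=0$.

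I do not anticipate a genuine obstacle here: the argument is essentially an application of local Lipschitz continuity of the multilinear map on bounded sets together with boundedness of $\mathbf{R}^{\top}$. The only mild bookkeeping issue is translating the mode-product bound to the independent-stack notation $[\,\cdot\,]$, but this reduces to applying the scalar-tensor inequality on each slice $j=1,\dots,N$ and summing in Frobenius norm.
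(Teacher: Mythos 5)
Your proposal is correct and follows essentially the same route as the paper's proof: a one-factor-at-a-time telescoping of the multilinear product, bounded via the orthogonality of the $[X_i]$ and the boundedness of $\|[\mathcal{G}^k]\|_F$ from Theorem~\ref{Thm:IterateSeq}(ii), followed by differencing the closed-form update \eqref{eq:Lupdate} and invoking Theorem~\ref{Thm:IterateSeq}(iii). The only difference is cosmetic — you write out the telescoping sum explicitly where the paper states the two intermediate inequalities directly — so no further comment is needed.
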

\begin{proof} Since $\|[\mathcal{G}^{k+1}]\times
_{i}[\boldsymbol{X}_i]-[\mathcal{G}^{k}]\times
_{i}[\boldsymbol{X}_i]\|_F=\|[\mathcal{G}^{k+1}]-[\mathcal{G}^{k}]\|_F$ for any $[\boldsymbol{X}_i]\in [\mathbb{S}_{m_i,n_i}]$ and $\|[\mathcal{G}]\times
_{i}[\boldsymbol{X}_i^{k+1}]-[\mathcal{G}]\times
_{i}[\boldsymbol{X}_i^{k}]\|_F\leq \|[\mathcal{G}]\|_F\|[\boldsymbol{X}_i^{k+1}]-[\boldsymbol{X}_i^{k}]\|_F$, we have
\begin{align*}
    \|[\mathcal{Y}^{k+1}]-[\mathcal{Y}^{k}]\|_F
    \leq  &\|[\mathcal{G}^{k+1}]-[\mathcal{G}^{k}]\|_F+ c_1\|[\boldsymbol{X}_1^{k+1}]-[\boldsymbol{X}_1^{k}]\|_{F}\\
    &+c_1\|[\boldsymbol{X}_2^{k+1}]-[\boldsymbol{X}_2^{k}]\|_{F}+c_1\|[\boldsymbol{X}_3^{k+1}]-[\boldsymbol{X}_3^{k}]\|_{F},
\end{align*}
where $c_1=\max_k \|[\mathcal{G}^k]\|_F<\infty$ according to assertion (ii) in Theorem~\ref{Thm:IterateSeq}. Then it immediately follows from assertion (iii) in Theorem~\ref{Thm:IterateSeq} that  $\lim_{k\to\infty}\|[\mathcal{Y}^{k+1}]-[\mathcal{Y}^{k}]\|_{F}=0$.

Also, we have
\begin{align*}
    \|\mathcal{L}^{k+1}-\mathcal{L}^{k}\|_F
    \leq  &\delta \sqrt{c_2}\|[\mathcal{Y}^{k+1}]-[\mathcal{Y}^{k}]\|_F + \|\mathcal{S}^{k+1}-\mathcal{S}^{k}\|_F,
\end{align*}
where $c_2=\max (\mathcal{W}_\mathscr{R})_{i_1i_2i_3}$. Then by assertion (iii) in Theorem~\ref{Thm:IterateSeq}, we can have $\lim_{k\to\infty}\|\mathcal{L}^{k+1}-\mathcal{L}^{k}\|_{F}=0$.
 \end{proof}

Third, we apply the results in \cite{Gao2018A} to the updates of $[\boldsymbol{X}_i]$ in our proposed algorithm. Those results are useful for proving three equalities of substationarity, symmetry, and feasibility for $[\boldsymbol{X}_i]$. According to Lemma 3.3 in \cite{Gao2018A}, we can have the following lemma and then we prove the decrease of the function value $H$ after each update of $[\boldsymbol{X}_i]$.
 \begin{lemma}[\cite{Gao2018A}]\label{Lem:ProjSmn}
     Let $h:\mathbb{R}^{m\times n}\to \mathbb{R}$ be defined by $h(\boldsymbol{X})=\frac{1}{2}\|\boldsymbol{X}-\boldsymbol{P}\boldsymbol{Q}^\top\|_F^2$, where $\boldsymbol{Q}\in \mathbb{R}^{n\times m}$, $\boldsymbol{P}\in \mathbb{R}^{m\times m}$ and $m\geq n$.  If $\boldsymbol{X}\in \mathbb{S}_{m,n}$ and \begin{equation}\label{eq:XbarUpdate}\bar{X}=\operatorname{Proj}_{\mathbb{S}_{m,n}}(\boldsymbol{X}-\tau (\boldsymbol{X}-\boldsymbol{P}\boldsymbol{Q}^{\top})),\end{equation}
     where $\tau\in(0,1)$, then we have $\bar{\boldsymbol{X}}\in \mathbb{S}_{m,n}$ and
     \begin{equation}
         h(\boldsymbol{X})-h(\bar{\boldsymbol{X}})\ge \frac{\tau^{-1}-1}{2(\tau^{-1}+1+\theta)^2}\|(\boldsymbol{I}_{m} -\boldsymbol{X}{\boldsymbol{X}}^{\top})\nabla h(\boldsymbol{X})\|_F^2,
     \end{equation}
     where $\theta=\|\boldsymbol{P}\boldsymbol{Q}^\top\|_2$.
 \end{lemma}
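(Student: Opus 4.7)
My plan is to establish two intermediate estimates and then combine them: a descent estimate
\[
h(X) - h(\bar{X}) \ge \frac{\tau^{-1} - 1}{2}\|\bar{X} - X\|_F^2,
\]
and a projection estimate
\[
\|\bar{X} - X\|_F \ge \frac{1}{\tau^{-1} + 1 + \theta}\|(\mathbf{I}_m - XX^{\top})\nabla h(X)\|_F.
\]
The descent estimate is quick. Since $h$ is quadratic with Hessian $\mathbf{I}_m$, the Taylor identity $h(\bar{X}) = h(X) + \langle \nabla h(X), \bar{X} - X\rangle + \frac{1}{2}\|\bar{X} - X\|_F^2$ is exact. Since $\bar{X}$ is the Stiefel projection of $X - \tau \nabla h(X)$ and $X \in \mathbb{S}_{m,n}$ is itself feasible, $\|\bar{X} - (X - \tau \nabla h(X))\|_F^2 \le \|\tau \nabla h(X)\|_F^2$; expanding and cancelling $\tau^2\|\nabla h(X)\|_F^2$ yields $\langle \nabla h(X), X - \bar{X}\rangle \ge \frac{1}{2\tau}\|\bar{X} - X\|_F^2$. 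Adding these two relations gives the descent estimate, and $\bar{X} \in \mathbb{S}_{m,n}$ follows from Lemma~\ref{Thm:Stiefel}.

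For the projection estimate, set $C = X - \tau \nabla h(X) = (1-\tau)X + \tau PQ^{\top}$. The first-order optimality condition for the Stiefel projection of $C$ produces a symmetric multiplier $\Lambda$ satisfying $\bar{X}(\mathbf{I}_n - \Lambda) = C$, and left-multiplying by $\bar{X}^{\top}$ identifies $\mathbf{I}_n - \Lambda = \bar{X}^{\top} C$. Next I would premultiply $\bar{X}(\mathbf{I}_n - \Lambda) = C$ by the orthogonal projector $\mathbf{I}_m - XX^{\top}$; since $X \in \mathbb{S}_{m,n}$ gives $(\mathbf{I}_m - XX^{\top})X = 0$, the $X$-component on the right vanishes, leaving
\[
(\mathbf{I}_m - XX^{\top})\bar{X}(\mathbf{I}_n - \Lambda) = -\tau(\mathbf{I}_m - XX^{\top})\nabla h(X).
\]
Taking Frobenius norms, applying $\|AB\|_F \le \|A\|_F\,\|B\|_2$, and noting that $(\mathbf{I}_m - XX^{\top})\bar{X} = (\mathbf{I}_m - XX^{\top})(\bar{X} - X)$ has Frobenius norm at most $\|\bar{X} - X\|_F$, one obtains $\tau \|(\mathbf{I}_m - XX^{\top})\nabla h(X)\|_F \le \|\bar{X} - X\|_F \cdot \|\mathbf{I}_n - \Lambda\|_2$. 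Finally, I would bound $\|\mathbf{I}_n - \Lambda\|_2 = \|\bar{X}^{\top} C\|_2 \le \|C\|_2 \le (1-\tau) + \tau\theta \le \tau(\tau^{-1} + 1 + \theta)$, using $\|\bar{X}\|_2 = \|X\|_2 = 1$ and $\|PQ^{\top}\|_2 = \theta$. Dividing through yields the projection estimate.

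The main obstacle is the KKT-based manipulation in the second paragraph: isolating the normal component $(\mathbf{I}_m - XX^{\top})\bar{X}$ via the Lagrange multiplier, identifying $\mathbf{I}_n - \Lambda$ with $\bar{X}^{\top} C$, and controlling its spectral norm by $1 + \tau\theta$ rather than by a quantity that degenerates as $\tau \to 0$ is what pins down the constant in $(\tau^{-1}+1+\theta)^{-1}$. With both estimates in hand, squaring the projection estimate and chaining with the descent estimate delivers the claimed inequality.
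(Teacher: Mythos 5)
Your proof is correct. Note that the paper itself does not prove this lemma---it is imported verbatim from \cite{Gao2018A} (their Lemma~3.3)---so there is no in-paper argument to compare against; what you have written is a valid self-contained derivation. Both halves check out. The descent estimate is exactly the standard ``sufficient decrease'' argument: the exact second-order expansion of the quadratic $h$ (Hessian $=$ identity) combined with $\|\bar{X}-C\|_F^2\le\|X-C\|_F^2$ for $C=X-\tau\nabla h(X)$ gives $h(X)-h(\bar{X})\ge\frac{\tau^{-1}-1}{2}\|\bar{X}-X\|_F^2$, and $\tau^{-1}-1>0$ since $\tau\in(0,1)$. For the projection estimate, your multiplier identity can even be read off directly from Lemma~\ref{Thm:Stiefel} without invoking KKT: writing $C=U\Sigma V^{\top}$ and $\bar{X}=UV^{\top}$ gives $\bar{X}(V\Sigma V^{\top})=C$ with $\mathbf{I}_n-\Lambda=\bar{X}^{\top}C=V\Sigma V^{\top}$ symmetric positive semidefinite, so $\|\mathbf{I}_n-\Lambda\|_2=\|C\|_2\le(1-\tau)+\tau\theta\le\tau(\tau^{-1}+1+\theta)$; together with $(\mathbf{I}_m-XX^{\top})X=0$ and $\|(\mathbf{I}_m-XX^{\top})(\bar{X}-X)\|_F\le\|\bar{X}-X\|_F$ this yields the stated lower bound on $\|\bar{X}-X\|_F$, and chaining the two estimates reproduces the constant $\frac{\tau^{-1}-1}{2(\tau^{-1}+1+\theta)^2}$ exactly. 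This SVD-based route also handles the possible non-uniqueness of the projection, since the identity holds for any selection $UV^{\top}$ from $\Omega^*(C)$.
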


\begin{proposition}\label{Thm:XX}
Let $H$ be defined as in \eqref{Eq:DefH}. Let $\{\mathcal{Z}^k\}$ be the sequence generated by Algorithm~\ref{alg1} and $[\boldsymbol{X}_i^{k+1}]$ be computed by \eqref{eq:Xupdate}. Then $[\boldsymbol{X}_i^{k+1}]\in [\mathbb{S}_{m_i,n_i}]$ and the  following inequality holds:
\begin{equation}\label{eq:H_ineq1}
\begin{split}
    &H([\boldsymbol{X}_1^k],[\boldsymbol{X}_2^k],[\boldsymbol{X}_3^k],[\mathcal{G}^k],\mathcal{L}^k)-H([\boldsymbol{X}_1^{k+1}],[\boldsymbol{X}_2^{k+1}],[\boldsymbol{X}_3^{k+1}],[\mathcal{G}^k],\mathcal{L}^k)\\
\geq& c_3\sum_{i=1}^{3}\left\| ([\boldsymbol{I}_{m_{i}}]-[\boldsymbol{X}_i^{k}]{[\boldsymbol{X}_i^{k}]}^{\top})[\boldsymbol{H}_i^k]\right\|_F^2
\end{split}
\end{equation}
and
\begin{equation}\label{eq:H_ineq2}
    \left\|[\boldsymbol{H}_i^k]^{\top}[\boldsymbol{X}_i^{k}]-{[\boldsymbol{X}_i^{k}]}^{\top}[H_i^k]\right\|_F
    \leq c_4\|[\boldsymbol{X}_{i+1}^{k}]-[\boldsymbol{X}_{i}^{k}]\|_F,
\end{equation}
where $c_3=\frac{{\tilde{\alpha}_{\boldsymbol{X}}}^{-1}-1}{2({\tilde{\alpha}_{\boldsymbol{X}}}^{-1}+1+\theta_{\max})^2}>0$,  
$c_4=2(\alpha_{\boldsymbol{X}}\sqrt{n_{\max}}+\theta_{\max})>0$, $\theta_{\max}=\max_{ijk}$ $ \|[\boldsymbol{P}_{i}^k]^{(j)}([\boldsymbol{Q}_{i}^k]^{(j)})^{\top}\|_2$, $n_{\max}=\max \{n_1,n_2,n_3\}$, and
\begin{align*}
[\boldsymbol{H}_1^k]&=\nabla_{[\boldsymbol{X}_1]} H([\boldsymbol{X}_1^{k}], [\boldsymbol{X}_2^{k}],[\boldsymbol{X}_3^k],[\mathcal{G}^k],\mathcal{L}^k)\\
[\boldsymbol{H}_2^k]&=\nabla_{[\boldsymbol{X}_2]} H([\boldsymbol{X}_1^{k+1}], [\boldsymbol{X}_2^{k}],[\boldsymbol{X}_3^k],[\mathcal{G}^k],\mathcal{L}^k)\\
[\boldsymbol{H}_3^k]&=\nabla_{[\boldsymbol{X}_3]} H([\boldsymbol{X}_1^{k+1}], [\boldsymbol{X}_2^{k+1}],[\boldsymbol{X}_3^k],[\mathcal{G}^k],\mathcal{L}^k)
\end{align*}
which give
$$[\boldsymbol{H}_i^k]=([\boldsymbol{X}_i^k][\boldsymbol{Q}_{i}^k]-[\boldsymbol{P}_{i}^k]) [\boldsymbol{Q}_{i}^k]^{\top},\quad i=1,2,3.$$

\end{proposition}

\begin{proof}
It follows from Lemma~\ref{Lem:ProjSmn} that  $[\boldsymbol{X}_i^{k+1}]\in \mathbb{S}_{m_i,n_i}$.

To show the first inequality holds, the update of $[\boldsymbol{X}_i^{k+1}]$ in \eqref{eq:Xupdate} can be viewed as the iteration in \eqref{eq:XbarUpdate} with $h([\boldsymbol{X}])=\frac{1}{2}\|[\boldsymbol{X}]-[\boldsymbol{P}_{i}^k][\boldsymbol{Q}_{i}^k]^{\top}\|_F^2$, $[\boldsymbol{X}]=[\boldsymbol{X}_i^{k}]$, $[\bar{\boldsymbol{X}}]=[\boldsymbol{X}_i^{k+1}]$ and $\tau=\tilde{\alpha}_{\boldsymbol{X}}=\frac{\delta}{\delta+\alpha_{\boldsymbol{X}}}$. Then for $i=1$ we have
\begin{align*}
&H([\boldsymbol{X}_1^{k}],[\boldsymbol{X}_{2}^{k}],[\boldsymbol{X}_3^k],[\mathcal{G}^k],\mathcal{L}^k)\\
&-H([\boldsymbol{X}_1^{k+1}],[\boldsymbol{X}_{2}^{k}],[\boldsymbol{X}_3^k],[\mathcal{G}^k],\mathcal{L}^k)\\
=&\frac{1}{2}\|[\boldsymbol{X}_i^{k+1}][\boldsymbol{Q}_{i}^k]-[\boldsymbol{P}_{i}^k]\|_F^2-\frac{1}{2}\|[\boldsymbol{X}_i^{k}][\boldsymbol{Q}_{i}^k]-[\boldsymbol{P}_{i}^k]\|_F^2\\
=&\frac{1}{2}\|[\boldsymbol{X}_i^{k+1}]-[\boldsymbol{P}_{i}^k][\boldsymbol{Q}_{i}^k]^{\top}\|_F^2-\frac{1}{2}\|[\boldsymbol{X}_i^{k}]-[\boldsymbol{P}_{i}^k][\boldsymbol{Q}_{i}^k]^{\top}\|_F^2\\
\geq &c_{ij}^{k} \left\|\left([\mathbf{I}_{m_i}] -[\boldsymbol{X}_i^{k}]{[\boldsymbol{X}_i^{k}]}^{\top}\right)\left([\boldsymbol{X}_i^{k}]-[\boldsymbol{P}_{i}^k][\boldsymbol{Q}_{i}^k]^{\top}\right)\right\|_F^2,
\end{align*}
where $c_{ij}^{k}=\frac{{\tilde{\alpha}_{X}}^{-1}-1}{2({\tilde{\alpha}_{\boldsymbol{X}}}^{-1}+1+\|[\boldsymbol{P}_{i}^k]^{(j)}([\boldsymbol{Q}_{i}^k]^{(j)})^{\top}\|_2)^2}\geq c_3>0$ and $\theta_{\max}$ is bounded, since the sequence $\{\mathcal{Z}^k\}$ is bounded. Using $[\boldsymbol{X}_i^{k}]\in [\mathbb{S}_{m_i,n_i}]$ and $[\boldsymbol{X}_i^{k+1}]\in [\mathbb{S}_{m_i,n_i}]$, we can obtain the fourth line above and rewrite part of the last line as follows
\begin{align*}
   &\left([\mathbf{I}_{m_i}] -[\boldsymbol{X}_i^{k}]{[\boldsymbol{X}_i^{k}]}^{\top}\right)\left([\boldsymbol{X}_i^{k}]-[\boldsymbol{P}_{i}^k][\boldsymbol{Q}_{i}^k]^{\top}\right)\\
   =&\left([\mathbf{I}_{m_i}] -[\boldsymbol{X}_i^{k}]{[\boldsymbol{X}_i^{k}]}^{\top}\right)\left([\boldsymbol{X}_i^{k}][\boldsymbol{Q}_{i}^k]-[\boldsymbol{P}_{i}^k]\right)[\boldsymbol{Q}_{i}^k]^{\top}\\
   =&\left([\boldsymbol{I}_{m_i}] -[\boldsymbol{X}_i^{k}]{[\boldsymbol{X}_i^{k}]}^{\top}\right)[\boldsymbol{H}_i^k].
\end{align*}
 That is, we have
 \begin{equation*}
\begin{split}
    &H([\boldsymbol{X}_1^{k}],[\boldsymbol{X}_{2}^{k}],[\boldsymbol{X}_3^k],[\mathcal{G}^k],\mathcal{L}^k)-H([\boldsymbol{X}_1^{k+1}], [\boldsymbol{X}_{2}^{k}],[\boldsymbol{X}_3^k],[\mathcal{G}^k],\mathcal{L}^k)\\
\geq& c_3\left\|\left([\mathbf{I}_{m_1}] -[\boldsymbol{X}_1^{k}]{[\boldsymbol{X}_1^{k}]}^{\top}\right)[\boldsymbol{H}_1^k]\right\|_F^2.
\end{split}
\end{equation*}
Similarly, we have
 \begin{equation*}
\begin{split}
&H([\boldsymbol{X}_1^{k+1}],[\boldsymbol{X}_{2}^{k}],[\boldsymbol{X}_3^k],[\mathcal{G}^k],\mathcal{L}^k)
-H([\boldsymbol{X}_1^{k+1}], [\boldsymbol{X}_{2}^{k+1}],[\boldsymbol{X}_3^k],[\mathcal{G}^k],\mathcal{L}^k)\\
\geq& c_3\left\|\left([\boldsymbol{I}_{m_2}] -[\boldsymbol{X}_2^{k}]{[\boldsymbol{X}_2^{k}]}^{\top}\right)[\boldsymbol{H}_2^k]\right\|_F^2
\end{split}
\end{equation*}
and
 \begin{equation*}
\begin{split}
&H([\boldsymbol{X}_1^{k+1}],[\boldsymbol{X}_{2}^{k+1}],[\boldsymbol{X}_3^k],[\mathcal{G}^k],\mathcal{L}^k)
-H([\boldsymbol{X}_1^{k+1}], [\boldsymbol{X}_{2}^{k+1}],[\boldsymbol{X}_3^{k+1}],[\mathcal{G}^k],\mathcal{L}^k)\\
\geq & c_3\left\|\left([\boldsymbol{I}_{m_3}] -[\boldsymbol{X}_3^{k}]{[\boldsymbol{X}_3^{k}]}^{\top}\right)[\boldsymbol{H}_3^k]\right\|_F^2.
\end{split}
\end{equation*}
Summing the inequalities above, \eqref{eq:H_ineq1} immediately holds.

Next, we show the second inequality holds. According to the update of $[\boldsymbol{X}_i^{k+1}]$ in \eqref{eq:Xupdate}, if we let
\begin{align*}
	[\boldsymbol{U}^k][\boldsymbol{\Sigma}^k] [\boldsymbol{V}^k]^{\top} &= [\boldsymbol{X}_i^{k}]- \tilde{\alpha}_{\boldsymbol{X}}\left([\boldsymbol{X}_i^{k}]-[\boldsymbol{P}_{i}^k][\boldsymbol{Q}_{i}^k]^{\top}\right)\\
	&=\frac{1}{\alpha_{\boldsymbol{X}}+\delta}\left(\alpha_{\boldsymbol{X}}[\boldsymbol{X}_i^k]+\delta[\boldsymbol{P}_{i}^k][\boldsymbol{Q}_{i}^k]^{\top}\right),
\end{align*}
 then the update $[\boldsymbol{X}_i^{k+1}]$ $=[\boldsymbol{U}^k][\boldsymbol{V}^k]^{\top}$. Hence, we have
\begin{align}
[\boldsymbol{X}_i^{k+1}]^{\top}\left(\alpha_{\boldsymbol{X}}[\boldsymbol{X}_i^{k}]+\delta[\boldsymbol{P}_{i}^k][\boldsymbol{Q}_{i}^k]^{\top}\right)&=\left(\alpha_{\boldsymbol{X}}[\boldsymbol{X}_i^{k}]+\delta[\boldsymbol{P}_{i}^k][\boldsymbol{Q}_{i}^k]^{\top}\right)^{\top}[\boldsymbol{X}_i^{k+1}]\label{eq:xk+1pq}\\
&=(\alpha_{\boldsymbol{X}}+\delta)[\boldsymbol{V}]^k[\boldsymbol{\Sigma}^k][\boldsymbol{V}^k]^{\top}.\nonumber
\end{align}
Then using $[\boldsymbol{X}_i^{k}]\in [\mathbb{S}_{m_i,n_i}]$ and $[\boldsymbol{X}_i^{k+1}]\in [\mathbb{S}_{m_i,n_i}]$,  we can rewrite
\begin{align*}
    &[\boldsymbol{H}_i^k]^{\top}[\boldsymbol{X}_i^{k}]-{[\boldsymbol{X}_i^{k}]}^{\top}[\boldsymbol{H}_i^k]\\
    =&\left(\alpha_{\boldsymbol{X}}[\boldsymbol{X}_i^{k}]+\delta[\boldsymbol{P}_{i}^k][\boldsymbol{Q}_{i}^k]^{\top}\right)^{\top}[\boldsymbol{X}_i^{k}]-[\boldsymbol{X}_i^{k}]^{\top}\left(\alpha_{\boldsymbol{X}} [\boldsymbol{X}_i^{k}]+\delta[\boldsymbol{P}_{i}^k][\boldsymbol{Q}_{i}^k]^{\top}\right)\\
    =&\left([\boldsymbol{X}_i^{k+1}]-[\boldsymbol{X}_i^{k}]\right)^{\top}\left(\alpha_{\boldsymbol{X}}[\boldsymbol{X}_i^{k}]+\delta[\boldsymbol{P}_{i}^k][\boldsymbol{Q}_{i}^k]^{\top}\right)\\
    &-\left(\alpha_{\boldsymbol{X}}[\boldsymbol{X}_i^{k}]+\delta[\boldsymbol{P}_{i}^k][\boldsymbol{Q}_{i}^k]^{\top}\right)^{\top}\left([\boldsymbol{X}_i^{k+1}]-[\boldsymbol{X}_i^{k}]\right),
\end{align*}
where the last equation is obtained by  \eqref{eq:xk+1pq}. Taking the Frobenius norm of both sides, we obtain \eqref{eq:H_ineq2}, where $2\|\alpha_{\boldsymbol{X}}[\boldsymbol{X}_i^{k}]^{(j)}+\delta[\boldsymbol{P}_{i}^k]^{(j)}([\boldsymbol{Q}_{i}^k]^{(j)})^{\top}\|_F\leq c_4$.
\end{proof}

Lastly, we show that every convergent subsequence converges to a first-order stationary point of problem \eqref{model:HSI}.

\begin{theorem}
Let $\{\mathcal{Z}^k\}$ be the sequence generated by Algorithm~\ref{alg1}. Then every accumulation point of $\{\mathcal{Z}^k\}$ is a first-order stationary point of problem \eqref{model:HSI}.
\end{theorem}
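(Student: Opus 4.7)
The plan is to fix any accumulation point $\bar{\mathcal{Z}} = (\bar{\mathcal{S}}, [\bar{X}_1], [\bar{X}_2], [\bar{X}_3], [\bar{\mathcal{G}}], \bar{\mathcal{L}})$ with a convergent subsequence $\mathcal{Z}^{k_j} \to \bar{\mathcal{Z}}$ (existence is guaranteed by Theorem~\ref{Thm:IterateSeq}(ii)) and verify the six stationarity equalities \eqref{Eq:optimala}--\eqref{Eq:optimalf} one at a time by passing to the limit in the first-order optimality conditions of each subproblem. A key preliminary observation: combining $\mathcal{Z}^{k_j} \to \bar{\mathcal{Z}}$ with the consecutive-difference estimates from Theorem~\ref{Thm:IterateSeq}(iii) and Corollary~\ref{Cor:L} also gives $\mathcal{Z}^{k_j+1} \to \bar{\mathcal{Z}}$, which is essential since most optimality conditions relate the iterates at indices $k_j+1$ and $k_j$.

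Next I would write down the first-order condition of each subproblem at iteration $k_j$. For the $\mathcal{S}$-subproblem,
\begin{displaymath}
0 \in \delta \mathcal{W}_{\mathbf{R}} \odot (\mathcal{L}^{k_j} + \mathcal{S}^{k_j+1} - \mathcal{D}) + \gamma \sqrt{\mathcal{W}_{\mathbf{R}}} \odot \partial\varphi(\sqrt{\mathcal{W}_{\mathbf{R}}} \odot \mathcal{S}^{k_j+1}) + \alpha_{\mathcal{S}} \mathcal{W}_{\mathbf{R}} \odot (\mathcal{S}^{k_j+1} - \mathcal{S}^{k_j}),
\end{displaymath}
and analogously the $[\mathcal{G}]$-subproblem yields an inclusion involving $\partial\psi$, while the $\mathcal{L}$-subproblem is already solved explicitly by \eqref{eq:Lupdate}. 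The manifold subproblems are handled through Proposition~\ref{Thm:XX}: since $\Phi$ depends on $[X_i]$ only through $\tfrac{1}{2}H$, the total $\Phi$-decrease contributed by the three $[X_i]$-updates equals $\tfrac{1}{2}$ times the corresponding $H$-decrease and is majorized by $\Phi(\mathcal{Z}^k) - \Phi(\mathcal{Z}^{k+1})$, which is summable because $\Phi$ is nonnegative and nonincreasing. Combining with \eqref{eq:H_ineq1} shows $\sum_k \|([\mathbf{I}_{m_i}] - [X_i^k][X_i^k]^\top)[H_i^k]\|_F^2 < \infty$, and with \eqref{eq:H_ineq2} together with $\|[X_i^{k+1}] - [X_i^k]\|_F \to 0$ from Theorem~\ref{Thm:IterateSeq}(iii) I conclude that both $\|([\mathbf{I}_{m_i}] - [X_i^k][X_i^k]^\top)[H_i^k]\|_F$ and $\|[H_i^k]^\top [X_i^k] - [X_i^k]^\top [H_i^k]\|_F$ vanish in the limit.

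Finally, I would pass every relation to the limit along $\{k_j\}$. Continuity of the tensor contractions together with boundedness of the iterates (Theorem~\ref{Thm:IterateSeq}(ii)) delivers $[H_i^{k_j}] \to [\bar{H}_i]$, $[\mathcal{O}^{k_j+1}] \to [\bar{\mathcal{O}}]$, and $[\mathcal{Y}^{k_j+1}] \to [\bar{\mathcal{Y}}]$, so the two vanishing manifold quantities produce \eqref{Eq:optimalb}--\eqref{Eq:optimalc}; closedness of $[\mathbb{S}_{m_i,n_i}]$ gives feasibility \eqref{Eq:optimald}; and the closed-form $\mathcal{L}$-update yields \eqref{Eq:optimalf} directly. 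For the two subdifferential inclusions, continuity of $\varphi$ and $\psi$ makes the function-value-convergence hypothesis in the definition of the limiting subdifferential automatic, so its outer semicontinuity carries the inclusions to the limit and produces \eqref{Eq:optimale} and, after an elementwise division by the strictly positive tensor $\mathcal{W}_{\mathbf{R}}$, \eqref{Eq:optimala}. The main obstacle I anticipate is bookkeeping rather than technique: identifying the iterate index at which each of $[\bar{H}_i]$, $[\bar{\mathcal{O}}]$, $[\bar{\mathcal{Y}}]$ is properly defined, tracking the weights $\mathcal{W}_{\mathbf{R}}$, $\sqrt{\mathcal{W}_{\mathbf{R}}}$, and $(\sqrt{\mathcal{W}_{\mathbf{R}}})^{-1}$ through the chain rule applied to $\varphi(\sqrt{\mathcal{W}_{\mathbf{R}}} \odot \cdot)$, and ensuring the scalar factors $\delta$, $\gamma$, $\alpha_{\mathcal{S}}$ align so the limit inclusion matches the stated form of \eqref{Eq:optimala}.
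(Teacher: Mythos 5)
Your proposal is correct and follows essentially the same route as the paper: extract a convergent subsequence, write the subproblem optimality conditions, use Proposition~\ref{Thm:XX} together with the monotone decrease of $\Phi$ to make the two Riemannian stationarity quantities vanish, and pass the subdifferential inclusions to the limit via continuity of $\varphi$ and $\psi$ and outer semicontinuity of the limiting subdifferential. The only differences are cosmetic (index bookkeeping and your explicit tracking of the factor $\tfrac{1}{2}$ in front of $H$, which the paper absorbs into the constant).
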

\begin{proof} Suppose that $\{\mathcal{Z}^{\tilde{k}}\}_{\tilde{k}\in\mathcal{K}}$ is a convergent subsequence of $\{\mathcal{Z}^k\}$ and converges to $\bar{\mathcal{Z}}$ as $\tilde{k}\in \mathcal{K}$ approaches $\infty$. By the updates of $\mathcal{S}^{\tilde{k}}$, $[\mathcal{G}^{\tilde{k}}]$ and $\mathcal{L}^{\tilde{k}}$, we have for any $\tilde{k}\in \mathcal{K}$,
\begin{align*}
	\alpha_{\mathcal{S}}(\mathcal{S}^{\tilde{k}-1}-\mathcal{S}^{\tilde{k}})+&(\mathcal{L}^{\tilde{k}}-\mathcal{L}^{\tilde{k}-1})\\
		&\in \mathcal{L}^{\tilde{k}}+\mathcal{S}^{\tilde{k}}-\mathcal{D} +\gamma \partial \varphi(\mathcal{S}^{\tilde{k}}),
\end{align*}
and
\begin{align*}
&\alpha_{\mathcal{G}}([\mathcal{G}^{\tilde{k}-1}]-[\mathcal{G}^{\tilde{k}}])+\delta([\mathcal{O}^{\tilde{k}}]-[\tilde{\mathcal{O}}^{\tilde{k}}])\in \delta([\mathcal{G}^{\tilde{k}}]-[\tilde{\mathcal{O}}^{\tilde{k}}])+\partial \psi ([\mathcal{G}^{\tilde{k}}]),\\
&{0}= \mathcal{L}^{\tilde{k}}+\mathcal{S}^{\tilde{k}}-\mathcal{D}+\delta \mathcal{W}_{\mathscr{R}}\odot\mathcal{L}^{\tilde{k}}-\delta\mathscr{R}^{\top}([\mathcal{Y}^{\tilde{k}}]),
\end{align*}
where $[\tilde{\mathcal{O}}^{\tilde{k}}]=\mathscr{R}(\mathcal{L}^{\tilde{k}})\times_1 [\boldsymbol{X}_1^{\tilde{k}}]^{\top}\times_2 [\boldsymbol{X}_2^{\tilde{k}}]^{\top}\times_3 [\boldsymbol{X}_3^{\tilde{k}}]^{\top}$. Since we have
\begin{align*}
    \|[\mathcal{O}^{\tilde{k}}]-[\tilde{\mathcal{O}}^{\tilde{k}}]\|_F&\leq \sqrt{c_2}\|[\boldsymbol{X}_1^{\tilde{k}}]\|_F\|[\boldsymbol{X}_2^{\tilde{k}}]\|_F\|[\boldsymbol{X}_3^{\tilde{k}}]\|_F\|\mathcal{L}^{\tilde{k}}-\mathcal{L}^{\tilde{k}-1}\|_F\\
    &\leq\sqrt{c_2 n_1n_2n_3N^3}\|\mathcal{L}^{\tilde{k}}-\mathcal{L}^{\tilde{k}-1}\|_F,
\end{align*}
it immediately follows from Corollary~\ref{Cor:L} that $\lim_{\tilde{k}\to\infty}\|[\mathcal{O}^{\tilde{k}}]-[\tilde{\mathcal{O}}^{\tilde{k}}]\|_F=0$.

According to the definition of limiting subdifferential and the fact that $\varphi$ and $\psi$ are continuous functions, we can take the limits of the relations above as  $\tilde{k}\in \mathcal{K}$ approaches $\infty$. Note that $\mathcal{Z}^{\tilde{k}}\to \bar{\mathcal{Z}}$, $[\tilde{\mathcal{O}}^{\tilde{k}}]\to[\bar{\mathcal{O}}]$  and $[\mathcal{Y}^{\tilde{k}}]\to [\bar{\mathcal{Y}}]$, as $\tilde{k}\in\mathcal{K}$ approaches $\infty$. Together using Theorem~\ref{Thm:IterateSeq} (iii) and Corollary~\ref{Cor:L}, we have \eqref{Eq:optimala}, \eqref{Eq:optimale} and \eqref{Eq:optimalf} hold.

Next, we show  \eqref{Eq:optimalb}-\eqref{Eq:optimald}  hold.
Using \eqref{eq:H_ineq1} and the inequalities in the proof for assertion (i) of Theorem~\ref{Thm:IterateSeq}, we have
\begin{equation*}
    \Phi(\mathcal{Z}^k)-\Phi(\mathcal{Z}^{k+1})
\ge c_3\sum_{i=1}^{3}\left\| ([\boldsymbol{I}_{m_{i}}]-[\boldsymbol{X}_i^{k}]{[\boldsymbol{X}_i^{k}]}^{\top})[\boldsymbol{H}_i^k]\right\|_F^2
\end{equation*}
and further obtain
 \begin{align*}
 \sum^{\infty}_{k=0}\sum_{i=1}^{3}\left\| ([\boldsymbol{I}_{m_{i}}]-[\boldsymbol{X}_i^{k}]{[\boldsymbol{X}_i^{k}]}^{\top})[\boldsymbol{H}_i^k]\right\|_F^2
 \le \frac{1}{c_3}\Phi(\mathcal{Z}^0).
\end{align*}
This implies
\begin{equation}\label{eq:H1}
    \lim_{k\to \infty} \|([\boldsymbol{I}_{m_{i}}]-[\boldsymbol{X}_i^{k}]{[\boldsymbol{X}_i^{k}]}^{\top})[\boldsymbol{H}_i^k]\|=0.
\end{equation} And by taking the limit of both sides of \eqref{eq:H_ineq2} and using Theorem~\ref{Thm:IterateSeq} (iii),  we have
\begin{equation}\label{eq:H2}
    \lim_{k\to \infty}\left\|[\boldsymbol{H}_i^k]^{\top}[\boldsymbol{X}_i^{k}]-{[\boldsymbol{X}_i^{k}]}^{\top}[\boldsymbol{H}_i^k]\right\|_F=0.
\end{equation}

Since $\mathcal{Z}^{\tilde{k}}\to \bar{\mathcal{Z}}$ as $\tilde{k}\in \mathcal{K}$ approaches $\infty$, we have $[\boldsymbol{P}_{i}^{\tilde{k}}]\to [\bar{\boldsymbol{P}}_{i}]$, $[\boldsymbol{Q}_1^{\tilde{k}}]\to[\bar{\boldsymbol{Q}}_{1}]$, as $\tilde{k}\in \mathcal{K}$ approaches $\infty$. Also, we have
\begin{align*}
    \|[\boldsymbol{Q}_2^{\tilde{k}}]-[\bar{\boldsymbol{Q}}_{2}]\|_F&\leq \|[\boldsymbol{Q}_2^{\tilde{k}}]-[\tilde{\boldsymbol{Q}}_2^{\tilde{k}}]\|_F+\| [\tilde{\boldsymbol{Q}}_2^{\tilde{k}}]-[\bar{\boldsymbol{Q}}_{2}]\|_F\\
    &\leq c_1 \|[\boldsymbol{X}_1^{\tilde{k}+1}]-[\boldsymbol{X}_1^{\tilde{k}}]\|_F+\| [\tilde{\boldsymbol{Q}}_2^{\tilde{k}}]-[\bar{\boldsymbol{Q}}_{2}]\|_F
\end{align*}
and
\begin{align*}
    \|[\boldsymbol{Q}_3^{\tilde{k}}]-[\bar{\boldsymbol{Q}}_{3}]\|_F&\leq \|[\boldsymbol{Q}_3^{\tilde{k}}]-[\tilde{\boldsymbol{Q}}_{3}^{\tilde{k}}]\|_F+\|[\tilde{\boldsymbol{Q}}_{3}^{\tilde{k}}]-[\bar{\boldsymbol{Q}}_3]\|_F\\
    &\leq c_1 \|[\boldsymbol{X}_1^{\tilde{k}+1}]-[\boldsymbol{X}_1^{\tilde{k}}]\|_F+c_1 \|[\boldsymbol{X}_2^{\tilde{k}+1}]-[\boldsymbol{X}_2^{\tilde{k}}]\|_F+\| [\tilde{\boldsymbol{Q}}_3^{\tilde{k}}]-[\bar{\boldsymbol{Q}}_{3}]\|_F,
\end{align*}
where $[\tilde{\boldsymbol{Q}}_2^{\tilde{k}}]=\big([\mathcal{G}^{\tilde{k}}]\times_1 [\boldsymbol{X}_1^{\tilde{k}}]\times_3 [\boldsymbol{X}^{\tilde{k}}_3]\big)_{[(2)]}$ and $[\tilde{\boldsymbol{Q}}_{3}^{\tilde{k}}]= \left([\mathcal{G}^{\tilde{k}}]\times_1 [\boldsymbol{X}^{\tilde{k}}_1]\times_2 [\boldsymbol{X}^{\tilde{k}}_2]\right)_{[(3)]}$. Since $[\tilde{\boldsymbol{Q}}_{i}^{\tilde{k}}]\to[\bar{\boldsymbol{Q}}_{i}]$, $i=2,3$, by Theorem~\ref{Thm:IterateSeq} (iii), we have $[\boldsymbol{Q}_i^{\tilde{k}}]\to[\bar{\boldsymbol{Q}}_{i}]$, $i=2,3$, as $\tilde{k}\in \mathcal{K}$ approaches $\infty$. Hence, $[\boldsymbol{H}_i^{\tilde{k}}]\to[\bar{\boldsymbol{H}}_{i}]$, $i=1,2,3$, as ${k}\in \mathcal{K}$ approaches $\infty$. By \eqref{eq:H1} and \eqref{eq:H2}, we have \eqref{Eq:optimalb} and \eqref{Eq:optimalc} hold.
Since Stiefel manifold is a compact set, we also have \eqref{Eq:optimald} holds.
\end{proof}

\section{Application to HSI Denoising and Destriping} \label{sec:application}

In this section, we present an application of the proposed model in~\eqref{model:HSI} to HSI denoising and destriping and utilize the proposed P-BCD method given in Algorithm~\ref{alg1} for solving the model.

To denoise the HSI via the low-rank tensor regularization, we extract global, local, and nonlocal tensors that may have low-rank features via the block extraction operator $\mathscr{R}$ and impose the low-rankness on the tensors. In the following, we first introduce the block extraction for global, local, and nonlocal tensors, i.e., the choices of $\mathscr{R}$, and the sparsity enhancement on low-rank regularization for removing Gaussian white noise, i.e., the choice of $\psi$. Then we introduce the tensor $\ell_{2,p}$ norm for removing sparse noise with linear structures, i.e., the choice of $\varphi$. Lastly, we propose the MLTL2p method using a multi-scale low-rank tensor regularization with a two-phase scheme to increase the effectiveness and efficiency of model~\eqref{model:HSI} and Algorithm~\ref{alg1}.

\subsection{Block extraction}
According to the spectral correlation and spatial nonlocal self-similarity of HSIs, a clean HSI can be approximated using global, local, and nonlocal low-rank tensors~\cite{LiuMultiplicative2020,maggioni2012nonlocal}. These tensors differ in their extraction operators $\mathscr{R}$, which we later denote as $\mathscr{R}_g$ for global, $\mathscr{R}_l$ for local, and $\mathscr{R}_{nl}$ for nonlocal.

We begin by defining several types of block extraction operators 
and their transpose operators. Given an HSI $\mathcal{L}\in\mathbb{R}^{I_{1}\times I_{2}\times I_{3}}$, we can divide it into a total number of $N$ overlapping or non-overlapping blocks of size $r_1\times r_2\times r_3$, $r_i\leq I_i$. We define the operator that extracts the $l$-th block as $\mathscr{B}_l:\mathbb{R}^{I_1\times I_2\times I_3}\to \mathbb{R}^{r_1\times r_2\times r_3}$
\begin{displaymath}
    \mathscr{B}_l(\mathcal{L}):=\mathcal{L}\times_1 (\boldsymbol{U}_{1}^{(l)})^{\top}\times_2 (\boldsymbol{U}_{2}^{(l)})^{\top}\times_3 (\boldsymbol{U}_{3}^{(l)})^{\top},
\end{displaymath}
where $\boldsymbol{U}_{i}^{(l)}\in\mathbb{R}^{I_i\times r_i}$ is a binary matrix such that $\mathscr{B}_l(\mathcal{L})$ is exactly the $l$-th local block of $\mathcal{L}$, and $(\boldsymbol{U}_{i}^{(l)})^{\top}\boldsymbol{U}_{i}^{(l)}=\mathbf{I}_{r_i}$. The transpose of $\mathscr{B}_l$ can be defined by $\mathscr{B}_l^{\top}:\mathbb{R}^{r_1\times r_2\times r_3}\to\mathbb{R}^{I_1\times I_2\times I_3}$
\begin{displaymath}
    \mathscr{B}_l^{\top}(\mathcal{Y}):=\mathcal{Y}\times_1 \boldsymbol{U}_{1}^{(l)}\times_2 \boldsymbol{U}_{2}^{(l)}\times_3 \boldsymbol{U}_{3}^{(l)},
\end{displaymath}
which satisfies 
\begin{displaymath}
	\langle \mathcal{Y},\mathscr{B}_l(\mathcal{L})\rangle_F=\langle \mathscr{B}^{\top}_l(\mathcal{Y}),\mathcal{L}\rangle_F,
\end{displaymath}
for any $\mathcal{L}\in \mathbb{R}^{I_1\times I_2\times I_3}$ and $\mathcal{Y}\in \mathbb{R}^{r_1\times r_2\times r_3}$.
For those $N$ independent blocks, we can stack them together and form a new operator
$\mathscr{B}:\mathbb{R}^{I_1\times I_2\times I_3}\to \mathbb{R}^{r_1\times r_2\times r_3\times N}$ such that $[\mathscr{B}(\mathcal{L})]^{(l)}=\mathscr{B}_l(\mathcal{L})$.

Depending on the size of the blocks, there are several types of block extraction as follows using the formulation of $\mathscr{B}$: 
\begin{itemize}
    \item {\bf Global extraction $\mathscr{R}_g$:} $\mathscr{B}$ with $r_i=I_i$, $i=1,2,3$;
    \item {\bf Local extraction $\mathscr{R}_l$:} $\mathscr{B}$ with $r_i<I_i$, $i=1,2,3$;
    \item {\bf Full band extraction $\mathscr{R}_f$:} $\mathscr{B}$ with $r_i<I_i$, $i=1,2$ and $r_3=I_3$.

\end{itemize}
The tensors formed by the above block extraction operators are based on the spectral correlation of HSIs. 

Next, we formulate another type of block extraction operators, denoted as $\mathscr{R}_{nl}$, based on spatial nonlocal self-similarity of HSIs. After dividing the HSI into $N$ blocks using $\mathscr{B}_l$, we apply block matching to find similar blocks and then stack them into a fourth order nonlocal similar tensor. 

Taking FBBs as an example, for the $j$-th FBB, we search within a local window for a total of $m_2$ FBBs that are mostly similar to the reference FBB based on Euclidean distance. Then the $j$-th nonlocal similar sub-tensor of order 3 of \(\mathcal{L}\), denoted as $\mathscr{R}_j(\mathcal{L})$, can be formed by unfolding all the nonlocal similar blocks in the $j$-th group and then stacking them together.  
We define the Casorati matrix (a matrix
whose columns are vectorized bands of the HSI) of the $l$-th FBB as follows
\begin{displaymath}
    \tilde{\boldsymbol{B}}_l:=\operatorname{reshape}(\mathscr{B}_l(\mathcal{L}),m_1,m_3),
\end{displaymath}
where  $m_1=r_1r_2$ and $m_3=I_3$.
Then the extraction operator of  the $j$-th nonlocal similar sub-tensor $\mathscr{R}_j:\mathbb{R}^{I_1\times I_2\times I_3}\to \mathbb{R}^{m_1\times m_2\times m_3}$ can be defined by
\begin{displaymath}
    \mathscr{R}_j(\mathcal{L}):=\operatorname{reshape}([\tilde{\boldsymbol{B}}_{l_{j1}}^{\top},\tilde{\boldsymbol{B}}_{l_{j2}}^{\top},\dots, \tilde{\boldsymbol{B}}_{l_{jm_2}}^{\top}]^{\top},m_1,m_2,m_3),
\end{displaymath}
where the indices $l_{j1}, l_{j1}, \dots, l_{jm_2}$ refer to the indices of FBBs that belongs to the $j$-th nonlocal similar group and $j=1,2,\dots, N$. 
An illustrative figure on nonlocal low-rank tensor extraction is shown in Fig.~\ref{Nonlocalregularization}. Then the transpose $\mathscr{R}^{\top}_j:\mathbb{R}^{m_1\times m_2\times m_3}\to \mathbb{R}^{I_1\times I_2\times I_3}$ is defined by
\begin{equation}\label{eq:RjT}
\mathscr{R}^{\top}_j(\mathcal{Y}):=\sum_{i=1}^{m_2}\mathscr{B}_{l_{ji}}^{\top}\left(\operatorname{reshape}(\boldsymbol{Y}_{:i:},r_1,r_2,I_3)\right),
\end{equation}
which satisfies $\langle \mathcal{Y},\mathscr{R}_j(\mathcal{L})\rangle_F=\sum_{i=1}^{m_2}\langle {Y}_{:i:},\mathscr{B}_{l_{ji}}(\mathcal{L})\rangle_F=\langle \mathscr{R}^{\top}_j(\mathcal{Y}),\mathcal{L}\rangle_F$.
Stacking $\mathscr{R}_j(\mathcal{L})$ into a fourth order nonlocal similar group tensor, the extraction operator $\mathscr{R}_{nl}:\mathbb{R}^{I_1\times I_2\times I_3}\to \mathbb{R}^{m_1\times m_2\times m_3\times N}$ is defined as a linear map such that $[\mathscr{R}_{nl}(\mathcal{L})]^{(j)}=\mathscr{R}_j(\mathcal{L})$.
\begin{figure}[h]
\centering
\includegraphics[width=\textwidth]{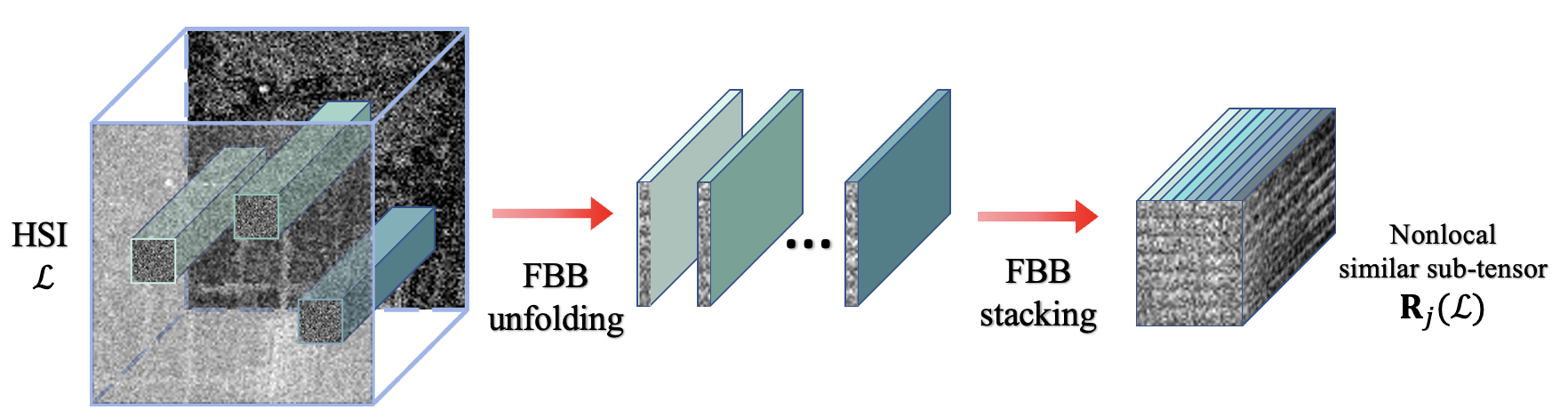} %
\caption{The procedure of block matching.}
\label{Nonlocalregularization}
\end{figure}

Note that the $\mathscr{R}$ formulated above all satisfy the following equality
\begin{align*}
\mathcal{W}_{\mathscr{R}}\odot \operatorname{Id}=\mathscr{R}^{\top}\mathscr{R},
\end{align*}
where $\odot$ represents the pointwise multiplication, and each entry of $\mathcal{W}_{\mathscr{R}}\in\mathbb{R}^{I_{1}\times I_{2}\times I_{3}}$ represents the number of blocks to which the corresponding pixel belongs. For $\mathscr{R}_{g}$, $\mathscr{R}_{l}$ and $\mathscr{R}_{f}$, the weight tensor can be defined by
\begin{displaymath}
\mathcal{W}_{\mathscr{R}}\odot \operatorname{Id}=\sum_{l=1}^N\mathscr{B}_l^{\top}\mathscr{B}_l,
\end{displaymath}
and for $\mathscr{R}_{nl}$, the weight tensor can be defined by
\begin{displaymath}
\mathcal{W}_{\mathscr{R}}\odot \operatorname{Id}=\sum_{j=1}^N\mathscr{R}_j^{\top}\mathscr{R}_j.
\end{displaymath}
 Since we assume that each pixel belongs to at least one low-rank group, we have $\mathcal{W}_{\mathscr{R}}\in\mathbb{R}^{I_1\times I_2\times I_3}_{++}$.

\subsection{Sparsity enhanced low-rank tensor regularization}

Taking $\mathscr{R}_{nl}$ as an example, in the $\mathscr{R}_{nl}(\mathcal{L})$ that we construct, the first dimension indicates the spatial information, the second dimension reveals the nonlocal self-similarity, the third dimension reflects the spectral correlation, and the fourth dimension shows no correlation. Hence, we adopt the independent 3-D HOSVD to obtain a low-rank approximation of $\mathscr{R}(\mathcal{L})$, that is,
\begin{displaymath}
    \mathscr{R}(\mathcal{L})\approx\mathcal[\mathcal{G}]\times_1 [\boldsymbol{X}_1]\times_2 [\boldsymbol{X}_2]\times_3 [\boldsymbol{X}_3],
\end{displaymath}
where $[\mathcal{G}]\in \mathbb{R}^{n_1\times n_2\times n_3\times N}$ denotes independent core tensors, and $[\boldsymbol{X}_i]\in\mathbb{R}^{m_{i}\times n_{i}\times N}$ denotes the $i$-th factor matrices such that $[\boldsymbol{X}_i]^{\top}[\boldsymbol{X}_i]=[\boldsymbol{I}_{n_i}]$.

To further boost the low-rankness of $\mathscr{R}(\mathcal{L})$, in additional to restricting the core tensors to a unified size,  we propose a sparsity-enhanced nonlocal low-rank tensor regularization term  as follows
\begin{align}\label{Eq:SparseEnhance}
\frac{1}{2}\|\mathcal[\mathcal{G}]\times_1 [\boldsymbol{X}_1]\times_2 [\boldsymbol{X}_2]\times_3 [\boldsymbol{X}_3]-\mathscr{R}(\mathcal{L})\|^{2}_{F}+\|[\mathcal{G}]\|_{1,\boldsymbol{w}},
\end{align}
where $\|[\mathcal{G}]\|_{1,\boldsymbol{w}}$ is given in~\eqref{eq:Weightedl1Norm} with $w\in\mathbb{R}^N_{++}$.
In particular, the first term of~\eqref{Eq:SparseEnhance} measures the closeness between $\mathscr{R}(\mathcal{L})$ and the approximated low-rank tensor, and the second term measures the sparsity of the independent core tensors $[\mathcal{G}]$. Using the proximal operator of the $\ell_1$ norm, the proximal operator of $\|\cdot\|_{1,\boldsymbol{w}}$ can be computed component-wisely by the soft thresholding operator as follows
 \begin{align*}
\left(\operatorname{prox}_{\|\cdot\|_{1,\boldsymbol{w}}}([\mathcal{G}])\right)_{i_1i_2i_3}^{(j)}& = \operatorname{prox}_{w_j |\cdot|}\left([\mathcal{G}]^{(j)}_{i_1i_2i_3}\right)\\
&=\operatorname{sign}\left([\mathcal{G}]^{(j)}_{i_1i_2i_3}\right)\max\left(\left|[\mathcal{G}]^{(j)}_{i_1i_2i_3}\right|-w_j,0\right).
\end{align*}

\subsection{Tensor $\ell_{2,p}$ norm for group sparsity} 

To remove stripes in HSIs, we measure the linear structural sparsity of the sparse noise tensor $\mathcal{S}$ using the tensor $\ell_{2,p}$ norm. The tensor $\ell_{2,p}$ norm for group sparsity is extended from the matrix $\ell_{2,p}$ norm, which has been applied to image processing~\cite{li2017robust},  machine learning~\cite{ding2021laplacian,ma2017l2p}, feature selection~\cite{li2021sparse,wang2013l_}, multi-view classification~\cite{wang2022block}, etc.
As the stripes and dead lines often align the first dimension, we define the tensor  $\ell_{2,p}$ $(0<p<1)$ norm in the form of~\eqref{eq:L2pNorm}, which is exactly the matrix $\ell_{2,p}$ norm of the unfolding matrix along the first dimension.

In the following, we summarize some results for solving the tensor $\ell_{2,p}$ norm minimization problem
 \begin{equation}\label{eq:L2pmin}
	\min_{\mathcal{S}}\mu\|\mathcal{S}\|^{p}_{2,p}+\frac{1}{2}\|\mathcal{S}-\widetilde{\mathcal{S}}\|^{2}_{F},
\end{equation}
 where $\widetilde{\mathcal{S}}\in \mathbb{R}^{I_1\times I_2\times I_3}$ is a given tensor and parameter $\mu>0$.
Since $\|\cdot\|^{p}_{2,p}$ and $\|\cdot\|^{2}_{F}$ are both group-separable, solving problem \eqref{eq:L2pmin} for $\mathcal{S}$ is equivalent to solving the following subproblem for each $(i_2,i_3)$-th vector of $\mathcal{S}$ along the first dimension
 \begin{equation}\label{L2p}
	\min_{\boldsymbol{s}}\mu\|\boldsymbol{s}\|^{p}_{2}+\frac{1}{2}\|\boldsymbol{s}-\tilde{\boldsymbol{s}}\|^{2}_{2},
\end{equation}
where $\boldsymbol{s}\in \mathbb{R}^{I_1}$ and $\tilde{\boldsymbol{s}}\in \mathbb{R}^{I_1}$, for simplicity, represent $\boldsymbol{s}_{:i_2i_3}$ and $\tilde{\boldsymbol{s}}_{:i_2i_3}$, respectively, and $\|\boldsymbol{s}\|^p_2=({s}_{1}^{2}+{s}_{2}^{2}+\cdots+{s}_{I_1}^{2})^{\frac{p}{2}}$. It follows from the triangle inequality that the objective function of \eqref{L2p} satisfies the following inequality for any $s\in \mathbb{R}^{I_1}$
\begin{equation}\label{ineq:s2p}
\mu\|\boldsymbol{s}\|^{p}_{2}+\frac{1}{2}\|\boldsymbol{s}-\tilde{\boldsymbol{s}}\|^{2}_{2}\geq \mu\|\boldsymbol{s}\|^{p}_{2}+\frac{1}{2}(\|\boldsymbol{s}\|_2-\|\tilde{\boldsymbol{s}}\|_2)^{2}.
 \end{equation}
And the equality holds if and only if $\boldsymbol{s}=t\tilde{\boldsymbol{s}}$ for some $t\geq 0$ or $\tilde{\boldsymbol{s}}=0$. Observe that the right-hand side of the inequality is only related to $\|\boldsymbol{s}\|_2$ and $\|\tilde{\boldsymbol{s}}\|_2$. If $\tilde{\boldsymbol{s}}=0$, the  solution of problem \eqref{L2p} is $\boldsymbol{s}=\boldsymbol{0}$. If $\tilde{\boldsymbol{s}}\ne \boldsymbol{0}$, we can view $s$ as $s=t\|\tilde{\boldsymbol{s}}\|_2\boldsymbol{v}$ with $t\geq 0$ being a scalar and $v\in\mathbb{R}^{I_1}$ being a unit vector. When we restrict the minimization problem \eqref{L2p} by $\|\boldsymbol{s}\|_2=t\|\tilde{\boldsymbol{s}}\|_2$ with a fixed $t$, according to~\eqref{ineq:s2p}, the solution of the restricted problem of \eqref{L2p} is obtained only when $\boldsymbol{v}=\frac{\tilde{\boldsymbol{s}}}{\|\tilde{\boldsymbol{s}}\|_2}$. Hence, if $\tilde{\boldsymbol{s}}\ne \boldsymbol{0}$, the solution of \eqref{L2p} is $\boldsymbol{s}=t\tilde{\boldsymbol{s}}$, where $t$ is a minimizer of the following problem
\begin{equation}\label{t}
\min_{t\in[0,\infty)} \nu t^{p}+\frac{1}{2}(t-1)^2,
\end{equation}
with $\nu=\mu\|\tilde{\boldsymbol{s}}\|_2^{p-2}$. That is, it only requires to solve a one-dimensional problem~\eqref{t} for computing the solutions of problem~\eqref{L2p}.

Next, we show a lemma and a solver for computing solutions of problem~\eqref{L2p}. Let $g(t)=\nu |t|^{p}+\frac{1}{2}(t-1)^2$. Note that $g(t)>g(|t|)$ for $\forall t<0$ and $g(t)>g(1)$ for $\forall t>1$. Then problem~\eqref{t} can be relaxed to an unconstrained problem with $g$ being the objective function, which can be solved using Theorem 1 in \cite{6262492}. Also, problem~\eqref{t} can be reduced to a box constrained problem with constraint $t\in [0,1]$, which can be solved using Lemma 4.1 in \cite{10Chen069}. We summarize the results for ~\eqref{t} in the following lemma.
\begin{lemma}\label{lem:proxl2p} Let $p\in(0,1)$ and $\nu>0$. Let
\begin{displaymath}
\nu_{0}:=\frac{[2 (1-p)]^{1-p}}{(2-p)^{2-p}}, \quad\text{ and }\quad
 \tau(\nu):=[2\nu(1-p)]^{\frac{1}{2-p}}.
\end{displaymath}
Then the set of optimal solutions of problem (\ref{t}), denoted as  $\Omega^{*}(\nu)$, is given by
\begin{displaymath}
    \Omega^{*}(\nu)=\begin{cases}
        \{0\},\quad&\mbox{if }\nu >\nu_0;\\
        \{0,\tau(\nu_0)\},\quad &\mbox{if }\nu=\nu_0;\\
        \{t^*\},\quad&\mbox{if }0<\nu<\nu_0,
    \end{cases}
\end{displaymath}
where $t^*\in(\tau(\nu),1)$ is the unique solution of the equation
\begin{equation}
 \label{roott}
    \nu pt^{p-1}+t-1=0
\end{equation}
with $t\in (\tau(\nu),\infty)$.
\end{lemma}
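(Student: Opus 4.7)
The plan is to analyze the one-dimensional function $g(t) := \nu t^{p} + \tfrac{1}{2}(t-1)^{2}$ on $[0,\infty)$ directly. First I would note that $g(0)=1/2$, $g(t)\to\infty$ as $t\to\infty$, and $g(t) > g(1) = \nu$ for $t>1$ (since both $t^{p}>1$ and $(t-1)^{2}>0$), so minimizers exist and in fact lie in $[0,1]$. The key structural fact comes from $g'(t) = \nu p t^{p-1} + t - 1$ and $g''(t) = 1 - \nu p(1-p) t^{p-2}$: the function $g''$ is strictly increasing on $(0,\infty)$, going from $-\infty$ to $1$, so it has a unique zero $t_{c} := [\nu p(1-p)]^{1/(2-p)}$, making $g$ strictly concave on $(0,t_{c})$ and strictly convex on $(t_{c},\infty)$, while $g'$ attains its unique minimum at $t_{c}$ with $g'(0^{+}) = g'(\infty) = +\infty$.

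This splits the analysis into two regimes. If $g'(t_{c}) \geq 0$, then $g$ is monotone nondecreasing and the unique minimizer is $t=0$. If $g'(t_{c}) < 0$, then $g'$ has exactly two positive roots $\tau_{1} < t_{c} < \tau_{2}$; the point $\tau_{2}$ is the only interior local minimum, and the global minimum over $[0,\infty)$ equals $\min\{g(0), g(\tau_{2})\}$. The global minimizer is therefore determined by the comparison $g(\tau_{2})$ versus $1/2$.

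The crux is identifying the threshold $\nu_{0}$ at which $g(\tau_{2}) = g(0) = 1/2$. Using $g'(\tau_{2})=0$ to substitute $\nu \tau_{2}^{\,p} = \tau_{2}(1-\tau_{2})/p$ into $g(\tau_{2}) = 1/2$, the algebra collapses via $1 - (\tau_{2}-1)^{2} = \tau_{2}(2-\tau_{2})$ to $2(1-\tau_{2})/p = 2 - \tau_{2}$, yielding $\tau_{2} = \tfrac{2(1-p)}{2-p}$. Back-substitution into $\nu p \tau_{2}^{\,p-1} = 1 - \tau_{2} = p/(2-p)$ gives $\nu_{0} = \tfrac{[2(1-p)]^{1-p}}{(2-p)^{2-p}}$, and one checks $\tau(\nu_{0}) = \tfrac{2(1-p)}{2-p}$ by direct substitution into $\tau(\nu)^{2-p} = 2\nu(1-p)$. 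A monotonicity-in-$\nu$ argument (implicit differentiation gives $d\tau_{2}/d\nu = -p\tau_{2}^{\,p-1}/g''(\tau_{2}) < 0$, and the envelope theorem gives $dg(\tau_{2})/d\nu = \tau_{2}^{\,p} > 0$) then delivers the three-case split: $\{0\}$ for $\nu > \nu_{0}$, $\{0, \tau(\nu_{0})\}$ for $\nu = \nu_{0}$, and the singleton $\{\tau_{2}\}$ for $0 < \nu < \nu_{0}$.

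Finally, to identify $\tau_{2}$ with the unique root of \eqref{roott} in $(\tau(\nu),1)$, I would compute
\[
g'(\tau(\nu)) \;=\; \tau(\nu)\cdot\frac{2-p}{2(1-p)} \;-\; 1,
\]
which is strictly negative iff $\tau(\nu) < \tfrac{2(1-p)}{2-p}$, i.e.\ iff $\nu < \nu_{0}$. Since $\tau(\nu)/t_{c} = (2/p)^{1/(2-p)} > 1$, one has $\tau(\nu) > t_{c}$, so on $(\tau(\nu),\infty) \subset (t_{c},\infty)$ the derivative $g'$ is strictly increasing from a negative value to $+\infty$, giving a single root; it must coincide with $\tau_{2}$, and $\tau_{2} < 1$ follows from $g'(1) = \nu p > 0$. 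The main obstacle is the threshold calculation itself: the coupled equations $g(\tau_{2}) = 1/2$ and $g'(\tau_{2}) = 0$ have to be combined in just the right order for the clean closed forms $\nu_{0} = [2(1-p)]^{1-p}/(2-p)^{2-p}$ and $\tau(\nu_{0}) = \tau_{2}$ to fall out automatically; the rest is routine convex-concave bookkeeping.
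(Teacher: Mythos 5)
Your argument is correct and complete, and I verified the key computations: the second derivative $g''(t)=1-\nu p(1-p)t^{p-2}$ is indeed strictly increasing with unique zero $t_c=[\nu p(1-p)]^{1/(2-p)}$; eliminating $\nu\tau_2^p$ between $g'(\tau_2)=0$ and $g(\tau_2)=\tfrac12$ does collapse to $\tau_2=\tfrac{2(1-p)}{2-p}$ and hence to $\nu_0=[2(1-p)]^{1-p}/(2-p)^{2-p}$ with $\tau(\nu_0)=\tau_2$; and $g'(\tau(\nu))=\tau(\nu)\tfrac{2-p}{2(1-p)}-1$ together with $\tau(\nu)>t_c$ correctly places the unique root of \eqref{roott} in $(\tau(\nu),1)$ exactly when $\nu<\nu_0$. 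The route is genuinely different from the paper's, though: the paper does not prove the lemma at all, but instead reduces problem \eqref{t} to an unconstrained (resp.\ box-constrained) scalar problem and invokes Theorem 1 of \cite{6262492} and Lemma 4.1 of \cite{10Chen069}, which contain essentially the concave--convex analysis you carry out by hand. What your version buys is a self-contained, elementary proof that also makes the threshold mechanism transparent (the envelope computation $\frac{d}{d\nu}g(\tau_2(\nu))=\tau_2^p>0$ cleanly explains why the crossing at $\nu_0$ is unique); what the paper's citation buys is brevity and consistency with the established $\ell_p$-thresholding literature. The only points worth making explicit if you wrote this up are (i) that the case $g'(t_c)\ge 0$ (no interior critical point) is subsumed in the $\nu>\nu_0$ branch, and (ii) that $\nu_0$ lies inside the parameter range where $\tau_2$ exists, which your identity $\tau_2(\nu_0)=\tau(\nu_0)>t_c(\nu_0)$ already certifies.
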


According to Lemma~\ref{lem:proxl2p}, when $\nu=\nu_0$, there are two minimizers for problem~\eqref{t}. For simplicity, we will choose $0$ in this case. When $\nu \in (0,\nu_0)$, the minimizer of problem~\eqref{t} is unique and can obtained by solving \eqref{roott}. If $p$ is chosen as, for example, $p=1/2$, \eqref{roott} has a closed-form root. Otherwise, we estimate the unique root $t^{*}$ by Newton’s method with an initial value of $t_0=(\tau(\nu) + 1)/2$. Altogether, we summarize a proximal operator of the tensor $\ell_{2,p}$ norm in the following theorem.

\begin{theorem}\label{thm:proxl2p} Let $p\in (0,1)$ and $\mu>0$. Define the operator $\Gamma_{\mu}:\mathbb{R}\to \mathbb{R}$ by
	\begin{displaymath}
		\Gamma_{\mu}(\beta):=\begin{cases}0,\quad&\text{if }\beta\leq \frac{\beta_0(2-p)}{2(1-p)};\\
			t^*,\quad&\text{otherwise},\end{cases}
	\end{displaymath}
	where $\beta_0=[2\mu(1-p)]^{\frac{1}{2-p}}$, and $t^*\in\left[\beta_0\beta,1\right)$ is the unique solution of
	\begin{equation*}
		\mu \beta^{p-2}pt^{p-1}+t-1=0,\quad  t\in\left[\beta_0\beta,\infty\right).
	\end{equation*}
	Then a solution of the proximal operator of the tensor $\ell_{2,p}$ norm at $\widetilde{\mathcal{S}}\in\mathbb{R}^{I_1\times I_2\times I_3}$ can be computed by
	\begin{displaymath}
		\Gamma_{\mu}(\|\tilde{\boldsymbol{s}}_{:i_2i_3}\|_2)\tilde{\boldsymbol{s}}_{:i_2i_3}\in \operatorname{prox}_{\mu\|\cdot\|_2^p}(\tilde{\boldsymbol{s}}_{:i_2i_3})=\left(\operatorname{prox}_{\mu\|\cdot\|_{2,p}^p}(\widetilde{\mathcal{S}})\right)_{:i_2i_3},
	\end{displaymath}
	for $i_2=1,2,\dots,I_2$, $i_3=1,2,\dots,I_3$.
	
\end{theorem}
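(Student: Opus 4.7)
The plan is to prove the theorem by two successive reductions: first separate the tensor proximal operator into scalar-free vector proximal operators, then reduce each vector subproblem to the one-dimensional problem~\eqref{t} that has already been characterized by Lemma~\ref{lem:proxl2p}. The claim of the theorem is essentially a translation of Lemma~\ref{lem:proxl2p} back into the variables $\widetilde{\mathcal{S}}$ and $\beta = \|\tilde s_{:i_2i_3}\|_2$, so the bulk of the work is organizational rather than analytical.

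First I would observe that both $\|\cdot\|_{2,p}^{p}$ and $\|\cdot -\widetilde{\mathcal{S}}\|_F^{2}$ are group-separable along mode-$1$ fibers, namely
\[
\mu\|\mathcal{S}\|_{2,p}^{p}+\tfrac{1}{2}\|\mathcal{S}-\widetilde{\mathcal{S}}\|_F^{2}
=\sum_{i_2,i_3}\Big(\mu\|s_{:i_2i_3}\|_2^{p}+\tfrac{1}{2}\|s_{:i_2i_3}-\tilde s_{:i_2i_3}\|_2^{2}\Big).
\]
Hence $\operatorname{prox}_{\mu\|\cdot\|_{2,p}^{p}}(\widetilde{\mathcal{S}})$ can be computed fiber-wise, reducing the problem to $\operatorname{prox}_{\mu\|\cdot\|_2^{p}}(\tilde s)$ for each $\tilde s\in\mathbb{R}^{I_1}$, where I drop the indices $i_2,i_3$ for brevity.

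Next I would handle the vector subproblem~\eqref{L2p}. If $\tilde s=0$ the unique minimizer is $s=0$. For $\tilde s\neq 0$ I use the triangle inequality~\eqref{ineq:s2p}: the objective is bounded below by $\mu\|s\|_2^{p}+\tfrac{1}{2}(\|s\|_2-\|\tilde s\|_2)^{2}$, which depends only on $\|s\|_2$, with equality iff $s=t\tilde s$ for some $t\ge 0$. So every minimizer has the form $s=t\tilde s$, and substituting yields the 1-D problem~\eqref{t} with the parameter identification $\nu=\mu\|\tilde s\|_2^{p-2}$.

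Finally I would apply Lemma~\ref{lem:proxl2p} to~\eqref{t} and translate the thresholds from $\nu$ to $\beta := \|\tilde s\|_2$. A direct computation shows $\tau(\nu)=\beta_0/\beta$ (using the identity $(p-2)/(2-p)=-1$), and the critical value $\nu=\nu_0$ rearranges to $\beta = \beta_0(2-p)/[2(1-p)]$, producing the thresholding case of $\Gamma_\mu$. In the nontrivial regime $\nu<\nu_0$, Lemma~\ref{lem:proxl2p} delivers the unique root $t^\ast$ of $\nu p t^{p-1}+t-1=0$, which under the substitution becomes $\mu\beta^{p-2}pt^{p-1}+t-1=0$ as stated. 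Combining $s=t^{\ast}\tilde s$ across all fibers gives the announced formula. The one mildly delicate step is the bookkeeping at $\nu=\nu_0$ where Lemma~\ref{lem:proxl2p} gives two minimizers $\{0,\tau(\nu_0)\}$; here one must adopt the convention (chosen in the paper) of assigning $\Gamma_\mu$ the value $0$ to make $\operatorname{prox}$ single-valued, which is consistent with the inequality $\beta \leq \beta_0(2-p)/[2(1-p)]$ being non-strict. Apart from this convention, everything else is routine algebraic substitution.
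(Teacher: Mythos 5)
Your proposal is correct and takes essentially the same route as the paper, which obtains the theorem from the fiber-wise separability of the objective, the triangle-inequality reduction~\eqref{ineq:s2p} to the one-dimensional problem~\eqref{t} with $\nu=\mu\|\tilde s\|_2^{p-2}$, and Lemma~\ref{lem:proxl2p}. Your computation $\tau(\nu)=\beta_0/\beta$ is indeed the correct lower endpoint of the interval containing $t^*$ (the $\beta_0\beta$ appearing in the theorem statement is evidently a typo for $\beta_0/\beta$), and your convention of selecting $0$ at the boundary case $\nu=\nu_0$ matches the paper's.
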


By applying Theorem~\ref{thm:proxl2p}, the update of $\mathcal{S}$ given in~\eqref{eq:Supdate} can be efficiently computed. In particular, we calculate the $(i_2,i_3)$-th mode-$1$ fiber of $\mathcal{S}^{k+1}$ by
	\begin{displaymath}
		\boldsymbol{s}_{:i_2i_3}^{k+1}=\Gamma_{\tilde{\gamma}}\left(\left\| \tilde{\boldsymbol{s}}^{k}_{:i_2i_3}\right\|_2\right)\tilde{\boldsymbol{s}}^{k}_{:i_2i_3},
	\end{displaymath}
where  $\tilde{\mathcal{S}}^k= \mathcal{S}^k-\tilde{\alpha}_{\mathcal{S}}\left(\mathcal{S}^k+\mathcal{L}^k- \mathcal{D}\right)$, and $\tilde{\boldsymbol{s}}^{k}_{:i_2i_3}$ is the $(i_2,i_3)$-th mode-$1$ fibers of $\tilde{\mathcal{S}}^k$.

\subsection{Two-phase MLTL2p method}
We extend model~\eqref{model:HSI} to a multi-scale low-rank model and apply the P-BCD method in a two-phase fashion for HSI denoising and destriping.

The spectral correlation and spatial nonlocal self-similarity in HSIs imply multi-scale low-rankness, which includes global low-rankness, local block low-rankness, and nonlocal self-similar tensor low-rankness. In the following, we propose a multi-scale low-rank model that imposes low-rankness on global, local, and nonlocal tensors extracted via $\mathscr{R}_g$, $\mathscr{R}_l$, and $\mathscr{R}_{nl}$, respectively.
\begin{equation}\label{model:HSI_new}
\begin{split}
\min_{\substack{[\boldsymbol{X}_i]_g,[\boldsymbol{X}_i]_l,[\boldsymbol{X}_i]_{nl},\\
[\mathcal{G}]_g,[\mathcal{G}]_l,[\mathcal{G}]_{nl},\mathcal{S},\mathcal{L}} } \quad
 &\frac{1}{2} \|\mathcal{L}+\mathcal{S}-\mathcal{D}\|_{F}^2+\gamma \|\mathcal{S}\|^{p}_{2,p}\\
+&\|[\mathcal{G}]_g\|_{1,\boldsymbol{w}}
+ \frac{\delta_g}{2}\|\mathscr{R}_g(\mathcal{L})- [\mathcal{G}]_g\times
_{1}[\boldsymbol{X}_1]_g\times_{2}[\boldsymbol{X}_2]_g\times_{3}[\boldsymbol{X}_3]_g\|^{2}_{F}\\
+&\|[\mathcal{G}]_l\|_{1,\boldsymbol{w}}
+ \frac{\delta_l}{2}\|\mathscr{R}_l(\mathcal{L})- [\mathcal{G}]_l\times
_{1}[\boldsymbol{X}_1]_l\times_{2}[\boldsymbol{X}_2]_l\times_{3}[\boldsymbol{X}_3]_l\|^{2}_{F}\\
+&\|[\mathcal{G}]_{nl}\|_{1,\boldsymbol{w}}
+ \frac{\delta_{nl}}{2}\|\mathscr{R}_{nl}(\mathcal{L})- [\mathcal{G}]_{nl}\times
_{1}[\boldsymbol{X}_1]_{nl}\times_{2}[\boldsymbol{X}_2]_{nl}\times_{3}[\boldsymbol{X}_3]_{nl}\|^{2}_{F}\\
\mbox{s.t. }~~& [\boldsymbol{X}_i]_g^{\top}[\boldsymbol{X}_i]_g=[\boldsymbol{I}_{n_i}], \quad i=1,2,3,\\
& [\boldsymbol{X}_i]_l^{\top}[\boldsymbol{X}_i]_l=[\mathbf{I}_{n_i}], \quad i=1,2,3,\\
& [\boldsymbol{X}_i]_{nl}^{\top}[\boldsymbol{X}_i]_{nl}=[\boldsymbol{I}_{n_i}], \quad i=1,2,3.
\end{split}
\end{equation}
And the P-BCD method for solving model~\eqref{model:HSI_new} is presented in Algorithm~\ref{alg2}.
\begin{algorithm}[h]
		\renewcommand{\algorithmicensure}{\textbf{Output:}}
	\caption{P-BCD algorithm for model~\eqref{model:HSI_new}}
	\label{alg2}
	\begin{algorithmic}[1]
		\STATE Initialize $(\mathcal{S}^0,[\boldsymbol{X}_i^0]_g,[\boldsymbol{X}_i^0]_l,[\boldsymbol{X}_{i}^0]_{nl},[\mathcal{G}^0]_g,[\mathcal{G}^0]_l,[\mathcal{G}^0]_{nl},\mathcal{L}^0)$ 
        
        with \([\boldsymbol{X}_i^0]_g,[\boldsymbol{X}_i^0]_l,[\boldsymbol{X}_{i}^0]_{nl}\) being independently orthogonal;
  \STATE Set the tensor extraction operators $\mathscr{R}_g$, $\mathscr{R}_l$, and $\mathscr{R}_{nl}$;
		\STATE  Set parameters $\alpha_{\mathcal{S}}, \alpha_{\boldsymbol{X}}, \alpha_{\mathcal{G}}>0$;
  \STATE Set $k=0$.
		\REPEAT
		\STATE Compute $\mathcal{S}^{k+1}$ by \eqref{eq:Supdate};
  \STATE Compute $[\boldsymbol{X}_i^{k+1}]_g,[\boldsymbol{X}_i^{k+1}]_l,[\boldsymbol{X}_{i}^{k+1}]_{nl}$ by \eqref{eq:Xupdate},  $i=1,2,3$;
  \STATE Compute $[\mathcal{G}^{k+1}]_g,[\mathcal{G}^{k+1}]_l,[\mathcal{G}^{k+1}]_{nl}$ by \eqref{eq:Gupdate};
\STATE Compute $\mathcal{L}^{k+1}$ by 
\begin{align*}
    \mathcal{L}^{k+1}= &\left(\delta_g\mathcal{W}_{\mathscr{R}_g}+\delta_l\mathcal{W}_{\mathscr{R}_l}+\delta_{nl}\mathcal{W}_{\mathscr{R}_{nl}}+\mathcal{I}\right)^{-1}\\
    &\odot\Big(\delta_g\mathscr{R}_g^{\top}([\mathcal{Y}^{k+1}]_g) +\delta_l\mathscr{R}_l^{\top}([\mathcal{Y}^{k+1}]_l)\\
    &+\delta_{nl}\mathscr{R}_{nl}^{\top}([\mathcal{Y}^{k+1}]_{nl}) +(\mathcal{D}-\mathcal{S}^{k+1})\Big);
\end{align*}
		\STATE $k\leftarrow k+1$.
		\UNTIL the stopping criterion is met.
		\ENSURE $(\mathcal{S}^k,[\boldsymbol{X}_i^k]_g,[\boldsymbol{X}_i^l]_l,[\boldsymbol{X}_{i}^k]_{nl},[\mathcal{G}^k]_g,[\mathcal{G}^k]_l,[\mathcal{G}^k]_{nl},\mathcal{L}^k)$ . 
        \end{algorithmic}
\end{algorithm}

To enhance the efficiency and effectiveness of the MLTL2p method, we implement a two-phase scheme on the multi-scale low-rank model and the P-BCD method. In the first phase, we employ only the global and local low-rank regularization, which is sufficient for identifying stripes and dead lines. In particular, we use Algorithm~\ref{alg2} to solve model~\eqref{model:HSI_new} with the parameter $\delta_{nl}$ set to zero. In the second phase, we utilize the nonlocal low-rank regularization along with the global and local low-rank regularization. As the estimated HSI from the first phase has already removed most of the noise, we can  perform only one block matching to obtain a fixed nonlocal block extraction $\mathscr{R}_{nl}$, which can significantly reduce the computational cost. By applying Algorithm~\ref{alg2} to solve model~\eqref{model:HSI_new} in this phase, the algorithm can refine additional details in the HSI, thereby improving the quality of the restored HSI.

Lastly, we comment on some situations in HSI denoising and destriping. If Poisson noise exists in HSIs, one may conduct the Anscombe transformation~\cite{makitalo2010optimal} and then apply the MLTL2p method. Also, if the HSIs are severely contaminated by the dead lines or stripes, one may apply the median method or the midway equalization method~\cite{tendero2012non}, respectively, for preprocessing.


\section{Experimental Results}\label{sec:experiments}

In this section, we conduct numerical experiments for removing mixed noise in HSIs. We summarize the comparison of methods for HSI denoising and destriping in Table~\ref{table_new}.  Among them, we select the methods that utilize tensor representation for competing. These include BM4D~\cite{maggioni2012nonlocal} with FGLR~\cite{su2023fast} as a preprocessing step to remove stripes,  as well as LRTD~\cite{chen2018destriping}, SNLRSF~\cite{cao2019hyperspectral}, LRTFL0~\cite{xiong2019hyperspectral},  and the data-driven method QRNN3D~\cite{wei20203}. All numerical experiments are implemented in Matlab R2018a and executed on a personal desktop (Intel Core i7 9750H at 2.60 GHz with 16 GB RAM).

{\small\begin{table}[h]
	\centering
	\setlength{\tabcolsep}{4pt}
	\caption{Comparison summary of the methods for HSI denoising and destriping.}\label{table_new}
	\begin{tabular}{l|c|c|c|c|l|l}
		\hline 
		\multirow{2}{*}{\textbf{Methods}} & \multirow{2}{*}{\textbf{Form}} & \multicolumn{3}{c|}{\textbf{Address Correlations?}} & \multirow{2}{*}{\textbf{Stripe}} & \multirow{2}{*}{\textbf{Low-rank}}\\
		\cline{3-5}
		&  & \textbf{Global} & \textbf{Local}  & \scalebox{0.9}[1]{\textbf{Nonlocal}} & &\\
		\hline
		\textbf{BM4D}~\cite{maggioni2012nonlocal} & Tensor  & No& No & Yes & No&Hard-thresholding\\
		\textbf{FGLR}~\cite{su2023fast}  & Matrix & Yes&No & No& $\|\cdot\|_*$&Graph Laplacian\\	\textbf{LRMR}~\cite{zhang2013hyperspectral} & Matrix  & No& Yes & No & $\|\cdot\|_1$&Go Decomposition\\
		\textbf{DLR}~\cite{zhang2021double} & Matrix  & Yes& No & No & $\|\cdot\|_*$& $\|\cdot\|_*$\\
		\scalebox{0.7}[1]{\textbf{SSLR-SSTV}}\cite{yang2021hyperspectral} & Matrix  & No& Yes & No & $\|\cdot\|_*$& $\|\cdot\|_*$\\
		\textbf{LRTD}~\cite{chen2018destriping} &Tensor &Yes&No&No&$\|\cdot\|_{2,1}$& 3-D HOSVD\\
		\textbf{SNLRSF}~\cite{cao2019hyperspectral}  &Tensor&Yes &No&Yes&$\|\cdot\|_1$& Successive SVD \\
		\textbf{LRTFL0}~\cite{xiong2019hyperspectral} & Tensor &Yes&No&No&$\|\cdot\|_1$& Block Term Decomp.  \\
		\textbf{MLTL2p} & Tensor &Yes&Yes&Yes& $\|\cdot\|_{2,p}$  &Indep. 3-D HOSVD \\\hline
	\end{tabular}
\end{table}}


\subsection{Simulated data experiments}

In this subsection, the proposed method and the competing methods are tested on simulated data generated from the Washington DC Mall\footnote{\href{https://engineering.purdue.edu/~biehl/MultiSpec/hyperspectral.html}{https://engineering.purdue.edu/$\sim$biehl/MultiSpec/hyperspectral.html}}, Xiong-An\footnote{\href{http://www.hrs-cas.com/a/share/shujuchanpin/2019/0501/1049.html}{http://www.hrs-cas.com/a/share/shujuchanpin/2019/0501/1049.html}} and Chikusei\footnote{\href{https://naotoyokoya.com/Download.html}{https://naotoyokoya.com/Download.html}} datasets. Three test images of size $128 \times 128 \times 128$ are randomly obtained from each dataset.

To simulate the noisy HSI data, Gaussian noise, stripes, or dead lines are added to the normalized clean HSI data using the procedure in~\cite{7542167} under the following cases:
\begin{itemize}
   \item \emph{Case 1:} Gaussian noise with a mean of zero and a standard deviation of $0.1$ is added to all the bands.  Additionally, bands $45-60$ and $105-120$ (i.e., $25\%$ of the bands) are selected to impose stripe noise. 
   Stripe noise with an intensity (mean absolute value of pixels) of $0.2$ and a percentage of $10\%$ per band (i.e., $2.5\%$ of the HSI) is added to the selected bands.
   
   \item \emph{Case 2:} Gaussian noise with a mean of zero and a standard deviation of $0.1$ is added to all the bands. Additionally, all the bands are selected to impose stripe noise. Stripe noise with an intensity of $0.2$ and a percentage of $10\%$ per band (i.e., $10\%$ of the HSI) is added to all the bands.

        \item \emph{Case 3:} Gaussian noise with a mean of zero and a standard deviation of $0.1$ is added to all the bands. Additionally,  all the bands are selected to impose dead lines. Dead lines with a percentage of $5\%$ per band (i.e., $5\%$ of the HSI) are added to the selected bands.

     \item \emph{Case 4:}  Gaussian noise with a mean of zero and a standard deviation randomly varying from $0.1$ to $0.2$ is added to all the bands. Additionally, $25\%$ of the bands are selected from band $1-64$ to impose stripe noise with an intensity of $0.2$ and a percentage of $10\%$ per band; $12.5\%$ of the bands are selected from band $65-128$ to impose dead lines with a percentage of $5\%$ per band.
       
\end{itemize}

For comparing the quality of the restored images, four evaluation metrics are employed, which are the mean peak signal-to-noise ratio (MPSNR), the mean structural similarity index (MSSIM), the mean feature similarity index (MFSIM), and the erreur relative globale adimensionnelle de synthese (ERGAS). Let $\mathcal{X}^{*}$ denote the restored HSI and $\hat{\mathcal{X}}$ denote the clean HSI. Then ${\boldsymbol{X}^{*}_{::i}}$ and $\hat{\boldsymbol{X}}_{::i}$ denote the $i$-th band of the restored HSI and clean HSI, respectively. The MPSNR value is defined as
\begin{displaymath}
\mathrm{MPSNR}
=\frac{1}{I_3}\sum^{I_3}_{i=1}10\log_{10}\left(\frac{\max^2(\boldsymbol{X}^{*}_{::i})}{ \mathrm{mse}(\boldsymbol{X}^{*}_{::i},\hat{\boldsymbol{X}}_{::i})}\right),
\end{displaymath}
which is the average PSNR value across the bands. Similarly, MSSIM and MFSIM values are defined as
\begin{displaymath}
\mathrm{MSSIM}=\frac{1}{I_3}\sum^{I_3}_{i=1}\mathrm{SSIM}(\boldsymbol{X}^{*}_{::i},\hat{\boldsymbol{X}}_{::i})\quad \mbox{ and }\quad\mathrm{MFSIM}=\frac{1}{I_3}\sum^{I_3}_{i=1}\mathrm{FSIM}(\boldsymbol{X}^{*}_{::i},\hat{\boldsymbol{X}}_{::i}),
\end{displaymath}
where SSIM is given in  \cite{zhou2004image} and FSIM is given in  \cite{zhang2011fsim}. The ERGAS are defined as
\begin{displaymath}
\mathrm{ERGAS}=100\sqrt{\frac{1}{I_3}\sum^{I_3}_{i=1}\frac{\mathrm{mse}(\boldsymbol{X}^{*}_{::i},\hat{\boldsymbol{X}}_{::i})}{\mathrm{mean}(\boldsymbol{X}^{*}_{::i})}}.
\end{displaymath}
 In addition, better denoising results are indicated by larger MPSNR, MSSIM, and MFISM values, as well as smaller  ERGAS value.

For the proposed MLTL2p method, we set algorithm parameters  $\alpha_{\mathcal{S}}=0.1$, and $\alpha_{X}=\alpha_{\mathcal{G}}=0.01$ for Algorithm~\ref{alg2}. In the first phase, we set model parameters $w_j=10^{-2}$, $p=0.1$, and $\delta_{nl}=0$; we set $\gamma=0.8$ for case 1 and 2 and  $\gamma=1$ for case 3 and 4; 
for $\mathscr{R}_g$, we set $\delta_g=1$, block size $[128,128,128]$ and block rank $[102,102,3]$; for $\mathscr{R}_l$, we set $\delta_l=1$, block size $[32,32,32]$ and block rank $[26,26,2]$. Algorithm~\ref{alg2} stops at iteration $10$. In the second phase, we set model parameters $w_j=10^{-2}$, $p=0.1$;  we set $\gamma=1.76$ for case 1 and 2 and  $\gamma=2.2$ for case 3 and 4; for $\mathscr{R}_g$, we set $\delta_g=3$, block size $[128,128,128]$ and block rank $[102,102,5]$; for $\mathscr{R}_l$, we set $\delta_l=3$, block size $[32,32,32]$ and block rank $[26,26,3]$; for $\mathscr{R}_{nl}$, we set $\delta_{nl}=60/\operatorname{median}(\mathcal{W}_{\mathscr{R}})$, block size $[36,128,128]$ and block rank $[32,43,5]$. Algorithm~\ref{alg2} stops if $\|\mathcal{L}^{k+1}-\mathcal{L}^k\|_F/\|\mathcal{L}^{k+1}\|_F \leq 0.005$ and  $\|\mathcal{S}^{k+1}-\mathcal{S}^k\|_F/\|\mathcal{S}^{k+1}\|_F \leq 0.005$.
For the competing methods, the parameters are manually tuned according to the settings provided in their articles to ensure optimal performance.

\begin{table}[h]
    \centering
    \setlength{\tabcolsep}{4pt}
    \caption{Average numerical results tested on simulated data.}\label{table1}
\begin{tabular}{c|c|ccccccc}
\hline
  \multirow{2}{*}{\scalebox{0.9}[1]{\textbf{Case}}} &   \multirow{2}{*}{\scalebox{0.9}[1]{\textbf{Index}}} & \multirow{2}{*}{ \scalebox{0.9}[1]{\textbf{Noisy}}}   &  \multirow{2}{*}{ \scalebox{0.9}[1]{\textbf{MLTL2p}}}     &   \multirow{2}{*}{\scalebox{0.8}[1]{\textbf{LRTFL0}}}      &   \multirow{2}{*}{\scalebox{0.8}[1]{\textbf{SNLRSF}}}    &   \multirow{2}{*}{\scalebox{0.8}[1]{\textbf{LRTD}}}    &  \scalebox{0.8}[1]{\textbf{FGLR}}   &   \multirow{2}{*}{\scalebox{0.8}[1]{\textbf{QRNN3D}}}\\ 
 &   &  &   &   &   &   &  \scalebox{0.8}[1]{\textbf{+BM4D}}   &  \\ \hline 
\multirow{4}{*}{1} & MPSNR        & 19.63      & \textbf{39.35} & 37.02  & 39.10  & 30.97  & 33.65  & 31.56         \\
                   & MSSIM        & 0.276      & \textbf{0.969} & 0.940  & 0.965  & 0.807  & 0.900 & 0.856         \\
                   & MFSIM        & 0.613      & \textbf{0.981} & 0.968  & 0.978  & 0.907  & 0.946  & 0.926         \\
                   & ERGAS        & 0.426      & \textbf{0.041} & 0.057  & 0.043  & 0.114  & 0.111  & 0.110         \\ \hline
\multirow{4}{*}{2} & MPSNR        & 18.52      & \textbf{39.03} & 35.99  & 34.97  & 30.64  & 32.77  & 31.05         \\
                   & MSSIM        & 0.245      & \textbf{0.967} & 0.929  & 0.915  & 0.797  & 0.887  & 0.840         \\
                   & MFSIM        & 0.580      & \textbf{0.980} & 0.962  & 0.953  & 0.902  & 0.940  & 0.918         \\
                   & ERGAS        & 0.470      & \textbf{0.043} & 0.064  & 0.072  & 0.120  & 0.117  & 0.118         \\ \hline
\multirow{4}{*}{3} & MPSNR        & 14.46      & \textbf{35.51} & 34.91  & 26.24  & 29.32  & 32.07  & 30.88         \\
                   & MSSIM        & 0.234      & \textbf{0.929} & 0.920  & 0.704  & 0.782  & 0.874  & 0.829         \\
                   & MFSIM        & 0.572      & \textbf{0.967} & 0.957  & 0.856  & 0.903  & 0.934  & 0.916         \\
                   & ERGAS        & 0.600      & \textbf{0.074} & 0.086  & 0.170  & 0.133  & 0.125  & 0.119         \\ \hline
\multirow{4}{*}{4} & MPSNR        & 16.10      & \textbf{36.49} & 34.64  & 35.33  & 26.47  & 29.12  & 30.35         \\
                   & MSSIM        & 0.176      & \textbf{0.945} & 0.909  & 0.925  & 0.579  & 0.793  & 0.819         \\
                   & MFSIM        & 0.513      & \textbf{0.966} & 0.949  & 0.955  & 0.804  & 0.900  & 0.906         \\
                   & ERGAS        & 0.593      & \textbf{0.061} & 0.099  & 0.078  & 0.208  & 0.146  & 0.125         \\ \hline
& Mtime & $-$ & 299.0         & 265.9 & 302.2 & 207.7 & 107.5 & \textbf{0.6} \\ \hline
\end{tabular}
\end{table}
The average numerical results for each case over all the simulated HSIs are presented in Table~\ref{table1}. The results in bold font indicate the best performance of the evaluation metric in the current case. It can be observed that the proposed MLTL2p method outperforms other methods in terms of all the evaluation metrics. For example, in case 2,  the MPSNR value of the HSI restored by the MLTL2p method is 3~dB larger than the MPSNR value of the second best method, that is, the LRTFL0 method.

For visual quality comparison, the HSIs restored by different methods are presented in Fig. \ref{fig:Noisy_Chikusei_CASE_1}-\ref{fig:Noisy_XiongAn_CASE_4}, with a subregion marked by a white box and enlarged in a white box. The MLTL2p method achieves state-of-the-art performance for HSI denoising and destriping, while the competing methods seem to fail to remove noise or restore HSIs with high quality. For example, for removing mixed noise, the BM4D method fails to remove stripes even if the HSI only contains few stripes as shown in Fig.~\ref{fig:Noisy_Chikusei_CASE_1}(\subref{subfig:BM4D});  the SNLRSF and QRNN3D methods may not remove the stripes and dead lines, when the noise percentage is high, for example, in Fig.~\ref{fig:Noisy_DC_CASE_3}(\subref{subfig:SNLRSF1})(\subref{subfig:QRNN3D1}). Also, regarding the quality of the restoration, the HSIs restored by the LRTFL0 method may contain staircase artifacts, e.g., in Fig. \ref{fig:Noisy_Chikusei_CASE_1}(\subref{subfig:LRTFL01}); the HSIs restored by the LRTD method contain blur and artifacts, e.g., in Fig.~\ref{fig:Noisy_Chikusei_CASE_1}(\subref{subfig:LRTD}); the HSIs restored by the QRNN3D method look noisy, e.g., in Fig.~\ref{fig:Noisy_Chikusei_CASE_1}(\subref{subfig:QRNN3D1}).
\begin{figure}[!h]
	\centering
	\begin{subfigure}[h]{0.242\linewidth}
    \includegraphics[width=\linewidth]{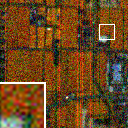}
    \caption{Noisy image}
    \end{subfigure}	
    \begin{subfigure}[h]{0.242\linewidth}
    \includegraphics[width=\linewidth]{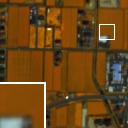}
    \caption{Ground truth}
    \end{subfigure}
    \begin{subfigure}[h]{0.242\linewidth}
    \includegraphics[width=\linewidth]{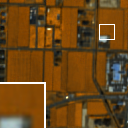}
    \caption{MLTL2p (ours)}
    \end{subfigure}
     \begin{subfigure}[h]{0.242\linewidth}
    \includegraphics[width=\linewidth]{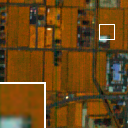}
    \caption{LRTFL0}\label{subfig:LRTFL01}
    \end{subfigure}
    \begin{subfigure}[h]{0.242\linewidth}
    \includegraphics[width=\linewidth]{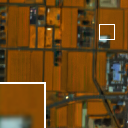}
	\caption{SNLRSF}\label{subfig:SNLRSF1}
    \end{subfigure}
    \begin{subfigure}[h]{0.242\linewidth}
    \includegraphics[width=\linewidth]{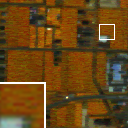}
	\caption{LRTD}\label{subfig:LRTD}
    \end{subfigure}
    \begin{subfigure}[h]{0.242\linewidth}
    \includegraphics[width=\linewidth]{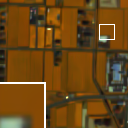}
	\caption{FGLR+BM4D}
  \end{subfigure}
    \begin{subfigure}[h]{0.242\linewidth}
    \includegraphics[width=\linewidth]{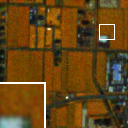}
    \caption{QRNN3D}\label{subfig:QRNN3D1}
    \end{subfigure}
	\caption{Comparison of HSIs (R:112, G:70, B:20) restored by different methods from Chikusei in case 1. The PSNR value for each restored HSI: (a) Noisy image (19.63~dB); (c) MLTL2p (ours) (39.57~dB); (d) LRTFL0 (37.05~dB); (e) SNLRSF (38.70~dB); (f) LRTD (31.82~dB); (g) FGLR+BM4D (33.97~dB);  (h) QRNN3D (32.20~dB).}\label{fig:Noisy_Chikusei_CASE_1} 
\end{figure}
\begin{figure}[!h]
	\centering
	\begin{subfigure}[h]{.242\linewidth}
    \includegraphics[width=\linewidth]{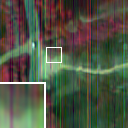}
    \caption{Noisy image}
    \end{subfigure}	
    \begin{subfigure}[h]{.242\linewidth}
    \includegraphics[width=\linewidth]{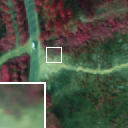}
    \caption{Ground truth}
    \end{subfigure}
    \begin{subfigure}[h]{.242\linewidth}
    \includegraphics[width=\linewidth]{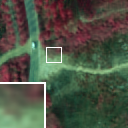}
    \caption{MLTL2p (ours)}
    \end{subfigure}
    \begin{subfigure}[h]{.242\linewidth}
    \includegraphics[width=\linewidth]{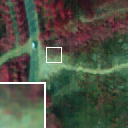}
    \caption{LRTFL0}
    \end{subfigure}
    \begin{subfigure}[h]{.242\linewidth}
    \includegraphics[width=\linewidth]{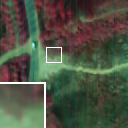}
	\caption{SNLRSF}
    \end{subfigure}
    \begin{subfigure}[h]{.242\linewidth}
    \includegraphics[width=\linewidth]{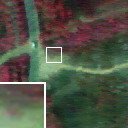}
	\caption{LRTD}
    \end{subfigure}
    \begin{subfigure}[h]{.242\linewidth}
    \includegraphics[width=\linewidth]{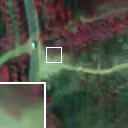}
	\caption{FGLR+BM4D}\label{subfig:BM4D}
  \end{subfigure}
    \begin{subfigure}[h]{.242\linewidth}
    \includegraphics[width=\linewidth]{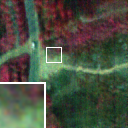}
    \caption{QRNN3D}
    \end{subfigure}
	\caption{Comparison of HSIs (R:118, G:78, B:38) restored by different methods from Xiong-An in case 2. The PSNR value for each restored HSI: (a) Noisy image (18.52~dB); (c) MLTL2p (ours) (38.82~dB); (d) LRTFL0 (35.74~dB); (e) SNLRSF (34.76~dB); (f) LRTD (31.90~dB); (g) FGLR+BM4D (34.95~dB);  (h) QRNN3D (31.01~dB).}\label{fig:Noisy_Xiong-An_CASE_2} 
\end{figure}
\begin{figure}[!h]
	\centering
	\begin{subfigure}[h]{.242\linewidth}
    \includegraphics[width=\linewidth]{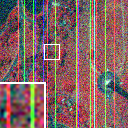}
    \caption{Noisy image}
    \end{subfigure}	
    \begin{subfigure}[h]{.242\linewidth}
    \includegraphics[width=\linewidth]{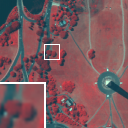}
    \caption{Ground truth}
    \end{subfigure}
    \begin{subfigure}[h]{.242\linewidth}
    \includegraphics[width=\linewidth]{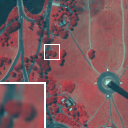}
    \caption{MLTL2p (ours)}
    \end{subfigure}
    \begin{subfigure}[h]{.242\linewidth}
    \includegraphics[width=\linewidth]{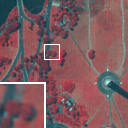}
    \caption{LRTFL0}
    \end{subfigure}
    \begin{subfigure}[h]{.242\linewidth}
    \includegraphics[width=\linewidth]{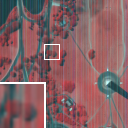}
	\caption{SNLRSF}\label{subfig:SNLRSF2}
    \end{subfigure}
    \begin{subfigure}[h]{.242\linewidth}
    \includegraphics[width=\linewidth]{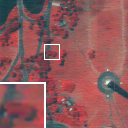}
	\caption{LRTD}
    \end{subfigure}
    \begin{subfigure}[h]{.242\linewidth}
    \includegraphics[width=\linewidth]{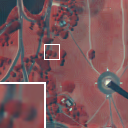}
	\caption{FGLR+BM4D}
  \end{subfigure}
    \begin{subfigure}[h]{.242\linewidth}
    \includegraphics[width=\linewidth]{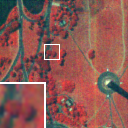}
    \caption{QRNN3D}
    \end{subfigure}
	\caption{Comparison of HSIs (R:60, G:27, B:17) restored by different methods from Washington DC Mall in case 3. The PSNR value for each restored HSI: (a) Noisy image (14.66~dB); (c) MLTL2p (ours) (35.28~dB); (d) LRTFL0 (34.62~dB); (e) SNLRSF (26.65~dB); (f) LRTD (28.52~dB); (g) FGLR+BM4D (30.69~dB); (h) QRNN3D (30.01~dB).}\label{fig:Noisy_DC_CASE_3}
\end{figure}
\begin{figure}[!h]
	\centering
	\begin{subfigure}[h]{.242\linewidth}
	\includegraphics[width=\linewidth]{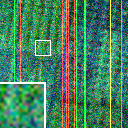}
	\caption{Noisy image}
\end{subfigure}	
\begin{subfigure}[h]{.242\linewidth}
	\includegraphics[width=\linewidth]{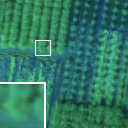}
	\caption{Ground truth}
\end{subfigure}
\begin{subfigure}[h]{.242\linewidth}
	\includegraphics[width=\linewidth]{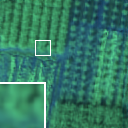}
	\caption{MLTL2p (ours)}
\end{subfigure}
\begin{subfigure}[h]{.242\linewidth}
	\includegraphics[width=\linewidth]{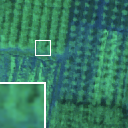}
	\caption{LRTFL0}\label{subfig:LRTFL02}
\end{subfigure}
\begin{subfigure}[h]{.242\linewidth}
	\includegraphics[width=\linewidth]{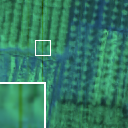}
	\caption{SNLRSF}
\end{subfigure}
\begin{subfigure}[h]{.242\linewidth}
	\includegraphics[width=\linewidth]{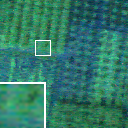}
	\caption{LRTD}
\end{subfigure}
\begin{subfigure}[h]{.242\linewidth}
	\includegraphics[width=\linewidth]{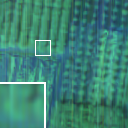}
	\caption{FGLR+BM4D}
\end{subfigure}
\begin{subfigure}[h]{.242\linewidth}
	\includegraphics[width=\linewidth]{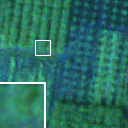}
	\caption{QRNN3D}\label{subfig:QRNN3D2}
\end{subfigure}
	\caption{Comparison of HSIs (R:107, G:73, B:21) restored by different methods from Xiong-An  in case 4. The PSNR value for each restored HSI: (a) Noisy image (16.02~dB); (c) MLTL2p (ours) (36.66~dB); (d) LRTFL0 (34.95~dB); (e) SNLRSF (35.46~dB); (f) LRTD (25.91~dB); (g) FGLR+BM4D (30.11~dB); (h) QRNN3D (29.54~dB).}\label{fig:Noisy_XiongAn_CASE_4}
\end{figure}

\newpage\subsection{Real data experiments} In this subsection, we test the proposed method and the competing methods on two real HSI datasets containing mixed noise. The test images are subimages of size $128 \times 128 \times 128$
randomly obtained from the HYDICE Urban\footnote{\href{ http://www.erdc.usace.army.mil/Media/Fact-Sheets/Fact-Sheet-Article-View/Article/610433/hypercube/}{http://www.erdc.usace.army.mil/Media/Fact-Sheets/Fact-Sheet-Article-View/Article/610433/hypercube/}}~$(307\times307\times 210)$ and EO-1 Hyperion\footnote{\href{http://www.lmars.whu.edu.cn/prof_web/zhanghongyan/resource/noise_EOI.zip}{http://www.lmars.whu.edu.cn/prof\_web/zhanghongyan/resource/noise\_EOI.zip}}~$(400 \times 200 \times 166)$. A selected band of the HSI restored by each method is presented in Fig.~\ref{fig:URBAN} and Fig.~\ref{fig:EO-1 Hyperion Dataset} for HYDICE Urban dataset and EO-1 Hyperion dataset, respectively.
It can be observed that the proposed MLTL2p method can remove the stripes while preserving the image details. However,  the
LRTFL0, LRTD, BM4D and QRNN3D methods are unable to eliminate the stripes when the band is contaminated by heavy mixed noise   as shown in Fig.~\ref{fig:URBAN}; and the SNLRSF, LRTD, BM4D and QRNN3D methods remove not only the noise but also some structural details of the HSI as shown in Fig.~\ref{fig:EO-1 Hyperion Dataset}.

\begin{figure}[h]
	\centering
	\begin{subfigure}[t]{.242\linewidth}
\includegraphics[width=\linewidth]{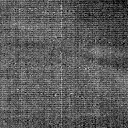}
    \caption{Noisy image}
    \end{subfigure}	
    \begin{subfigure}[t]{.242\linewidth}
     \includegraphics[width=\linewidth]{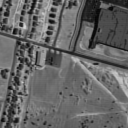}
    \caption{MLTL2p (ours)}
    \end{subfigure}
    \begin{subfigure}[t]{.242\linewidth}
    \includegraphics[width=\linewidth]{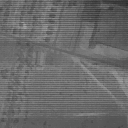}
    \caption{LRTFL0}
    \end{subfigure}
    \begin{subfigure}[t]{.242\linewidth}
    \includegraphics[width=\linewidth]{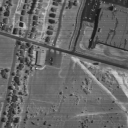}
	\caption{SNLRSF}
    \end{subfigure}
    \begin{subfigure}[t]{.242\linewidth}
    \includegraphics[width=\linewidth]{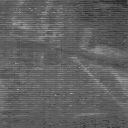}
	\caption{LRTD}
    \end{subfigure}
    \begin{subfigure}[t]{.242\linewidth}
    \includegraphics[width=\linewidth]{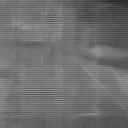}
	\caption{FGLR+BM4D}
  \end{subfigure}
    \begin{subfigure}[t]{.242\linewidth}
    \includegraphics[width=\linewidth]{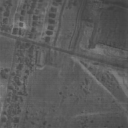}
    \caption{QRNN3D}
    \end{subfigure}
	\caption{Comparison of the 25-th band of the HSI restored by different methods from HYDICE Urban.}\label{fig:URBAN}
\end{figure}
\begin{figure}[htbp]
	\centering
	\begin{subfigure}[t]{.242\linewidth}
\includegraphics[width=\linewidth]{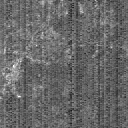}
    \caption{Noisy image}
    \end{subfigure}	
    \begin{subfigure}[t]{.242\linewidth}
    \includegraphics[width=\linewidth]{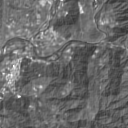}
    \caption{MLTL2p (ours)}
    \end{subfigure}
    \begin{subfigure}[t]{.242\linewidth}
	\includegraphics[width=\linewidth]{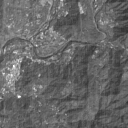}
	\caption{LRTFL0}
	\end{subfigure}
    \begin{subfigure}[t]{.242\linewidth}
    \includegraphics[width=\linewidth]{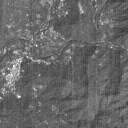}
	\caption{SNLRSF}
    \end{subfigure}
    \begin{subfigure}[t]{.242\linewidth}
    \includegraphics[width=\linewidth]{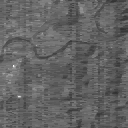}
	\caption{LRTD}
    \end{subfigure}
    \begin{subfigure}[t]{.242\linewidth}
    \includegraphics[width=\linewidth]{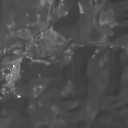}
	\caption{FGLR+BM4D}
  \end{subfigure}
    \begin{subfigure}[t]{.242\linewidth}
    \includegraphics[width=\linewidth]{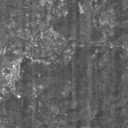}
    \caption{QRNN3D}
    \end{subfigure}
      	
	\caption{Comparison of the 128-th band of the HSI restored by different methods from EO-1 Hyperion.}\label{fig:EO-1 Hyperion Dataset}
\end{figure}

\subsection{Numerical Convergence Analysis}

We conduct numerical convergence analysis on the MLTL2p method in the second phase. In particular, we analyze the convergence of the sequence $\{(\mathcal{S}^k,[\boldsymbol{X}_i^k]_g,[\boldsymbol{X}_i^k]_l,[\boldsymbol{X}_{i}^k]_{nl},[\mathcal{G}^k]_g,[\mathcal{G}^k]_l,[\mathcal{G}^k]_{nl},\mathcal{L}^k)\}$ generated by the P-BCD algorithm in Algorithm~\ref{alg2} for model~\eqref{model:HSI_new}.
	
First, we analyze the convergence of $\mathcal{L}^k$ and three terms $\mathcal{L}_{g}^k$, $\mathcal{L}_{l}^k$ and $\mathcal{L}_{nl}^k$, associated with $[\boldsymbol{X}_i^k]_g,[\boldsymbol{X}_i^k]_l,[\boldsymbol{X}_{i}^k]_{nl},[\mathcal{G}^k]_g,[\mathcal{G}^k]_l$, and $[\mathcal{G}^k]_{nl}$. The terms $\mathcal{L}_{g}^k$, $\mathcal{L}_{l}^k$ and $\mathcal{L}_{nl}^k$ are defined as follows
\begin{align*}
	\mathcal{L}_{g}^k=&\mathcal{W}_g^{-1}\mathscr{R}_g^{\top}([\mathcal{Y}^{k}]_g),\\
	\mathcal{L}_{l}^k=&\mathcal{W}_l^{-1}\mathscr{R}_l^{\top}([\mathcal{Y}^{k}]_l),\\
	\mathcal{L}_{nl}^k=&\mathcal{W}_{nl}^{-1}\mathscr{R}_{nl}^{\top}([\mathcal{Y}^{k}]_l),
\end{align*}
where
	\begin{align*}
	&[\mathcal{Y}^{k}]_g= [\mathcal{G}^k]_g\times
	_{1}[\boldsymbol{X}_1^k]_g\times_{2}[\boldsymbol{X}_2^k]_g\times_{3}[\boldsymbol{X}_3^k]_g,\\
	&[\mathcal{Y}^{k}]_l= [\mathcal{G}^k]_l\times
	_{1}[\boldsymbol{X}_1^k]_l\times_{2}[\boldsymbol{X}_2^k]_l\times_{3}[\boldsymbol{X}_3^k]_l,\\
	&[\mathcal{Y}^{k}]_{nl}= [\mathcal{G}^k]_{nl}\times
	_{1}[\boldsymbol{X}_1^k]_{nl}\times_{2}[\boldsymbol{X}_2^k]_{nl}\times_{3}[\boldsymbol{X}_3^k]_{nl}.
\end{align*}
Note that $\mathcal{L}_{g}^k$, $\mathcal{L}_{l}^k$ and $\mathcal{L}_{nl}^k$ represent the restored images obtained from global, local, nonlocal low-rank regularization, respectively, while $\mathcal{L}^k$ represents the overall restored image, according to line 9 in Algorithm~\ref{alg2}. To analyze the numerical convergence, we compute the relative errors of $\mathcal{L}^k$, $\mathcal{L}_{g}^k$, $\mathcal{L}_{l}^k$, and $\mathcal{L}_{nl}^k$, e.g., $$\frac{\|\mathcal{L}^k-\mathcal{L}^{k-1}\|_F}{\|\mathcal{L}^k\|_F}$$ for $\mathcal{L}^k$. In Fig.~\ref{fig:conv}(\subref{fig:conv_L}), we plot the relative errors vs the number of iterations and observe that these restored images exhibit similar convergence behaviors.

Second, we  analyze the convergence of $\mathcal{S}^k$ using both the relative error and the group sparsity rate.  In Fig.~\ref{fig:conv}(\subref{fig:conv_S}), we plot the relative errors vs the number of iterations on the left axis, and the group sparsity rate, defined as the proportion of nonzero mode-$1$ fibers of $\mathcal{S}^k$, on the right axis. We observe that  $\mathcal{S}^k$ converges stably in terms of both relative error and group sparsity rate.

\begin{figure}[h]
	\centering
	\begin{subfigure}[t]{.48\linewidth}
		\includegraphics[width=\linewidth]{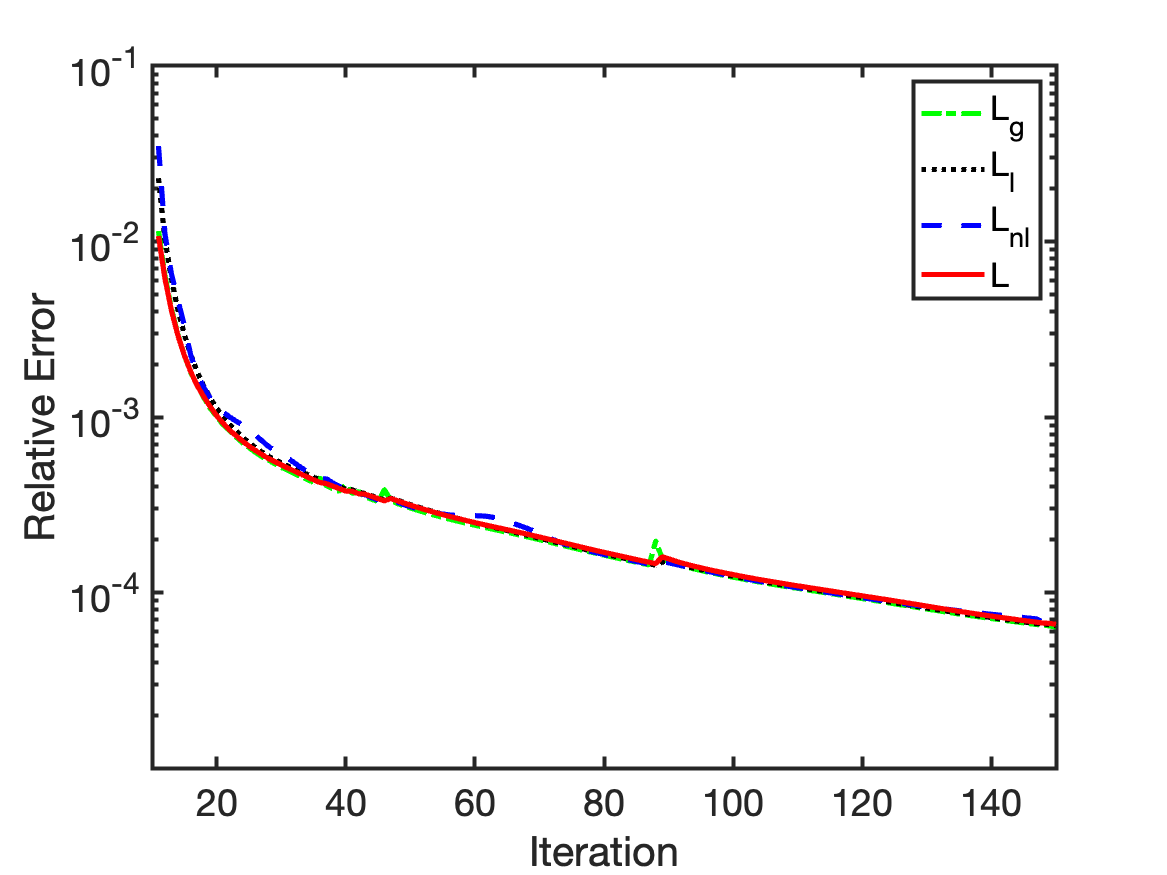}
		\caption{$\mathcal{L}_{g}$, $\mathcal{L}_{l}$, $\mathcal{L}_{nl}$, and $\mathcal{L}$}\label{fig:conv_L}
	\end{subfigure}
	\begin{subfigure}[t]{.48\linewidth}
		\includegraphics[width=\linewidth]{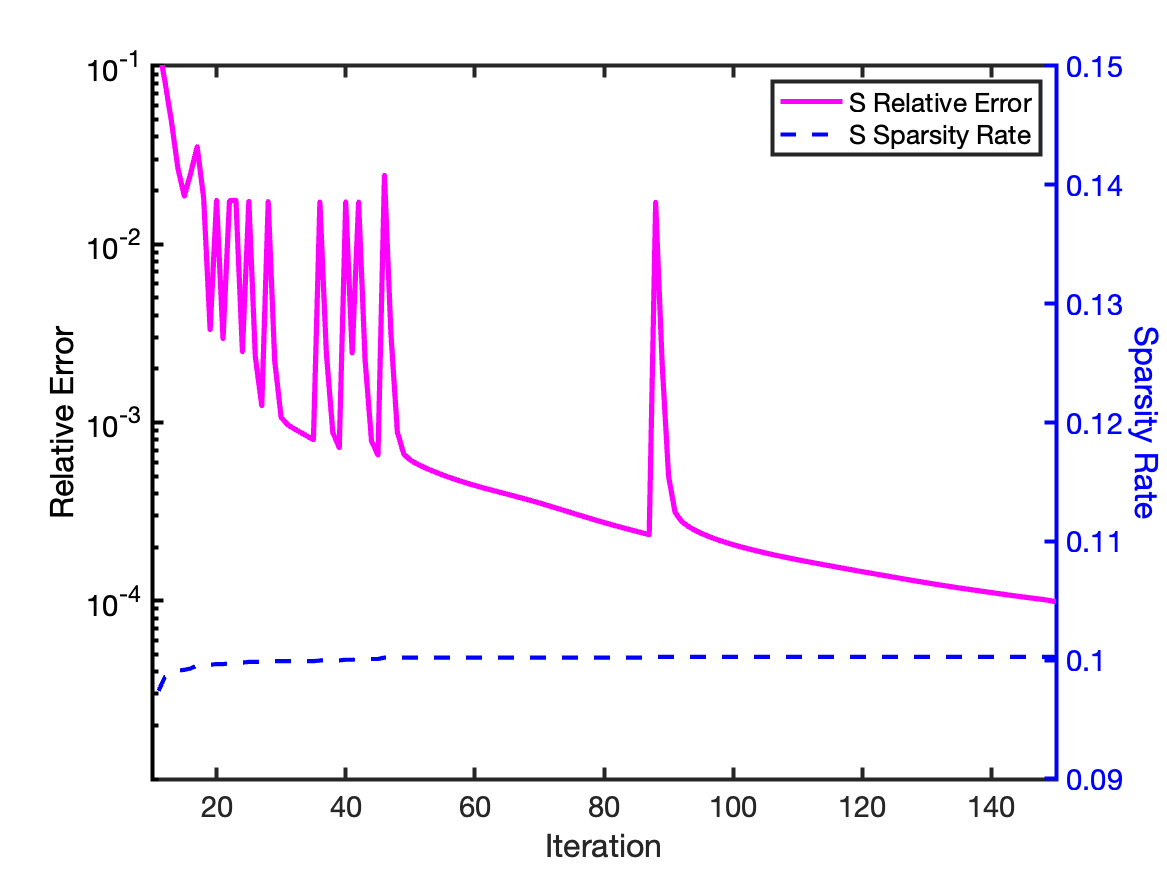}
		\caption{$\mathcal{S}$}\label{fig:conv_S}
	\end{subfigure}
	\caption{Numerical convergence analysis.}\label{fig:conv}
\end{figure}

\subsection{Ablation Experiments}

We conduct ablation experiments to demonstrate the advantages of our proposed two-phase MLTL2p method, which leverages multi-scale low-rankness, over the methods that utilize only one or two scales of low-rankness. We use Case 2 of Xiong-An for testing.  Let G denote global low-rankness, L denote local block low-rankness, and NL denote nonlocal self-similar tensor low-rankness. We compare our two-phase MLTL2p method, also referred to as the two-phase G+L+NL method, with five other methods: single-phase G, L, and G+L methods, as well as two-phase G+NL and L+NL methods.   In Fig.~\ref{fig:ablation}, we plot the PSNR values of the restored images obtained by each method vs the number of iterations. The result shows that two-phase methods outperform single-phase methods, and the multi-scale method outperforms those using only one or two scales.
\begin{figure}[h]
	\centering
	\includegraphics[width=.62\linewidth]{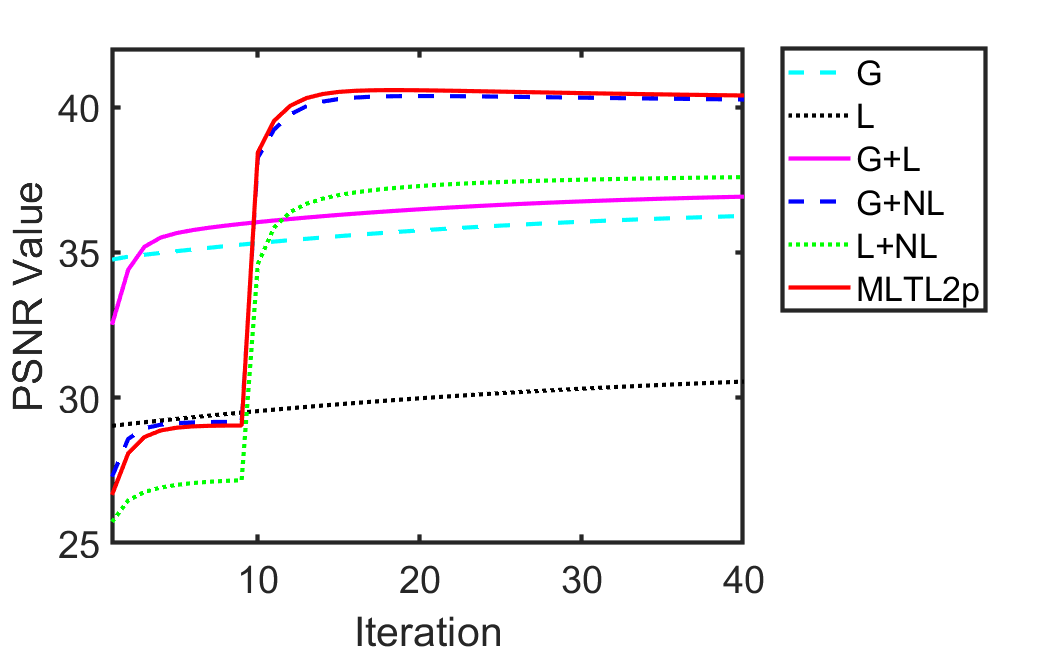}
	\caption{Ablation experiments.}\label{fig:ablation}
\end{figure}

\subsection{Parameter Analysis}

We conduct a parameter analysis on several model parameters. We use Case 3 of Washington DC Mall for testing the MLTL2p method in the second phase. First, we test parameters $\gamma$ and $p$ associated with the variable $\mathcal{S}$.  The plot of PSNR values vs parameters  $\gamma$ and $p$ is presented in Fig.~\ref{fig:parameter}(\subref{fig:para_S}). The PSNR value achieves the best performance when $\gamma\in [2.2,3.5]$ and $p\in[0.1,0.2]$. Hence, we set $\gamma=2.2$ and $p=0.1$.

Second, we test parameters $w_j$ and $n_3$ associated with the variable $[\mathcal{G}]_g,[\mathcal{G}]_l$, and $[\mathcal{G}]_{nl}$. Note that in the second phase of the proposed method we set $w_j=10^{-2}$ and $n_3=5, 3,$ and $5$, respectively, for $[\mathcal{G}]_g,[\mathcal{G}]_l$, and $[\mathcal{G}]_{nl}$. Fig.~\ref{fig:parameter}(\subref{fig:para_G}) represents the plot of PSNR values vs parameters $w_j$ and $n_3$ offset, where $n_3$ offset is the deviation from the selected $n_3$ values. The PSNR values remain high when the $n_3$ offset is close to zero and  $w_j\geq 10^{-2}$.
\begin{figure}[h]
	\centering
	\begin{subfigure}[t]{.44\linewidth}
		\includegraphics[height=0.75\linewidth]{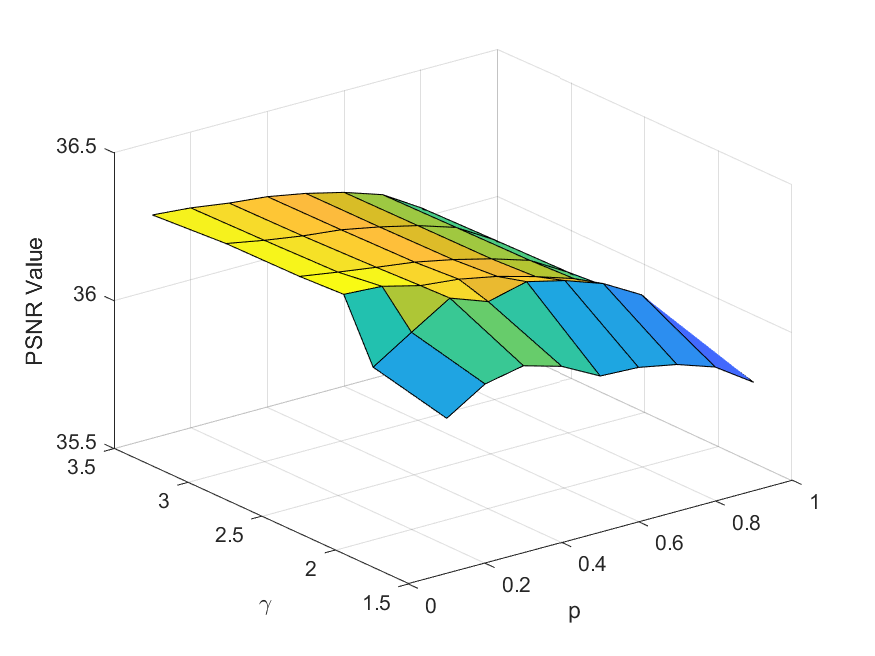}
		\caption{Parameters  $\gamma$ and $p$}\label{fig:para_S}
	\end{subfigure}
	\begin{subfigure}[t]{.44\linewidth}
		\includegraphics[height=0.75\linewidth]{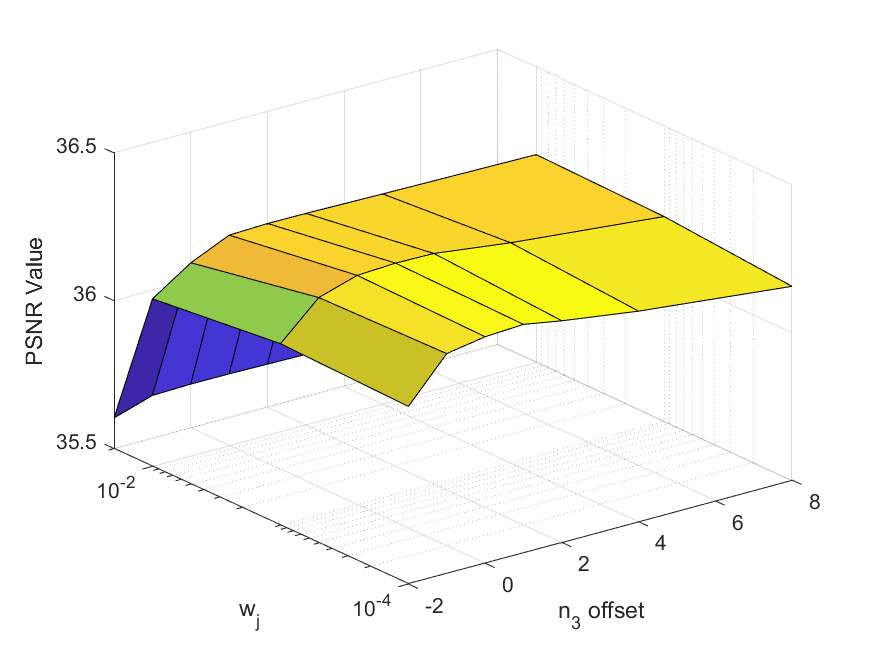}
		\caption{Parameters $w_j$ and $n_3$ offset}\label{fig:para_G}
	\end{subfigure}
	\begin{subfigure}[t]{.1\linewidth}
		\includegraphics[height=3.3\linewidth]{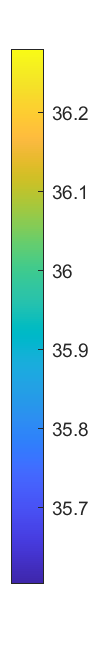}
	\end{subfigure}
	\caption{Parameter analysis.}\label{fig:parameter}
\end{figure}


\section{Conclusions}\label{sec:conclusions}
In this paper, we propose an HSI denoising and destriping method, which has an optimization model given in~\eqref{model:HSI} and an iterative algorithm described in Algorithm~\ref{alg1}. The optimization model consists of a data fidelity term, a sparsity-enhanced nonlocal low-rank tensor regularization term for denoising, and a $\ell_{2,p}$ norm for destriping. The iterative algorithm is proposed using a proximal version of BCD algorithms, which has convergence guarantees. The proposed model can be extended to a multi-scale low-rank regularized model and the numerical experiments tested on simulated and real HSIs also show the effectiveness of our proposed method in removing Gaussian noise, stripes, and deadlines.

\section*{Acknowledgments}
The authors would like to thank the anonymous reviewers and the associate editor for their valuable comments and suggestions, which led to significant improvements in this article.


\bibliographystyle{siam}
\bibliography{bibfile}

\end{document}